\documentclass[twoside,10pt]{article}
\usepackage[utf8]{inputenc}
\usepackage[T1]{fontenc}
\usepackage{times}
\usepackage{amsmath,amsfonts,amssymb,amsthm,euscript,pifont}
\usepackage{mathrsfs}
\usepackage{color}
\usepackage[colorlinks=true]{hyperref}
\hypersetup{
    linkcolor = {blue},
    citecolor = {blue}
    }
\usepackage{xspace}

\usepackage{soul}
\usepackage{fullpage}
\usepackage{bbm}
\usepackage{authblk}
\usepackage[colorlinks=true]{hyperref}

\usepackage{dsfont}
\newcommand{\ind}{\mathds{1}}

\newcommand{\trace}{\textrm{Trace}}

\newtheorem{theorem}{Theorem}[section]
\newtheorem{lemma}[theorem]{Lemma}
\newtheorem{proposition}[theorem]{Proposition}
\newtheorem{corollary}[theorem]{Corollary}

\newtheorem{hypothesis}[theorem]{Hypothesis}

\newcommand{\er}{\mathbb{R}}

\newcommand{\Ee}{\displaystyle {\mathbb E}}

\newcommand{\Pp}{{\mathbb P}}

\newcommand{\FF}{{\mathcal F}}

\usepackage{upgreek}

\newcommand{\law}{\textit{{Law}}}
\newcommand{\loc}{\textit{loc}}

\newcommand{\LH}{L^2((0,T);H^1(\er^d))}

\newcommand\1{\mathds{1}}

\newcommand{\Bb}{\mathcal{B}}
\newcommand{\Cc}{\mathcal C}
\newcommand{\Dd}{\mathcal D}
\newcommand{\Ff}{\mathcal F}

\newcommand{\Ii}{{\mathcal I}}

\newcommand{\Ll}{\mathcal L}

\newcommand{\Nn}{{\mathcal N}}

\newcommand{\Ww}{\mathcal W}
\newcommand{\Xx}{\mathcal X}

\newcommand{\EE}{\mathbb E}
\newcommand{\EEE}{\mathcal E}

\newcommand{\PP}{\mathbb P}
\newcommand{\QQ}{\mathbb Q}

\newcommand{\Ep}{\mathbb{E}_{\mathbb P}}
\newcommand{\Eq}{\mathbb{E}_{\mathbb Q}}
\newcommand{\Ez}{\mathbb{E}_{\widetilde{\mathbb Q}}}

\newcommand{\Bmod}{\widehat{B}}


\newcommand{\hypmodo}{{\textit(${\textit A}_0$})\xspace}
\newcommand{\hypmodi}{{\textit(${\textit A}_1$})\xspace}
\newcommand{\hypmodii}{{\textit(${\textit A}_2$})\xspace}
\newcommand{\hypmodiiweaken}{{\textit(${\textit A}_2$-weaken{ed}})\xspace}

\newcommand{\hypcondo}{{\bf(H0)}\xspace}
\newcommand{\hypcondi}{{\bf(H1)}\xspace}
\newcommand{\hypcondii}{{\bf(H2)}\xspace}
\newcommand{\hypcondiii}{{\bf(H3)}\xspace}
\newcommand{\hypcondquatre}{{\bf(H4)}\xspace}
\newcommand{\hypcondcinq}{{\bf(H5)}\xspace}
\newcommand{\hypcondsix}{{\bf(H6)}\xspace}

\numberwithin{equation}{section}

\title{On the wellposedness of some McKean models with moderated or singular diffusion coefficient}
\author[1]{Mireille Bossy}
\author[2]{Jean-Fran\c{c}ois Jabir\footnote{The second author has been supported by the Russian Academic Excellence Project "$5$-$100$".}}
\affil[1]{Universit{\'e} C{\^o}te d’Azur, Inria, France}
\affil[2]{Department of Statistics and Data Analysis, Higher School of Economics, Moscow, Russia}

\date{\today}

\begin{document}
\maketitle
\begin{abstract}
We investigate the well-posedness problem related to two models of nonlinear McKean Stochastic Differential Equations with some local interaction in the diffusion term. First,  we revisit the case of the McKean-Vlasov dynamics with moderate interaction, previously studied by M{\'e}l{\'e}ard and Jourdain in \cite{JouMel-98}, under slightly weaker assumptions, by showing the existence and uniqueness of a weak solution using a Sobolev regularity framework instead of {a} H{\"o}lder one.
Second, we study the construction of a Lagrangian Stochastic model {endowed with a} conditional McKean diffusion term  in the velocity dynamics and a {nondegenerate} diffusion term  {in the position dynamics.}
\end{abstract}

\paragraph{Key words:} Weak-strong wellposedness problems; McKean-Vlasov models; Singular McKean diffusions.

\section{Introduction}

In this paper, we are  interested  in the wellposedness problem  of some singular nonlinear McKean SDEs in the McKean-Vlasov sense in $\er^d$,  in the particular situation where the diffusion term carries the singular McKean nonlinear dependency. General form of nonlinear McKean SDEs {is given by}
\begin{align}\label{eq:prototype}
d X_t &= b(X_t, \law(X_t)) dt + \sigma(X_t,\law(X_t)) dW_t, \quad \mu_0 = \law(X_0),
\end{align}
where $(W_t;\,t\geq 0)$ is a $\er^d$-valued standard Brownian motion, independent of $X_0${,} and $\mu_0$ is a given probability measure on $\er^d$.

The class of singular McKean models we want to consider here are  models for which the corresponding interacting particle system approximation (at least given formally, when the mean field limit is not yet established) {gives rise to} some singularity in the  kernel function. In the context of the model \eqref{eq:prototype}, this means that for any $N\geq 2$, for any family of mollifiers $g_{\varepsilon}:=\varepsilon^{-d}g(\frac{x}{\varepsilon})$ with parameter $\varepsilon>0$,  the mapping
\begin{align*}
x=(x^0,x^1,\ldots,x^N) \in \er^{(N+1)\times d}\mapsto b \text{ or } \sigma\Big(x^0, \Big(g_\varepsilon * \mu_x^N\Big)\Big) \in \er^d, \text{ with  } \mu_x^N=\frac{1}{N}\sum_{i=1}^N \delta_{x^i}
\end{align*}
attains its maximum {norm} on the subset  $\bigcup_{i=1}^N\{x^0=x^i\}$ which tends  to $+\infty$ as $\varepsilon$ tends to zero. Well known examples of McKean SDEs with  singular drift kernel $b$ are those of the probabilistic interpretation of Burgers' equation (Sznitman \cite{Sznitman-86}), the stochastic vortex method model for fluid flow  (see e.g. Chorin \cite{chorin-73}, M{\'e}l{\'e}ard \cite{Meleard-01} among others), or more recently the probabilistic interpretation of the Keller Segel equation for chemotaxis modeling (see e.g. Fournier and Jourdain \cite{FouJou-17}).

More precisely, we are interested  in the wellposedness of the following coupled processes $(X_t,Y_t;\,t\geq 0)$ on a probability space $(\Omega, \FF, \Pp)$ satisfying the conditional McKean SDE
\begin{equation}\label{eq:NonlinearLangevin}
\left\{
\begin{aligned}
X_t & = X_0 + \int_0^t b(X_s, Y_s) ds + \int_0^t \sigma(X_s) dB_s, \\
Y_t & = Y_0 + \int_0^t \Ep[ \ell(Y_s) | X_s]  ds + \int_0^t \Ep[\gamma(Y_s) | X_s] dW_s.
\end{aligned}
\right.
\end{equation}
The initial condition $(X_0,Y_0)$ is distributed according to a given initial law $\mu_0$, $(W_t;\,t\geq 0)$ and $(B_t;\,t\geq 0)$ are two independent $\er^d$ standard Brownian motions,
$x \mapsto\sigma(x)$  is  a $\er^d\times\er^d$ valued function. Before briefly describing our hypotheses on the coefficients of \eqref{eq:NonlinearLangevin_intro}, let us make some comments on such models.

Our particular interest for the study of singular dynamics as \eqref{eq:NonlinearLangevin} is motivated by the wellposedness problem related to the class of Lagrangian stochastic models for turbulent flows. This class of models have been introduced in the general framework of the statistical description of fluid motions and
aimed to describe the main characteristic properties (position, velocity, ...) of a generic particle of a particular fluid flow. From the turbulent modeling viewpoint, such SDEs are known as Lagrangian "fluid-particle" models and are translation in a Lagrangian point of view (SDE) of some Eulerian PDE turbulence models (see e.g. Pope \cite{pope-94}, \cite{pope-11}, Durbin and Speziale \cite{DurSpe-94}). These models involve a particular family of nonlinear McKean-Vlasov SDEs where the McKean nonlinearities are of conditional form. Such particular form of nonlinearity models the influence of the macroscopic components of the flow on the particle motion. In some of our recent works, \cite{bossy-11}, \cite{bossy-15}, we have studied toy-version  models of conditional McKean SDEs where the singularity is concentrated in the drift term. From  a mathematical viewpoint, the wellposedness results obtained in \cite{bossy-11},\cite{bossy-15} are still far from covering the complexity of a  meaningful 'fluid-particle' model, as such Lagrangian models contain conditional McKean nonlinearity in both drift and diffusion components. In this paper, we focus on
singular McKean diffusive characteristic that motivates our interest in new wellposedness results in that direction.

In \cite{bossy-11},\cite{bossy-15} and for the construction of (numerical) approximation (we refer to Bossy {\it et al.} \cite{bossy-16},\cite{bossy-17} for some numerical description cases and experiments), we analyze the SDE \eqref{eq:NonlinearLangevin} in the framework of an apriori existing density $\rho_t(x,y) dx dy   = \Pp(X_t\in dx,\,Y_t\in dy)$. The model \eqref{eq:NonlinearLangevin} thus  becomes
\begin{equation}\label{eq:NonlinearLangevin_intro}
\left\{
\begin{aligned}
X_t & = X_0 + \int_0^t b(X_s, Y_s) ds + \int_0^t \sigma(X_s) dB_s, \\
Y_t & = Y_0 + \int_0^t \Lambda[X_s;\rho_s] ds + \int_0^t \Gamma[X_s;\rho_s] dW_s,
\end{aligned}
\right.
\end{equation}
with  $\Lambda$ and $\Gamma$  defined, for $(x,f)$ in
$\er^d\times L^1(\er^d\times\er^d)$, as
\begin{align*}
&\Lambda[x;f]  =
\dfrac{\int_{\er^d} \ell(y) f(x,y) dy }{\int_{\er^d} f(x,y) dy }
 \ind_{\{\int_{\er^d} f(x,y) dy\neq 0\}}
\quad \mbox{and} \quad  \Gamma[x;f]  =
\dfrac{\int_{\er^d} \gamma(y) f(x,y) dy }{\int_{\er^d} f(x,y) dy }
 \ind_{\{\int_{\er^d} f(x,y) dy\neq 0\}}.
\end{align*}
In comparison, our wellposedness result for the solution of \eqref{eq:NonlinearLangevin}, presented in Section \ref{sec:ConditionalDiffusion}, uses a $L^2(\Omega)$-fixed point construction and a suitable Girsanov transformation that relies on the strong ellipticity assumption on $\sigma$. Essentially, our working hypotheses will be to assume boundedness {and} Lipschitz continuity of $b,\sigma,\ell$ and $\gamma$ for the wellposedness of a weak solution to \eqref{eq:NonlinearLangevin}, and some {$L^p$ density condition on the initial distribution and a} uniform elliptic property on $\gamma$ to handle pathwise uniqueness. {At all time $t$, the time-marginal distributions $\law(X_t,Y_t)$ of this strong solution further admit a density function $\rho_t$, and so our constructed solution to \eqref{eq:NonlinearLangevin} is also solution to \eqref{eq:NonlinearLangevin_intro}.}

In the context of complex flow modeling, we would like  to  emphasi{se} that  a  targeted form of \eqref{eq:NonlinearLangevin} is a coupled position-velocity $(X_t,U_t;\,0\leq t\leq T)$ kinetic process with {degenerate}  diffusion {in the $X$-component} {together with a linear drift  $b(x,y)=y$:}
\begin{align*}
\left\{
\begin{aligned}
X_t & = X_0 + \int_0^t U_s ds \\
U_t & = Y_0 + \int_0^t \Ep[ \ell(U_s) | X_s]  ds + \int_0^t \Ep[\gamma(U_s) | X_s] dW_s.
\end{aligned}
\right.
\end{align*}
But unbounded drift case, {degenerate}  diffusion and singular  McKean kernel are a mix{ture} of difficulties {that are} quite hard to overcome {jointly}. 

For future works, to overcome the strong ellipticity assumption on $\sigma$ in \eqref{eq:NonlinearLangevin}, we further investigate some weaker {characterisation} method based on mild-equation formulation as in \cite{bossy-15}. In Section \ref{sec:ModerateNonlinearDiffusion}, we present a step further in that direction, applying such technique for our second study case on moderated McKean local diffusion equation:
\begin{align}\label{eq:moderated-intro}
\left\{
\begin{aligned}
&X_t =  X_0 + \int_0^t \sigma ( u(s,X_s) ) dW_s, \quad 0\leq t \leq T,\\
&{d}\;\!\law(X_t)=u(t,x) dx\,\mbox{ with }\,u \in {L^\infty((0,T)\times\er^d)\cap~ }L^2((0,T){\times}\er^d),\\
&u(0,{x})=u_0{(x),\,x\in\er^d,}\text{{ where $u_0$ is a given probability density function on $\er^d$}},
\end{aligned}
\right.
\end{align}
for any arbitrary time horizon $0<T<\infty$.
Nevertheless, our existence proof based on approximation method needs some strict monotonicity assumption which still coincides with the strong ellipticity in the one dimensional  framework.
In \cite{JouMel-98}, Jourdain and M{\'e}l{\'e}ard studied a moderately interacting model such as \eqref{eq:moderated-intro}, extending a previous work from Oelschl{\"a}ger \cite{Oelschlager-85}  on a moderately interacting model, where both the drift and diffusion coefficients depend locally on the time marginal densities of the {law of the solution} 
that are supposed to be smooth enough.  Whenever the nonlinearity is reduced to the diffusion part, the model in \cite{JouMel-98} {reduces to}:
\begin{equation}\label{eq:JourMel-98}
\left\{
\begin{aligned}
&X_t =  X_0 + \int_0^t \sigma ( p(s,X_s) ) dW_s, \quad 0\leq t \leq T,\\
&{d}\;\!\law(X_t)=p(t,x)\,dx\,\mbox{with}\,p\in \Cc^{1,2}_b([0,T]\times\er^d),\\
& p(0,\cdot)\mbox{ is a given probability density that belongs in the H\"older space $H^{2+\alpha}(\er^d)$ with $0<\alpha<1$}.
\end{aligned}
\right.
\end{equation}
In Section \ref{sec:ModerateNonlinearDiffusion}, we prove the wellposdness of a strong solution of \eqref{eq:moderated-intro}, mainly replacing the condition $p\in\Cc^{1,2}_b([0,T]\times\er^d)$ by $p\in L^{\infty}((0,T)\times\er^d)\cap L^{2}((0,T)\times\er^d)$  (replacing the strong ellipticity condition on  $\sigma$ needed for the equation  \eqref{eq:JourMel-98} by a strict monotonicity condition). Our proof {is focused} 
on the simple case where the diffusion component is given by $\sigma(r)I_d$ for a scalar function $\sigma:[0,\infty)\rightarrow[0,\infty)$. Extensions to further multidimensional diffusion component are discussed at the end of Section \ref{sec:ModerateNonlinearDiffusion}.

Our main results are Theorem \ref{thm:moderatedexist} in Section \ref{sec:ModerateNonlinearDiffusion} which states the strong wellposedness for the moderated McKean local diffusion equation \eqref{eq:moderated-intro}, and Theorem \ref{thm:wellposedNonlinearLangevin} in Section \ref{sec:ConditionalDiffusion} for the strong wellposedness of condition{al} McKean SDEs \eqref{eq:NonlinearLangevin_intro}. In the two cases, we obtained weak uniqueness of the solution with slight{y} weaker condition{s}.

We end this introductory section with a short review of results and approaches from the literature  for SDEs with McKean diffusion term, in order give some insights  to the reader with the two particular cases that we are addressing in this paper.

\subsection*{Review of some wellposedness results for nonlinear SDEs with McKean diffusion term}

We consider McKean-Vlasov SDEs of the following specific form in $\er^d$,
\begin{equation}\label{eq:ProtoDiffMcKean}
 X_t  = X_0 + \int_0^t \sigma( X_s, \law(X_s)) dW_s,\,0\leq t\leq T,
\end{equation}
up to a (possibly infinite) horizon time $T$.

Under the assumptions that $\EE[|X_0|^p]<\infty$, $1\leq p<\infty$,  and $\sigma$ is continuous on $\er^d\times {\mathcal{P}_p}(\er^d)$ for ${\mathcal{P}_p}(\er^d)$ being the space of probability measures with $p${-th} finite moments,  Funaki \cite{Funaki-84} showed the existence, on any arbitrary time interval, of a weak solution to \eqref{eq:ProtoDiffMcKean} in terms of a martingale problem. Uniqueness of the solution to the martingale problem holds under the assumption that
\[\| \sigma (x,\mu) - \sigma(y,\nu) \| \leq  C |x-y| + \kappa( {\Ww_p}(\mu,\nu)),\]
where {$\Ww_p$} is the Wasserstein distance endowed with the cost function $|x-y|^p$, and $\kappa:[0,\infty)\rightarrow[0,\infty)$ is a strictly increasing function such that $\kappa(0)=0$ and $\lim_{\epsilon\rightarrow 0^+}\int_\epsilon^\infty 1/\kappa^2(\sqrt{r})\,dr=\infty$.
Oelschl\"ager \cite{Oelschlager-84} considered the analog situation where $\sigma$ {is} bounded and Lipschitz for the metric
  \[
  \Vert\mu-\nu\Vert=\sup\left\{\int f(x)\left(\mu(dx)-\nu(dx)\right)\,{:}\,\max_{x\in\er^d}|f(x)|\leq 1\,\mbox{and}\,\Vert f\Vert_{Lip}:=\sup_{x\neq y}\frac{|f(x)-f(y)|}{|x-y|}\leq 1\right\},
  \]
  and proved the existence of a solution in law, as well as a weak propagation of chaos result for the related stochastic particle system. Both cases include the particular situation when the interaction kernel has the form: $\sigma(x,\mu) = \int_{\er^d} \sigma(x,y) \mu(dy)$. Moreover, in this framework, M\'el\'eard \cite{Meleard-96} showed, through a fixed point argument in the space $(\mathcal{P}_2(\Cc([0,T];\er^d)),W_2)$,  that whenever, $\sigma:\er^d\times\er^d\rightarrow\er^{d\times d}$ is Lipschitz continuous w.r.t. the two variables, {the} pathwise wellposedness and strong-pathwise propagation of chaos {holds} for the related stochastic particle system.

Jourdain and M\'el\'eard \cite{JouMel-98} extended the work of Oelschl{\"a}ger \cite{Oelschlager-85} on the  moderately interacting drift term model and prove the wellposedness of  \eqref{eq:JourMel-98}
with the following assumptions:
\begin{itemize}
\item[$\circ$]  {$p(0,x)=p_0(x)$ where} $p_0$  belongs to the H\"older space $H^{2+\alpha}(\er^d)$ with $0<\alpha<1$;
\item[$\circ$] $\sigma{:r\in \er\mapsto \sigma(r)\in\er^{d\times d}}$ is a  Lipschitz function, $\Cc^3$ on $\er$, with values in the space of symmetric non-negative matrices $d\times d$;
\item[$\circ$]   Strong ellipticity holds  for $\sigma$: {there exists $m_\sigma>0$ such that}
$\forall x\in\er^d,\forall {r}\in\er,\quad x^* \sigma({r}) x \geq m_\sigma |x|^2$;

\item[$\circ$] Non negativity holds for the diffusion matrix leading to the Fokker-Planck equation  written on divergence form:
\begin{align*}
\forall x\in\er^d,\forall {r}\in\er, \quad x^* \left( (\sigma\sigma^*)'({r}) {r} + (\sigma\sigma^*) ({r}) \right)  x \geq 0;
\end{align*}
(This latest assumption is used to derive the uniqueness from the Fokker-Planck equation related to \eqref{eq:JourMel-98}, written in divergence form:
\begin{equation}\label{eq:pdeJourMel-98}
\left\{\begin{aligned}
&\frac{\partial p}{\partial t} = \sum_{i=1}^d \frac{\partial}{ \partial x_i}\left( \frac{1}{2} \sum_{j=1}^d \Big( (\sigma\sigma^*)^\prime_{ij}(p) p + (\sigma\sigma^*)_{ij}(p)\Big) \frac{\partial  p}{\partial x_j}\right)~\text{on}~  (0,T)\times \er^d,\\
&p(0,x) = p_0(x),~{x\in\er^d,}
\end{aligned}\right.
\end{equation}
from maximum principle argument.)
\item[$\circ$] Strong ellipticity holds on the leading matrix: there exists $m_{\text{div}}>0$ such that
\[\forall x\in\er^d,\forall {r}\in\er, \quad x^* \left( (\sigma\sigma^*)'({r}) {r} + (\sigma\sigma^*) ({r}) \right)  x  \geq m_{\text{div}} |x|^2.\]
({With this additional} 
assumption, the Cauchy problem \eqref{eq:pdeJourMel-98} has a solution in $H^{1 + \frac{\alpha}{2},2+\alpha}(\er^d)$, and the nonlinear SDE admits a unique strong solution.)
\end{itemize}

Kohatsu-Higa and Ogawa  in \cite{KohOga-97} considered  nonlinear McKean-Vlasov dynamic in convolution form
\begin{align}\label{eq:intro_kohatsu}
X_t  = X_0 + \int_0^t A( X_s, \sigma* \law(X_s) ) dW_s.
\end{align}
Assuming that $A$ and $\sigma$ are Lipschitz with at most  linear growth, they prove the wellposedness of a strong solution and particle-time discrete approximation.

Jourdain and Reygner in \cite{JouRey-13} considered particular cases of scalar equation related to, and around, porus media equation which correspond to the case of equation \eqref{eq:intro_kohatsu} with $\sigma(x)= \ind_{\{x\geq 0\}}$ and $A(x,u) = A(u) >0$. The case $A(u)\geq 0$ is also studied using {t}he limit of a reordered particle system. 

Recently, Mishura and Veretennikov in \cite{MisVer-17} consider a model of the form
$$
dX_t=\sigma[t,X_t;\law(X_t)]\,dW_t,\,X_0\sim\mu_0,
$$
where
$$
\sigma[t,x;\mu]=\int \sigma(t,x,y)\mu(dy)
$$
for  $\sigma:(0,\infty)\times\er^d\times\er^d\rightarrow \er^{d\times d}$.
Assuming that $X_0$ has finite fourth order moments, $(x,y)\mapsto \sigma(t,x,y)$ has linear growth, uniformly in $t$, and $(t,x,y)\mapsto \sigma(t,x,y)\sigma^*(t,x,y)$ is uniformly strongly elliptic, the SDE admits at least one weak solution.

We end this review {by} mentioning some recent works in the direction of the wellposedness of the following system of SDE
\begin{align*}
\left\{
\begin{aligned}
&\dfrac{dS_t}{S_t}   =  r dt +  \frac{a(Y_t)}{\sqrt{\Ee[a^2(Y_t)|S_t]}} \sigma_{\text{Dup}} (t,S_t) S_t dW_t,    \\
&dY_t = \alpha(t,Y_t) dB_t  + \xi(t) dt.
\end{aligned}
\right.
\end{align*}
Such models arise in mathematical finance for the calibration of local and stochastic volatility  models, and where $\sigma_{\text{Dup}}(t,y)$ is the Dupire's local volatility function (see Gyongy \cite{Gyongy-86}). We emphasis{e} that a major difference with the model \eqref{eq:NonlinearLangevin} is in the conditioning variable  which is the coupled variable $X_t$ in our case and the unknown $S_t$ in the volatility calibration model. This particular case  generates different and yet hard difficulties {compared to \eqref{eq:NonlinearLangevin}}.
The existence of a local-in-time solution to the Fokker-Planck equation associated to this model has been established by Abergel and Tachet in \cite{AbeTac-11}, while Jourdain and Zhou\cite{JouZho-17} recently obtained a first global-in-time wellposedness result in the case when $Y$ is a (constant in time) discrete valued random variable.

\subsubsection*{Some notations}

Hereafter, $\Cc$ denotes the space of continuous functions equipped with the uniform norm $\Vert f\Vert_\infty=\max|f|$ and $\Cc^{k}$ denotes  the space of  $k$ times {continuously} differentiable functions. $\Cc_c$ and $\Cc^{k}_c$  respectively refer to the corresponding compactly supported subsets. For $m\geq 1$, and $1\leq p\leq \infty$, $L^p(\er^m)$  denotes the Lebesgue space of all Borel (measurable) functions $f:\er^m\rightarrow\er$ such that $\Vert f\Vert^p_{L^p(\er^m)}:=\int |f(z)|^p\,dz<\infty$, and $W^{1,p}(\er^m)$  {denotes} the Sobolev space
\[W^{1,p}(\er^m)=\{f\in L^p(\er^m)\text{ s.t. }\Vert \nabla_z f\Vert_{L^p(\er^m)}<\infty\},\]
 equipped with the norm $\Vert f\Vert_{W^{1,p}(\er^m)}:=\Vert f\Vert_{L^{p}(\er^m)}+\Vert \nabla_z f\Vert_{L^{p}(\er^m)}$. As usual, $H^1(\er^m)$ stands for the particular space $W^{1,2}(\er^m)$. For  $1\leq q\leq \infty$, $L^{q}((0,T);W^{1,p}(\er^m))$ denotes the space of Borel functions $f$ defined on $(0,T)\times\er^m$ such that
$$
\Vert f\Vert_{L^{q}((0,T);W^{1,p}(\er^m))}:=\Vert f\Vert_{L^{q}((0,T);L^{1,p}(\er^m))}+\Vert \nabla f\Vert_{L^{q}((0,T);L^{1,p}(\er^m))}
$$
is finite. Finally, the index $\loc$ will refer to local integrability property, namely $f$ belongs in $L^p_{\loc}$ if for all $0\leq R<\infty$, $f\ind_{\{B(0,R)\}}$ is in $L^p(\er^m)$.

\section{The  moderated McKean local diffusion equation revisited}\label{sec:ModerateNonlinearDiffusion}

In this section, we consider the wellposedness problem, up to an arbitrary finite horizon time $T>0$, for the following SDE:
\begin{subequations}
\label{eq:moderated}
\begin{align}
&X_t =  X_0 + \int_0^t \sigma ( u(s,X_s) ) dW_s, \quad 0\leq t \leq T,\label{eq:moderatedSDE}\\
&{d}\;\!\law(X_t)=u(t,x) dx\,\mbox{ where }\,u\mbox{ belongs in }  {L^\infty((0,T)\times\er^d)\cap~ }L^2((0,T){\times}\er^d),\label{eq:moderatedlaw}\\
&u({0,x})=u_0{(x),\,x\in\er^d,}\,\text{ where }\,u_0\,\text{is a given probability density function}\label{eq:moderatedinit}.
\end{align}
\end{subequations}
For the sake of simplicity, from now on, we restrict ourselves to the case of a diffusion matrix $\sigma$ mainly diagonal; that is $\sigma(r)=\sigma(r)I_d$ for $\sigma:\er\rightarrow\er$ and $I_d$ the identity $d\times d$ matrix. Extensions to more general diffusion matrices will be discussed at the end of this section (see Subsection \ref{rmk:ModerateGeneral}).

Let us further point out that the notion of solution to \eqref{eq:moderated} is intentionally restricted to the class of solutions satisfying \eqref{eq:moderatedlaw}. We consider the class of solutions of continuous processes satisfying \eqref{eq:moderatedSDE} and whose time-marginal distributions admit a representant in $L^2(\er^d)$ for a.e. $0\leq t\leq T$, {and in $L^\infty((0,T)\times\er^d)$}. The choice of working with this particular class is mainly motivated by the use of {comparison principles and } energy estimates techniques (see e.g. Evans \cite{Evans-97} and Vasquez \cite{Vasquez-06}) for the time-marginal distributions solution to the Fokker-Planck equation related to \eqref{eq:moderated}.  Energy estimates will  enable us to construct a suitable approximation to \eqref{eq:moderated} and also to deduce the uniqueness of the marginal distributions $u(t),\,0\leq t\leq T$.
We emphasis{e} that the divergence form for the Fokker-Planck equation makes appear {as a coefficient the map $r\mapsto \alpha(r)$ defined as}
\begin{align}\label{def:alpha}
\alpha(r)  := (\sigma^2(r) r)^\prime = 2\sigma^\prime (r) \sigma(r) r  +  \sigma^2(r),\text{ for }r\in\er^+,
\end{align}
 which our main hypothesis {is based on}.

{\paragraph{Remark:} {\it In the case when $\sigma$ and $\alpha$ are bounded, our proof arguments and subsequent wellposedness results can be extended to the class of solutions to \eqref{eq:moderatedSDE} satisfying $u\in L^2((0,T)\times\er^d)$ in place of \eqref{eq:moderatedlaw}.}}

Throughout this section, {Equation} \eqref{eq:moderated} is considered under the following set of assumptions:
\begin{hypothesis}\label{hypo:moderated}
\item{\bf\hypmodo} $u_0~\text{is a probability density function in}~L^1(\er^d) \cap  L^\infty(\er^d)$ such that $\int_{\er^d}|x|^2 u_0(x)dx < \infty$.
\item{\bf\hypmodi} The map $r \mapsto \sigma(r)\in\er$ is 
{continuously differentiable} on $\er^+$.
\item{\bf \hypmodii} The map $r\mapsto \alpha(r)$
is {continuous} on $\er^+$,  and there exists some constant $\eta>0$ such that
\[
\alpha(r)\geq \eta >0 ,\,\forall\,r\geq 0.
\]
\end{hypothesis}
Assumption \hypmodii  ensures that $\sigma$ is in $\Cc^1([0,\infty))$ and implies a classical assumption on the uniform positiv{ity} of $\sigma^{{2}}$:
$$
\sigma^2(r) \geq \eta,\,\forall\,r\geq 0,
$$
which further implies the uniform ellipticity of $\sigma^{2} I_d$. Yet, most of the time, we will also make use of the following assumption which{, together with a monotonic property of $\alpha$ (see Theorem \ref{thm:moderatedexist} and Proposition \ref{prop:moderateduniq} below),} allows  {possible} degeneracy of the diffusion $\sigma$ at point $r=0$:
\begin{hypothesis}
\item{\bf\hypmodiiweaken} The map $r\mapsto \alpha(r)$ is {continuous} on $\er^+$, and
\[ \alpha(r) \geq 0,\,\forall\,r\geq 0.\]
\end{hypothesis}

The main result of this section is the wellposedness for equation \eqref{eq:moderated}  given by the following theorem:
\begin{theorem}\label{thm:moderatedexist}
Under Hypothesis \ref{hypo:moderated}, there exists a unique strong solution to \eqref{eq:moderated}.
Uniqueness in law holds true under \hypmodo, \hypmodi,  \hypmodiiweaken  with the additional hypothesis that $r\mapsto \alpha(r)$ is strictly increasing.
\end{theorem}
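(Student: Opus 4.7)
My plan is to decouple \eqref{eq:moderated} into a nonlinear Fokker--Planck equation for the density and a linear SDE whose coefficient is given by that density. Applying It\^o's formula against test functions $\varphi\in\Cc^\infty_c(\er^d)$, any admissible solution's time-marginal density $u$ solves, in the distribution sense, the divergence-form quasilinear equation
\begin{equation*}
\partial_t u \;=\; \tfrac{1}{2}\Delta\bigl(\sigma^{2}(u)\,u\bigr) \;=\; \tfrac{1}{2}\nabla\cdot\bigl(\alpha(u)\nabla u\bigr), \qquad u(0,\cdot)=u_0,
\end{equation*}
since $\alpha(u)=(\sigma^2(u)u)'$ by \eqref{def:alpha}. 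The whole construction revolves around this PDE.

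For existence under \hypmodii, I would build a fixed point in $L^{\infty}((0,T)\times\er^d)\cap L^{2}((0,T)\times\er^d)$ for the map sending $v$ to the time-marginal density $T(v)$ of the linear SDE $dY_t=\sigma(v(t,Y_t))dW_t$. The pointwise lower bound $\sigma^{2}(v)\geq\eta$ places $T(v)$ within linear parabolic theory: the maximum principle and mass conservation give $\|T(v)(t)\|_{L^1}=1$ and $\|T(v)\|_{L^\infty}\leq\|u_0\|_{L^\infty}$, while multiplying $\partial_t T(v)=\tfrac12\Delta(\sigma^2(v)T(v))$ by $T(v)$ and integrating by parts produces a uniform bound on $\|T(v)\|_{L^2((0,T);H^1)}$. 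Carrying out the argument on a mollified problem with smooth $\sigma^\varepsilon,u_0^\varepsilon$ and using Aubin--Lions compactness to pass to the limit yields a weak solution $u$ in the right class; the associated linear SDE then admits a weak solution whose marginal, identified via the Fokker--Planck equation, coincides with $u$.

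For uniqueness in law under \hypmodiiweaken together with strict monotonicity of $\alpha$, I would establish an $L^1$-contraction. Two weak solutions $u_1,u_2$ with identical initial datum give $\partial_t(u_1-u_2)=\tfrac{1}{2}\Delta\bigl(A(u_1)-A(u_2)\bigr)$ with $A(r):=\sigma^2(r)r=\int_0^r\alpha(s)\,ds$ strictly increasing. A Kruzhkov-type doubling-of-variables argument, or equivalently testing against a regularized sign of $A(u_1)-A(u_2)$ whose distributional Laplacian integrates by parts to a non-positive term proportional to $|\nabla(A(u_1)-A(u_2))|^2$, yields $\|u_1(t)-u_2(t)\|_{L^1}\leq\|u_1(0)-u_2(0)\|_{L^1}=0$. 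Strict monotonicity of $\alpha$ is used precisely to ensure that $A(u_1)=A(u_2)$ forces $u_1=u_2$. Once the density is uniquely determined, weak uniqueness for \eqref{eq:moderatedSDE} follows from uniqueness of the Stroock--Varadhan martingale problem with bounded measurable uniformly elliptic coefficient $\sigma^2(u)I_d$.

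For pathwise uniqueness and strong existence under \hypmodii, interior H\"older regularity of weak solutions to the uniformly parabolic quasilinear PDE (De Giorgi--Nash--Moser) upgrades $x\mapsto\sigma(u(t,x))$ to a H\"older continuous function on $(0,T]\times\er^d$; combined with the uniform ellipticity $\sigma^2(u)\geq\eta$, this places the SDE within the scope of classical Veretennikov or Zvonkin-type pathwise uniqueness results, and Yamada--Watanabe converts weak existence plus pathwise uniqueness into strong wellposedness. The step I expect to be the main obstacle is the $L^1$-contraction, which must be executed in the rough $L^\infty\cap L^2$ framework rather than the $\Cc^{1,2}_b$ framework used in \cite{JouMel-98} and must remain valid in the possibly degenerate setting of \hypmodiiweaken; the strict monotonicity of $\alpha$ (through its primitive $A$) is precisely the structural replacement for strong ellipticity that makes the doubling-of-variables scheme close.
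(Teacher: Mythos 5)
Your existence strategy — decouple into a nonlinear Fokker--Planck equation, run a fixed point in $L^\infty\cap L^2$, regularize and pass to the limit by compactness — is essentially the route the paper takes: they regularize via $\sigma^2_\varepsilon=\sigma^2+\varepsilon$, apply Schaefer's fixed point theorem to the map $v\mapsto \overline u^\varepsilon(v)$ solving the linearized PDE (Lemmas \ref{lem:linear_pde}--\ref{lem:WellposednessSmoothpde}), and then let $\varepsilon\to 0$ using \eqref{eq:smoothednonlinear_ineq_energy} and the Vasquez-type identity \eqref{eq:Vasqueztype_ineq} together with tightness of the laws $P^\varepsilon$ (Proposition \ref{prop:limit_nonlinear_smooth_sdepde}). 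That part of your plan is consistent with the paper.

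Your uniqueness-in-law argument is a genuinely different route, and you correctly identify its weak point but do not overcome it. The paper does \emph{not} use an $L^1$-contraction; Proposition \ref{prop:moderateduniq} writes the Fokker--Planck equation in mild form against a free Gaussian kernel $G^\gamma$, converts $u^1-u^2$ into a singular heat potential, and closes via the $L^2$ Stroock--Varadhan bound \eqref{eq:StroockVaradhan-estim} combined with a Taylor expansion of $\sigma^2(r)r$ in terms of $\alpha$; strict monotonicity of $\alpha$ enters to make the contraction strict. This argument needs nothing beyond $u^i\in L^\infty((0,T)\times\er^d)\cap L^2((0,T)\times\er^d)$, which is exactly the class \eqref{eq:moderatedlaw}. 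Your $L^1$-contraction, in contrast, requires either $\nabla\bigl(A(u_1)-A(u_2)\bigr)\in L^2$ (to run the integration-by-parts with a regularized sign) or an entropy-solution framework (Carrillo / Chen--Perthame) to close a Kruzhkov doubling. Neither is available a priori for a competitor weak solution in the class \eqref{eq:moderatedlaw}: that class carries no gradient information, and the paper constructs $\nabla\Phi(u^0)\in L^2$ only for \emph{its} solution, not for an arbitrary one. So the step you flag as the main obstacle is indeed a genuine gap, and the paper's mild-equation device is precisely what circumvents it.

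Your pathwise-uniqueness step has a more concrete error. De Giorgi--Nash--Moser gives interior H\"older continuity of $u$, hence of $(t,x)\mapsto\sigma(u(t,x))$, but H\"older continuity of a bounded, uniformly elliptic diffusion matrix does \emph{not} imply pathwise uniqueness in dimension $d\geq 2$ — it only gives weak (Stroock--Varadhan) uniqueness. Veretennikov's theorem, as quoted in the paper's Section \ref{sec:strong}, requires a Sobolev factor $\sigma_d\in W^{1,2d+2}_{\loc}$ in the diffusion, and Zvonkin's original result is one-dimensional; H\"older regularity does not imply $W^{1,p}$. The paper instead exploits the energy bound from \eqref{eq:Vasqueztype_ineq} and the ellipticity $\alpha\geq\eta$ of \hypmodii to get $u\in L^2((0,T);H^1(\er^d))$ (Corollary \ref{coro:limit_nonlinear_smooth_sdepde}) and then invokes Champagnat--Jabin (Theorem \ref{thm:ChJab-17}), a pathwise-uniqueness criterion tailored to Sobolev rather than H\"older regularity of the coefficient and of the marginal density. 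To repair your argument you would need to replace the DGNM step by an estimate yielding $\nabla_x u\in L^2$, which is exactly the content of the paper's Corollary \ref{coro:limit_nonlinear_smooth_sdepde}, and then apply a Sobolev-based uniqueness theorem.
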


\paragraph{Main ingredients and steps of the proof. } The rest of this section is devoted to our proof of Theorem \ref{thm:moderatedexist} that relies on the following three main ingredients:
\begin{itemize}
\item[1.] An appropriate form of $\varepsilon$-nondegenerat{e} approximation of the diffusion $\sigma$.
In a first step,  we show the wellposedness of a family of  $\varepsilon$-approximation $\{(u^\varepsilon(t),\,X^\varepsilon_t;\,0\leq t\leq T), \varepsilon>0\}$ to \eqref{eq:moderated} where $\sigma$ is replaced by $\sigma_\varepsilon$ defined by
\begin{equation}\label{epsilondiffusion}
\sigma^2_\varepsilon (r)  = \sigma^2(r) + \varepsilon,~\forall r \geq 0.
\end{equation}
Notice that \eqref{epsilondiffusion} produces a suitable approximation of the map $r\mapsto \alpha(r)$ by
\begin{equation}\label{eq:mode_nondegen_dif}
\alpha_\varepsilon (r) = (\sigma^2_\varepsilon (r)  r )^\prime  = (\sigma^2(r)r)^\prime  + \varepsilon = \alpha(r) + \varepsilon,~\forall r \geq 0.
\end{equation}
\item[2.]
The construction of $(u^\varepsilon(t),\,X^\varepsilon_t;\,0\leq t\leq T)$ is then obtained from a preliminary existence result of a $L^\infty((0,T)\times\er^d)\cap L^2((0,T);H^1(\er^d))$-weak solution to the related Fokker-Planck equation, and some uniform  energy estimates w.r.t $\varepsilon$ on this solution. Such estimates allow to deduce successively the relative compactness of $\{u^\varepsilon,\varepsilon>0\}$ in $L^2((0,T);H^1(\er^d))$ and of $\{\law(X^\varepsilon_t;\,0\leq t\leq T),\varepsilon>0\}$ in $\mathcal{P}(\Cc([0,T];\er^d))$. The corresponding limits of converging subsequences are then shown to jf{be} a solution of the martingale problem related to \eqref{eq:moderated}. This main step is stated in Proposition \ref{prop:limit_nonlinear_smooth_sdepde} below.

\item[3.] Uniqueness in law is obtained from a mild-form  equation in $L^2((0,T)\times\er^d)$ derived from the Fokker-Planck equation in Proposition \ref{prop:moderateduniq}.  The  mild approach used here allows us to get ri{d} of the strong ellipticity hypothesis for $\sigma$, at least at point $0$.
The {weak} uniqueness result is then obtained under \hypmodii, but also under \hypmodiiweaken with the adding of the strict monotonicity for $\alpha$. {Uniqueness in the pathwise sense is stated in Proposition \ref{prop:moderatedstronguniq}.}
\end{itemize}

\subsection{Nondegenerate approximation of \eqref{eq:moderated}}
In this section, we construct a solution to the SDE
\begin{subequations}\label{eq:mode_sde_smooth}
 \begin{align}
 &X^\varepsilon_t =  X_0 + \int_0^t \sqrt{\sigma^2_\varepsilon( u^\varepsilon(s,X_s) )} dW_s, \quad 0\leq t \leq T,\label{eq:moderatedapproxSDE}\\
&{d}\;\!\law(X^\varepsilon_t)=u^\varepsilon(t,x) dx, \mbox{ with } u^\varepsilon \in {L^\infty((0,T)\times\er^d)\cap~ }L^2((0,T){\times}\er^d),\label{eq:moderateapproxdlaw}\\
&u^\varepsilon({0,x})=u_0{(x)}{, x\in\er^d}\,\label{eq:moderateapporxdinit},
\end{align}
\end{subequations}
where $\sigma^2_\varepsilon (r)  = \sigma^2(r) + \varepsilon,~\forall r \geq 0$, and we {show} some appropriate density estimates for the marginal densities $u^\varepsilon(t)$.  Defined as such, the diffusion coefficient $\sigma_\varepsilon$ still satisfies \hypmodi. Our existence proof is mainly deduced from a PDE analysis of the smoothed Fokker-Planck equation related to \eqref{eq:mode_sde_smooth}:
\begin{equation}\label{eq:mode_pde_smooth}
\left\{
\begin{aligned}
&\frac{\partial u^\varepsilon}{\partial t } - \frac{1}{2}\triangle_x (\sigma_\varepsilon^2(u^\varepsilon) u^\varepsilon) = 0,{\text{ on}\,(0,T)\times\er^d,}\\
&u^\varepsilon({0},x)=u_0(x),\,x\in\er^d.
\end{aligned}
\right.
\end{equation}
We proceed for the existence of a solution to \eqref{eq:mode_pde_smooth}, first  by  exhibiting the existence of a weak solution of a linearized version of \eqref{eq:mode_pde_smooth}, and next by a fixed point argument we deduce the existence. These results are given in the {following two} lemmas: 
\begin{lemma}\label{lem:linear_pde}
Assume \hypmodo, \hypmodi and \hypmodiiweaken. Let $v=v(t,x)$ be a non-negative given function {belonging to} ${L^\infty((0,T)\times\er^d)\cap }L^2((0,T)\times\er^d)$. Then there exists $\overline{u}^\varepsilon\in\LH\cap \Cc([0,T];L^2(\er^d))$ such that, for all $f\in\Cc^{1,2}_c([0,T]\times\er^d)$, {for all $0\leq T_0\leq T$,}
\begin{equation}\label{eq:mode_pde_linear}
\begin{aligned}
&\int_{\er^d}\overline{u}^\varepsilon(T_0,x)f(T_0,x)\,dx-\int_{\er^d}u_0(x)f(0,x)\,dx\\
&\qquad=\int_{(0,T_0)\times\er^d}\overline{u}^\varepsilon(t,x)\partial_t f(t,x)-{\frac{1}{2}\alpha_\varepsilon}(v(t,x)) \nabla_x\overline{u}^\varepsilon(t,x)\cdot \nabla_x f(t,x) \,dt\,dx,
\end{aligned}
\end{equation}
for {${\alpha_\varepsilon}$ defined as in \eqref{eq:mode_nondegen_dif}}.
In addition, $\overline{u}^\varepsilon$ has nonnegative values a.e. on $(0,T)\times\er^d$,
\begin{equation}\label{eq:linear_ineq_Linfini}
\Vert \overline{u}^\varepsilon\Vert_{L^\infty((0,T)\times\er^d)}\leq \Vert u_0\Vert_{L^\infty(\er^d)},
\end{equation}
and
 \begin{align}\label{eq:linear_ineq_energy}
\max_{t\in[0,T]} \|\overline{u}^\varepsilon(t)\|^2_{L^2(\er^d)} + \varepsilon \int_0^T \|\nabla_x \overline{u}^\varepsilon(t) \|^2_{L^2(\er^d)} dt \leq \|u_0\|^2_{L^2(\er^d)}.
\end{align}
\end{lemma}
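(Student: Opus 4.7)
The weak formulation \eqref{eq:mode_pde_linear} encodes the linear divergence-form parabolic equation
\begin{equation*}
\partial_t \overline{u}^\varepsilon - \tfrac12\,\mathrm{div}\bigl(\alpha_\varepsilon(v(t,x))\,\nabla_x \overline{u}^\varepsilon\bigr) = 0,\qquad \overline{u}^\varepsilon(0,\cdot)=u_0,
\end{equation*}
with coefficient $a(t,x):=\tfrac12\alpha_\varepsilon(v(t,x))$ that is bounded and measurable: indeed, since $v\in L^\infty((0,T)\times\er^d)$ and $\alpha$ is continuous on $\er^+$ by \hypmodiiweaken, $a$ satisfies $\varepsilon/2 \leq a(t,x) \leq M/2$ a.e., where $M := \sup_{r\in[0,\|v\|_{L^\infty}]}\alpha_\varepsilon(r)<\infty$. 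I would first apply Lions' variational theory for parabolic equations on the Gelfand triple $H^1(\er^d)\hookrightarrow L^2(\er^d)\hookrightarrow H^{-1}(\er^d)$: the time-dependent bilinear form $\mathfrak a(t;\phi,\psi):=\int_{\er^d} a(t,x)\,\nabla\phi\cdot\nabla\psi\,dx$ is continuous and satisfies the G\aa rding inequality $\mathfrak a(t;\phi,\phi) + (\varepsilon/2)\|\phi\|_{L^2}^2 \geq (\varepsilon/2)\|\phi\|_{H^1}^2$, which delivers a unique weak solution $\overline{u}^\varepsilon\in L^2((0,T);H^1(\er^d))\cap \Cc([0,T];L^2(\er^d))$ with $\partial_t\overline{u}^\varepsilon\in L^2((0,T);H^{-1}(\er^d))$. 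The integrated identity \eqref{eq:mode_pde_linear} is then recovered by pairing with $f\in\Cc^{1,2}_c$ and integrating the time-derivative term by parts over $(0,T_0)$.

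\paragraph{Non-negativity and $L^\infty$ bound.} Both estimates rest on the Stampacchia truncation method. For positivity, the negative part $(\overline{u}^\varepsilon)_-$ inherits $L^2((0,T);H^1(\er^d))$-regularity with $\nabla(\overline{u}^\varepsilon)_- = -\ind_{\{\overline{u}^\varepsilon<0\}}\nabla\overline{u}^\varepsilon$. Testing the weak form with $-(\overline{u}^\varepsilon)_-$ and invoking the Lions chain rule gives
\begin{equation*}
\tfrac12\tfrac{d}{dt}\|(\overline{u}^\varepsilon)_-(t)\|_{L^2(\er^d)}^2 + \int_{\er^d} a(t,x)\,|\nabla(\overline{u}^\varepsilon)_-(t,x)|^2\,dx = 0,
\end{equation*}
so the initial condition $u_0\geq 0$ forces $(\overline{u}^\varepsilon)_-\equiv 0$ a.e. Exactly the same argument applied to $(\overline{u}^\varepsilon - \|u_0\|_{L^\infty})_+$, whose trace at $t=0$ vanishes, produces \eqref{eq:linear_ineq_Linfini}.

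\paragraph{Energy estimate and main obstacle.} Finally, testing with $\overline{u}^\varepsilon$ itself and retaining only the lower bound $a(t,x)\geq \varepsilon/2$ yields $\tfrac{d}{dt}\|\overline{u}^\varepsilon(t)\|_{L^2}^2 + \varepsilon\|\nabla \overline{u}^\varepsilon(t)\|_{L^2}^2 \leq 0$, and integrating over $(0,t)$ for $t\in[0,T]$ gives \eqref{eq:linear_ineq_energy}. The only genuine technical obstacle is that the truncation test functions $(\overline{u}^\varepsilon)_-$, $(\overline{u}^\varepsilon - \|u_0\|_{L^\infty})_+$ and $\overline{u}^\varepsilon$ are not in $\Cc^{1,2}_c([0,T]\times\er^d)$; one must justify their admissibility by a density/approximation argument, multiplying by a spatial cut-off $\chi_R$ if needed and verifying, via the $L^2$-in-time control on $\nabla\overline{u}^\varepsilon$ and $\overline{u}^\varepsilon$, that the commutator terms vanish as $R\to\infty$.
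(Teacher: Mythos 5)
Your proposal is correct and follows the same variational skeleton as the paper: existence via Lions' theory for parabolic equations on the Gelfand triple $H^1(\er^d)\hookrightarrow L^2(\er^d)\hookrightarrow H^{-1}(\er^d)$, using the G\aa rding coercivity $\mathfrak a(t;\phi,\phi)+\tfrac{\varepsilon}{2}\|\phi\|_{L^2}^2\geq\tfrac{\varepsilon}{2}\|\phi\|_{H^1}^2$ (the paper cites Theorem 1.1, Lemma 1.1, and Theorem 2.1 of Lions' Chapter 4 for exactly this step and for the $\Cc([0,T];L^2)$ regularity). The difference is in how you justify the energy identities for the non-smooth test functions. You invoke the chain rule in the Gelfand triple, $\tfrac{d}{dt}\tfrac12\|u(t)\|_{L^2}^2=\langle\partial_t u,u\rangle_{H^{-1},H^1}$, together with its Stampacchia-type extension to the truncations $(\overline{u}^\varepsilon)_-$ and $(\overline{u}^\varepsilon-\|u_0\|_{L^\infty})_+$, whereas the paper proceeds entirely by Steklov averaging \`a la Lady\v{z}enskaja--Solonnikov--Ural'ceva (introduce $\overline{u}^\varepsilon_h(t,x)=\frac1h\int_t^{t+h}\overline{u}^\varepsilon$, extend the weak formulation by density to $\eta\in L^2((0,T_0);H^1(\er^d))$, test with $\overline{u}^\varepsilon_h$, $(\overline{u}^\varepsilon)^-$, $(\overline{u}^\varepsilon-K)^+$, and pass $h\to0$). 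Both routes are standard and deliver the same inequalities with the same constants; the Steklov approach is lower-tech and self-contained, while the chain-rule approach is shorter but rests on the extra regularity $\partial_t\overline{u}^\varepsilon\in L^2((0,T);H^{-1})$ and on the truncation composition lemma, which you gesture at rather than cite. One refinement to your last paragraph: the essential obstruction to plugging in these test functions is the lack of $\Cc^1$ regularity \emph{in time}, not compact support in space (which is routine once the $L^2((0,T);H^1)$ bounds are in place); the paper's Steklov averages address precisely this time-regularity issue, and the chain rule is the alternative way around it. With that caveat made precise, your argument is complete.
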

For the nonlinear PDE \eqref{eq:mode_pde_smooth}, we extend the notion of a $\LH$-weak solution, stated in {this lemma~\ref{lem:linear_pde}}, as a function $u^\varepsilon\in\LH$ such that: for all $0< T_0\leq T$, and $f\in\Cc^\infty_c([0,T_0]\times\er^d)$,
\begin{align*}
&\int_{\er^d}u^\varepsilon(T_0,x)f(T_0,x)\,dx-\int_{\er^d}u_0(x)f(0,x)\,dx\\
&\qquad=\int \int_{(0,T_0)\times\er^d}u^\varepsilon(t,x)\partial_t f(t,x) \,dt\,dx+\int_{(0,T_0)\times\er^d}\frac{1}{2}\sigma_\varepsilon^2(u^\varepsilon(t,x)) u^\varepsilon(t,x)\triangle_x f(t,x) \,dt\,dx.
\end{align*}
\begin{lemma}\label{lem:WellposednessSmoothpde}
Assuming that \hypmodo, \hypmodi and \hypmodiiweaken~{hold},  the nonlinear PDE \eqref{eq:mode_pde_smooth} admits a unique nonnegative ${\Cc([0,T];L^2(\er^d))\cap }L^2((0,T);H^1(\er^d))$-weak solution  
$u^\varepsilon$. This solution is uniformly bounded with
\begin{equation}\label{eq:smoothednonlinear_ineq_Linfini}
\|u^\varepsilon\|_{L^\infty((0,T)\times\er^d)}\leq \|u_0\|_{L^\infty(\er^d)},
\end{equation}
and satisfies the energy inequality:
\vspace{-0.3cm}
\begin{equation}\label{eq:smoothednonlinear_ineq_energy}
\sup_{t\in[0,T]} \|u^\varepsilon(t)\|^2_{L^2(\er^d)} +  \varepsilon \int_0^T \|\nabla_x u^\varepsilon(t) \|^2_{L^2(\er^d)} dt \leq \|u_0\|^2_{L^2(\er^d)}.\\
\end{equation}
In addition, we have, for all $0\leq T_0\leq T$,
\begin{equation}\label{eq:Vasqueztype_ineq}
 \int_{\er^d} \Psi_\varepsilon(u^\varepsilon(T_0,x))\,dx+\frac{1}{2}\int_{(0,T_0)\times\er^d} \left|\nabla_x \Phi_\varepsilon(u^\varepsilon(t,x))\right|^2\,dt\,dx=\int_{\er^d}\Psi_\varepsilon(u_0(x))\,dx,
\end{equation}
for $\Phi_\varepsilon(r)=\sigma^2_\varepsilon(r)r$ and $\Psi_\varepsilon(r)=\int_0^r\Phi_\varepsilon(\theta) \,d\theta$.
\end{lemma}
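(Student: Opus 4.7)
The plan is to construct the weak solution of \eqref{eq:mode_pde_smooth} by a Schauder fixed-point argument built on Lemma~\ref{lem:linear_pde}, exploiting the fact that, under \hypmodiiweaken, the smoothed coefficient $\alpha_\varepsilon=\alpha+\varepsilon$ is bounded below by $\varepsilon>0$. Precisely, I would introduce the closed convex set
\[
K:=\bigl\{v\in L^2((0,T)\times\er^d):\; 0\le v(t,x)\le \|u_0\|_{L^\infty(\er^d)}\ \text{a.e.}\bigr\},
\]
and define $\mathcal{T}:K\to K$ by $\mathcal{T}(v)=\overline u^\varepsilon$, the function produced by Lemma~\ref{lem:linear_pde} with coefficient $\alpha_\varepsilon(v)$. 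The nonnegativity part of that lemma and \eqref{eq:linear_ineq_Linfini} give $\mathcal{T}(K)\subset K$; \eqref{eq:linear_ineq_energy} provides a bound in $L^\infty(0,T;L^2)\cap L^2(0,T;H^1)$ uniform in $v\in K$, and feeding this back into the weak equation (using the uniform boundedness of $\alpha_\varepsilon(v)$ on $[0,\|u_0\|_{L^\infty}]$) yields a uniform bound on $\partial_t\overline u^\varepsilon$ in $L^2(0,T;H^{-1}(\er^d))$. Aubin--Lions localized on balls, combined with tightness at infinity (obtained from mass conservation and the second-moment control propagated from \hypmodo), gives relative compactness of $\mathcal{T}(K)$ in $L^2((0,T)\times\er^d)$; continuity of $\mathcal{T}$ then follows by dominated convergence in the weak formulation \eqref{eq:mode_pde_linear} using the continuity of $\alpha_\varepsilon$, and Schauder's theorem produces a fixed point $u^\varepsilon=\mathcal{T}(u^\varepsilon)$, which by construction is a weak solution of \eqref{eq:mode_pde_smooth}. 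The estimates \eqref{eq:smoothednonlinear_ineq_Linfini}--\eqref{eq:smoothednonlinear_ineq_energy} are then just \eqref{eq:linear_ineq_Linfini}--\eqref{eq:linear_ineq_energy} read at the fixed point.

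\textbf{Vasquez identity and uniqueness.} The identity \eqref{eq:Vasqueztype_ineq} is, at a formal level, obtained by testing \eqref{eq:mode_pde_smooth} against $\Phi_\varepsilon(u^\varepsilon)$ and combining $\partial_t\Psi_\varepsilon(u^\varepsilon)=\Phi_\varepsilon(u^\varepsilon)\partial_t u^\varepsilon$ with one integration by parts. I would justify it in the class $L^\infty\cap L^2(0,T;H^1)$ by a Steklov average in time and a spatial cut-off expanding to $\er^d$, noticing that $\Phi_\varepsilon$ is Lipschitz on $[0,\|u_0\|_{L^\infty}]$ so that $\nabla\Phi_\varepsilon(u^\varepsilon)=\alpha_\varepsilon(u^\varepsilon)\nabla u^\varepsilon\in L^2((0,T)\times\er^d)$, and that the tail contributions vanish thanks to the second-moment bound. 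For uniqueness, given two weak solutions $u_1,u_2$ sharing the same initial datum, the difference $w=u_1-u_2$ weakly solves $\partial_t w=\tfrac12\Delta F$ with $F=\Phi_\varepsilon(u_1)-\Phi_\varepsilon(u_2)=A\,w$ and
\[
A(t,x)=\int_0^1\alpha_\varepsilon\!\bigl(\theta u_1+(1-\theta)u_2\bigr)\,d\theta\in[\varepsilon,C].
\]
A standard Holmgren duality argument against the backward uniformly-elliptic linear equation with diffusion $A$ (well posed thanks to Aronson--Serrin theory for bounded measurable coefficients), or equivalently a monotone-operator test against a time-regularization of $F$, yields $w\equiv 0$.

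\textbf{Main obstacle.} The most delicate step is the rigorous derivation of \eqref{eq:Vasqueztype_ineq}: since $u^\varepsilon$ is only in $L^\infty\cap L^2(0,T;H^1)$ and $\partial_t u^\varepsilon$ lives in a negative-order Sobolev space, using $\Phi_\varepsilon(u^\varepsilon)$ as a test function requires reconciling a chain rule in time (which needs a Steklov or mollification procedure) with a spatial cut-off whose tail errors must be driven to zero on the unbounded domain $\er^d$. Once this identity is in hand, all remaining steps---the fixed point, the $L^\infty$ and energy bounds, and the monotone-operator uniqueness---become classical consequences of the uniform ellipticity $\alpha_\varepsilon\ge\varepsilon$ of the smoothed problem.
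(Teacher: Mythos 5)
Your proposal follows essentially the same route as the paper: a fixed-point argument built on Lemma~\ref{lem:linear_pde} for the linearised problem, with the estimates \eqref{eq:smoothednonlinear_ineq_Linfini}--\eqref{eq:smoothednonlinear_ineq_energy} read off at the fixed point, and the identity \eqref{eq:Vasqueztype_ineq} justified by a Steklov average in time and testing against $\Phi_\varepsilon$ of the averaged solution. The differences are of detail rather than strategy, but some are worth noting. First, you invoke Schauder on the closed convex set $K\subset L^2((0,T)\times\er^d)$, while the paper invokes Schaefer on $\Xx\subset L^2(0,T;H^1(\er^d))$ (continuity, compactness, and boundedness of $\Ii=\{u:\ u=\lambda A(u)\}$); both work, and the paper's choice of working norm $L^2(0,T;H^1)$ is what makes the continuity estimate for $A$ close cleanly via the linear energy identity. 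Second, for compactness you propose Aubin--Lions localised on balls plus tightness via propagated second moments; the paper's stated compactness argument is more terse (boundedness in $L^2(0,T;H^1)$ followed by weak extraction and an asserted strong $L^2$-convergence) and, as written, does not obviously justify strong convergence without an argument of the Aubin--Lions type you supply, so on this point your proposal is actually more complete. Third, for \eqref{eq:Vasqueztype_ineq} the spatial cut-off you add is not needed: since $u^\varepsilon\in L^2(0,T;H^1(\er^d))$ and $\Phi_\varepsilon$ is Lipschitz on $[0,\|u_0\|_{L^\infty}]$, the paper checks directly that $\Phi_\varepsilon(u^\varepsilon_h)\in L^2(0,T;H^1(\er^d))$ and uses it as a global test function; your version is a harmless but superfluous extra step. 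Finally, you give a uniqueness argument (Holmgren duality / monotone operator against the divergence-form coefficient $A(t,x)=\int_0^1\alpha_\varepsilon(\theta u_1+(1-\theta)u_2)\,d\theta\in[\varepsilon,C]$) that the paper's proof of this lemma does not actually spell out, despite the statement claiming uniqueness; this is a welcome addition rather than a deviation.
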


Now the existence of a weak solution to \eqref{eq:mode_sde_smooth} first  cou{l}d be classical{l}y reformulated into a martingale problem.
Owing to the boundedness of {$(t,x)\mapsto \sigma(u^\epsilon(t,x))$}, this gives (see Theorem 2.6, in Figalli \cite{Figalli-08}) the following result.
\begin{proposition}\label{prop:WellposednessSmoothsde}
Under \hypmodo, \hypmodi and \hypmodiiweaken,  there exists a unique weak solution $(X^\varepsilon_t;\,0\leq t\leq T)$ to \eqref{eq:mode_sde_smooth} such that the time marginal densities are given by $(u^\varepsilon(t);\,0\leq t\leq T)$ from Lemma \ref{lem:WellposednessSmoothpde}.
\end{proposition}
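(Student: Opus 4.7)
The strategy is to \emph{freeze} the nonlinearity in \eqref{eq:moderatedapproxSDE} at the density $u^\varepsilon$ already constructed in Lemma~\ref{lem:WellposednessSmoothpde}, reducing the problem to a linear SDE with bounded, uniformly elliptic diffusion coefficient, and then invoke the Ambrosio--Figalli superposition principle (\cite[Theorem~2.6]{Figalli-08}) to identify the time marginals of its solution with $u^\varepsilon$.

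Setting $a(t,x):=\sigma_\varepsilon^2(u^\varepsilon(t,x))$, the $L^\infty$ bound \eqref{eq:smoothednonlinear_ineq_Linfini} together with the definition \eqref{epsilondiffusion} yields
\[
\varepsilon \;\leq\; a(t,x)\;\leq\;\sup_{0\le r\le\|u_0\|_{L^\infty(\er^d)}}\sigma^2(r)+\varepsilon\qquad\text{a.e. on }(0,T)\times\er^d,
\]
so $a$ is bounded, measurable, and uniformly elliptic. With this choice the nonlinear PDE \eqref{eq:mode_pde_smooth} solved by $u^\varepsilon$ now reads as the \emph{linear} Fokker--Planck equation
\[
\partial_t u^\varepsilon\;=\;\tfrac{1}{2}\triangle_x\bigl(a(t,x)\,u^\varepsilon\bigr),\qquad u^\varepsilon(0,\cdot)=u_0,
\]
which is precisely the forward Kolmogorov equation associated to the linear, time-inhomogeneous SDE $dX^\varepsilon_t=\sqrt{a(t,X^\varepsilon_t)}\,dW_t$.

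For existence, I would verify the hypotheses of Figalli's superposition theorem: $a$ is bounded and measurable; $u^\varepsilon$ is a nonnegative distributional solution of the above linear Fokker--Planck equation starting from the probability density $u_0$; and the second moment $\int_{\er^d}|x|^2 u^\varepsilon(t,x)\,dx$ remains finite on $[0,T]$, thanks to the finite second moment assumption in \hypmodo~and a standard Gr\"onwall estimate based on the upper bound for $a$. Then \cite[Theorem~2.6]{Figalli-08} furnishes a probability measure on $\Cc([0,T];\er^d)$ under which the canonical process solves the martingale problem for $\tfrac12 a(t,x)\triangle_x$ with time marginals $u^\varepsilon(t,x)\,dx$; equivalently, a weak solution of \eqref{eq:mode_sde_smooth} with the desired marginal densities exists on an enlarged probability space carrying a Brownian motion.

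For uniqueness in law, any weak solution of \eqref{eq:mode_sde_smooth} whose marginals lie in $L^\infty((0,T)\times\er^d)\cap L^2((0,T)\times\er^d)$ has, by definition, marginals that solve the nonlinear Fokker--Planck equation \eqref{eq:mode_pde_smooth}; by the uniqueness part of Lemma~\ref{lem:WellposednessSmoothpde} these marginals coincide with $u^\varepsilon$. The solution is thus a weak solution of the martingale problem for the \emph{fixed} linear operator $\tfrac12 a(t,x)\triangle_x$, whose well-posedness among measure-valued curves with bounded density follows once again from Figalli's superposition principle, combined with the uniqueness of the linear Fokker--Planck equation in $\Cc([0,T];L^2(\er^d))\cap L^2((0,T);H^1(\er^d))$ (a standard $L^2$-energy estimate on the difference of two such solutions, using uniform ellipticity of $a$). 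The only delicate point I anticipate is matching exactly the regularity and integrability setting of \cite[Theorem~2.6]{Figalli-08}, but the $L^\infty$ bound and the finite second moment propagation together with \hypmodo~provide all the required hypotheses.
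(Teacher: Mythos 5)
Your proposal follows essentially the same route as the paper: freeze the diffusion coefficient at $\sigma_\varepsilon^2(u^\varepsilon(t,x))$ using the density from Lemma~\ref{lem:WellposednessSmoothpde}, observe that the resulting coefficient is bounded (and elliptic), and invoke Theorem~2.6 of Figalli \cite{Figalli-08} to pass between the linear Fokker--Planck equation and the martingale problem, identifying the marginals with $u^\varepsilon$. The paper's own justification is the one-sentence citation to Figalli via boundedness of the coefficient; your write-up simply spells out the verification of its hypotheses.
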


\subsubsection{Proof of Lemma \ref{lem:linear_pde}}
For any $v\in\LH$, the identity \eqref{eq:mode_nondegen_dif} ensures that{, for a.e. $t\in (0,T)$,} the bilinear mapping
$$
(u_1,u_2)\mapsto {\Ll_t}(u_1,u_2)={\int_{\er^d} {\frac{1}{2}\alpha_\varepsilon}(v(t,x))\left(\nabla_x u_1(x)\cdot \nabla_x u_2(x)\right)\,dx}
$$
{is continuous  on $H^1(\er^d)\times H^1(\er^d)$, since $\Ll_t(u_1,u_2)\leq \tfrac{1}{2}
(\sup_{0\leq r\leq \Vert v\Vert_{L^\infty}}{\alpha_\varepsilon}(r))\Vert\nabla_x u_1\Vert_{L^2(\er^d)}\Vert\nabla_xu_2\Vert_{L^2(\er^d)}$. Moreover, since $\Ll_t(u,u)\geq \frac{\epsilon}{2}\Vert\nabla_xu\Vert^2_{L^2(\er^d)}$, $\Ll_t$ satisfies the hypothesis of Theorem 1.1 in Lions~\cite[Chapter 4]{Lions-61} : $\Ll_t(u,u) + \frac{\epsilon}{2} \|u^2\|_{L^2(\er^d)} \geq    \frac{\epsilon}{2}  \|u^2\|_{H^1(\er^d)}$ for all $u\in H^1(\er^d)$.}  Applying Theorem 1.1 and Lemma 1.1 in \cite{Lions-61}, we deduce the existence of a
solution $\overline{u}^\varepsilon\in L^2((0,T);H^1(\er^d))$ to
\begin{align*}
&\int_{\er^d}\overline{u}^\varepsilon(T_0,x)f(T_0,x)\,dx-\int_{\er^d}u_0(x)f(0,x)\,dx\\
&\qquad=\int_{(0,T_0)\times\er^d}\overline{u}^\varepsilon(t,x) \frac{\partial f}{\partial t}(t,x) -{\frac{1}{2}\alpha_\varepsilon}(v(t,x))
\left(\nabla_x\overline{u}^\varepsilon(t,x)\cdot \nabla_x f(t,x)\right) \,dt\,dx,\\
&\qquad\qquad\qquad \forall\,0\leq T_0\leq T, \forall\,f\in\Cc^\infty_c([0,T_0]\times\er^d).
\end{align*}
The property of $u^\varepsilon\in \Cc([0,T];L^2(\er^d))$ can be proved in the same way as in \cite{Lions-61}, Theorem $2.1$, Chapter $4$.

The energy estimate \eqref{eq:linear_ineq_energy} is obtained by adapting some arguments of Lady\v{z}enskaja {\it et al.} \cite{LadSolUra-68}, p.~141--142.
For $0\leq T_0\leq T$, for $h>0$ such that $T_0+h\leq T$, and $\eta_h\in \Cc^\infty_c((-h,T_0)\times\er^d)$ such that $\eta_h(t,x)=0$ whenever $t\leq 0$ or $t\geq T_0-h$,
we define
$$
\phi(t,x)=\frac{1}{h}\int_{t-h}^t\eta_h(s,x)\,ds=\int_{-1}^0\eta_h(t+sh,x)\,ds.
$$
Plugging $\phi$ as a test function into the weak formulation \eqref{eq:mode_pde_linear} gives
\begin{align*}
u_0(x)f(0,x)\,dx\\
&0=\int_{(0,T_0)\times\er^d}\overline{u}^\varepsilon(t,x)\partial_t \phi(t,x) -{\frac{1}{2}\alpha_\varepsilon}(v(t,x))
(\nabla_x\overline{u}^\varepsilon(t,x)\cdot \nabla_x \phi(t,x) )\,dt\,dx.
\end{align*}
Since $\partial_t\phi(t,x)=\frac{1}{h}(\eta_h(t,x)-\eta_h(t-h,x))$,
\begin{align*}
\int_{(0,T_0)\times\er^d}\overline{u}^\varepsilon(t,x)\partial_t \phi(t,x)\,dt\,dx
&=\frac{1}{h}\left(\int_{(0,T_0)\times\er^d}\overline{u}^\varepsilon(t,x)\eta_h(t,x)\,dt\,dx-\int_{(0,T_0)\times\er^d}\overline{u}^\varepsilon(t,x)\eta_h(t-h,x)\,dt\,dx\right)\\
&=\frac{1}{h}\left(\int_{(0,T_0)\times\er^d}\overline{u}^\varepsilon(t,x)\eta_h(t,x)\,dt\,dx-\int_{(0,T_0)\times\er^d}\overline{u}^\varepsilon(t+h,x)\eta_h(t,x)\,dt\,dx\right)\\
&=\int_{(0,T_0)\times\er^d}\partial_t\overline{u}^\varepsilon_h(t,x)\eta_h(t,x)\,dt\,dx,
\end{align*}
for $\overline{u}^\varepsilon_h(t,x)=\frac{1}{h}\int_{t}^{t+h}\overline{u}^\varepsilon(s,x)\,ds$.
In the same manner, we have
\begin{align*}
\int_{(0,T_0)\times\er^d}{\frac{1}{2}\alpha_\varepsilon}(v(t,x))(\nabla_x\overline{u}^\varepsilon(t,x)\cdot \nabla_x \phi(t,x)) \,dt\,dx
=\int_{(0,T_0)\times\er^d}\frac{1}{2}\left({\alpha_\varepsilon}(v)\nabla_x\overline{u}^\varepsilon\right)_h(t,x)\cdot \nabla_x \eta_h(t,x) \,dt\,dx
\end{align*}
where
$$
\left({\alpha_\varepsilon}(v)\nabla_x\overline{u}^\varepsilon\right)_h(t,x):=
\frac{1}{h}\int_{t}^{t+h}{\alpha_\varepsilon}(v(s,x))\nabla_x\overline{u}^\varepsilon(s,x)\,ds.
$$
Therefore
\begin{align*}
0=\int_{(0,T_0)\times\er^d}\partial_t\overline{u}^\varepsilon_h(t,x) \eta_h(t,x) +\frac{1}{2}\left({\alpha_\varepsilon}(v)\nabla_x\overline{u}^\varepsilon\right)_h(t,x)\cdot \nabla_x \eta_h(t,x) \,dt\,dx.
\end{align*}
Extending the previous equality from $\eta_h\in\Cc^{\infty}_c((0,T_0-h)\times\er^d)$ to $\eta\in L^{2}((0,T_0);H^1(\er^d))$ by density, it follows that
\begin{equation}
\label{proofst:linear_mode_pde_1}
0=\int_{(0,T_0)\times\er^d}\partial_t\overline{u}^\varepsilon_h(t,x) \eta(t,x) +\frac{1}{2}\left({\alpha_\varepsilon}(v)\nabla_x\overline{u}^\varepsilon\right)_h(t,x)\cdot \nabla_x \eta(t,x) \,dt\,dx.
\end{equation}
Next replacing $\eta=\overline{u}^\varepsilon_h$, we get
\begin{align*}
0&=\int_{(0,T_0)\times\er^d}\partial_t\overline{u}^\varepsilon_h(t,x) \overline{u}^\varepsilon_h(t,x) +\frac{1}{2}\left({\alpha_\varepsilon}(v)\nabla_x\overline{u}^\varepsilon\right)_h(t,x)\cdot \nabla_x \overline{u}^\varepsilon_h(t,x) \,dt\,dx\\
&=\int_{\er^d}\left(\overline{u}^\varepsilon_h(T_0,x)\right)^2\,dx-\int_{\er^d}\left(\overline{u}^\varepsilon_h(0,x)\right)^2\,dx+
\int_{(0,T_0)\times\er^d} \frac{1}{2}\left({\alpha_\varepsilon}(v)\nabla_x\overline{u}^\varepsilon\right)_h(t,x)\cdot \nabla_x \overline{u}^\varepsilon_h(t,x) \,dt\,dx.
\end{align*}
Since
\begin{align*}
&\left|\int_{\er^d} \left(\overline{u}^\varepsilon_h(T_0,x)\right)^2\,dx-\int \left(\overline{u}^\varepsilon(T_0,x)\right)^2\,dx\right|\\
&=\left|\int_{\er^d} \left(\int_0^1\left(\overline{u}^\varepsilon(T_0+hs,x)\,ds-\overline{u}^\varepsilon(T_0,x)\right)\right)
\left(\int_0^1\left(\overline{u}^\varepsilon(T_0+hs,x)\,ds+\overline{u}^\varepsilon(T_0,x)\right)\right)\,dx\right|\\
&\qquad\leq \sqrt{\int_0^1\Vert \overline{u}^\varepsilon(T_0+hs)-\overline{u}^\varepsilon(T_0)\Vert^2_{L^2(\er^d)}\,ds}
\sqrt{\int_0^1\Vert \overline{u}^\varepsilon(T_0+hs)+\overline{u}^\varepsilon(T_0)\Vert^2_{L^2(\er^d)}\,ds},
\end{align*}
we have
\begin{equation}\label{proofst:linear_mode_pde_2}
\lim_{h\rightarrow 0}\int_{\er^d}\left(\overline{u}^\varepsilon_h(T_0,x)\right)^2\,dx=\int_{\er^d}\left(\overline{u}^\varepsilon(T_0,x)\right)^2\,dx.
\end{equation}
Similarly,
$$
\lim_{h\rightarrow 0}\int_{\er^d}\left(\overline{u}^\varepsilon_h(0,x)\right)^2\,dx=\int_{\er^d}\left(\overline{u}^\varepsilon_0(x)\right)^2\,dx,
$$
and
$$
\lim_{h\rightarrow 0}\int_{(0,T_0)\times\er^d} \left({\frac{1}{2}\alpha_\varepsilon}(v)\nabla_x\overline{u}^\varepsilon\right)_h(t,x)\cdot \nabla_x \overline{u}^\varepsilon_h(t,x) \,dt\,dx=\int_{(0,T_0)\times\er^d} {\frac{1}{2}\alpha_\varepsilon}(v(t,x))\left|\nabla_x \overline{u}^\varepsilon_h(t,x)\right|^2 \,dt\,dx,
$$
From which we deduce \eqref{eq:linear_ineq_energy}.

The non-negativeness of $\overline{u}^\varepsilon$ and \eqref{eq:linear_ineq_Linfini} follows from comparison principles:  since $\overline{u}^\varepsilon$ is in $L^2((0,T);H^1(\er^d))$, its negative part $(\overline{u}^\varepsilon)^-=\max(0,-\overline{u}^\varepsilon))$ is also in $L^2((0,T);H^1(\er^d))$ and
$\nabla_x(\overline{u}^\varepsilon)^-=-\nabla_x\overline{u}^\varepsilon\ind_{\{\overline{u}^\varepsilon\leq 0\}}$. Taking $\eta=(\overline{u}^\varepsilon)^-$  in  \eqref{proofst:linear_mode_pde_1} yields
\begin{align*}
0&=\int_{(0,T_0)\times\er^d}\partial_t\overline{u}^\varepsilon_h(t,x)(\overline{u}^\varepsilon)^-(t,x) +\frac{1}{2}\left({\alpha_\varepsilon}(v)\nabla_x\overline{u}^\varepsilon\right)_h(t,x)\cdot \nabla_x (\overline{u}^\varepsilon)^-(t,x) \,dt\,dx\\
&=-\int_{\er^d}\left(\left(\overline{u}^\varepsilon_h\right)^-(T_0,x)\right)^2\,dx+
\int_{\er^d}\left(\left(\overline{u}^\varepsilon_h\right)^-(0,x)\right)^2\,dx+
\int_{(0,T_0)\times\er^d}\frac{1}{2}\left({\alpha_\varepsilon}(v)\nabla_x\overline{u}^\varepsilon\right)_h(t,x)\cdot \nabla_x (\overline{u}^\varepsilon)^-(t,x) \,dt\,dx.
\end{align*}
Replicating the same arguments as for \eqref{proofst:linear_mode_pde_2},
\begin{align*}
\lim_{h\rightarrow 0}\int_{\er^d}\left(\left(\overline{u}^\varepsilon_h\right)^-(T_0,x)\right)^2\,dx&=\int_{\er^d}\left(\left(\overline{u}^\varepsilon\right)^-(T_0,x)\right)^2dx,\\
\lim_{h\rightarrow 0}\int_{\er^d}\left(\left(\overline{u}^\varepsilon_h\right)^-(0,x)\right)^2\,dx&=\int_{\er^d}\left(\left(u_0\right)^-(x)\right)^2dx,\\
\lim_{h\rightarrow 0}\int_{(0,T_0)\times\er^d}\frac{1}{2}\left({\alpha_\varepsilon}(v)\nabla_x\overline{u}^\varepsilon\right)_h(t,x)\cdot \nabla_x \left(\overline{u}^\varepsilon_h\right)^-(t,x) \,dt\,dx
&=\int_{(0,T_0)\times\er^d}\frac{1}{2}{\alpha_\varepsilon}(v(t,x))\nabla_x\overline{u}^\varepsilon(t,x)\cdot \nabla_x \left(\overline{u}^\varepsilon_h\right)^-(t,x) \,dt\,dx\\
&=-\int_{(0,T_0)\times\er^d}\frac{1}{2}{\alpha_\varepsilon}(v(t,x))\left|\nabla_x \left(\overline{u}^\varepsilon_h\right)^-(t,x)\right|^2 \,dt\,dx.
\end{align*}
Consequently,
\begin{align*}
0=-\int_{\er^d}\left(\left(\overline{u}^\varepsilon\right)^-(T_0,x)\right)^2\,dx+
\int_{\er^d}\left(\left(u_0\right)^-(x)\right)^2\,dx-
\int_{(0,T_0)\times\er^d}{\frac{1}{2}\alpha_\varepsilon}(v(t,x))\left|\nabla_x \left(\overline{u}^\varepsilon_h\right)^-(t,x)\right|^2 \,dt\,dx.
\end{align*}
Since $u_0\geq 0$ and ${\alpha_\varepsilon}$ is non-negative, we deduce immediately that $\left(\overline{u}^\varepsilon\right)^-=0$ on $(0,T)\times\er^d$.

For the proof of \eqref{eq:linear_ineq_Linfini}, we proceed similarly: we set $K=\Vert u_0\Vert_{L^\infty}$.  Then the positive part of $\overline{u}^\varepsilon(t,x)-K$,
$$
\left(\overline{u}^\varepsilon(t,x)-K\right)^+
$$
is in $L^2((0,T);H^1(\er^d))$. This statement simply follows from the  observation  that
$$
\left|\left(\overline{u}^\varepsilon(t,x)-K\right)^+\right|^2
\leq \left|\overline{u}^\varepsilon(t,x)\right|^2,\,\,\mbox{a.e. on}\,(0,T)\times\er^d,
$$
and that
$$
\nabla_x\left(\overline{u}^\varepsilon-K\right)^+=\nabla_x\overline{u}^\varepsilon\1_{\{\overline{u}^\varepsilon-K\geq 0\}}.
$$
By plugging $\eta=\left(\overline{u}^\varepsilon-K\right)^+$ into \eqref{proofst:linear_mode_pde_1}, we get
\begin{align*}
0&=\int_{(0,T_0)\times\er^d}\partial_t\overline{u}^\varepsilon_h(t,x) \left(\overline{u}^\varepsilon(t,x)-K\right)^+(t,x) +\frac{1}{2}\left({\alpha_\varepsilon}(v)\nabla_x\overline{u}^\varepsilon\right)_h(t,x)\cdot \nabla_x \left(\overline{u}^\varepsilon(t,x)-K\right)^+(t,x) \,dt\,dx.
\end{align*}
Taking the limit $h\rightarrow 0$ of the above expression  yields
\begin{align*}
0&=\int_{\er^d}\left(\left(\overline{u}^\varepsilon(t,x)-K\right)^+\right)^2(t,x)\,dx-\int_{\er^d}\left(\left(u_0(x)-K\right)^+\right)^2(t,x)\,dx\\
&\quad +\int_{(0,T_0)\times\er^d}{\frac{1}{2}\alpha_\varepsilon}(v(t,x))\left|\nabla_x \left(\overline{u}^\varepsilon(t,x)-K\right)^+\right|^2(t,x) \,dt\,dx.
\end{align*}
Since $\left(u_0(x)-K\right)^+=0$ for a.a. $x\in\er^d$, \eqref{eq:linear_ineq_Linfini} follows.
\subsubsection{Proof of Lemma \ref{lem:WellposednessSmoothpde}}

The existence of a weak solution to the non linear PDE  \eqref{eq:mode_pde_smooth} will be deduced from the following fixed point theorem that we apply to the mapping $A:v\in \Xx\mapsto A(v)\in \Xx$, for
\[
\Xx=\left\{v\in \LH\,:\,\,v\geq 0\,\text{ a.e. on}\,(0,T)\times\er^d\,\text{and}\,\Vert v\Vert_{L^\infty((0,T)\times\er^d)}\leq \Vert u_0\Vert_{L^\infty(\er^d)}\right\}
\]
equipped with the $\Vert\,\Vert_{\LH}$-norm, and where $A$ assigns to any nonnegative $v\in \Xx$, the weak solution
 $A(v)$ to the linear PDE \eqref{eq:mode_pde_linear} given by Lemma \ref{lem:linear_pde} with the estimates \eqref{eq:linear_ineq_Linfini} and \eqref{eq:linear_ineq_energy}.

\begin{theorem}[Schaefer's fixed point Theorem, see \cite{Evans-97}, Theorem 4, Chapter $9$, Section $2$]
 Let $\Xx$ be a Banach space and $A:\Xx\rightarrow \Xx$ be a continuous and compact mapping such that the set
\begin{align*}
 {\Ii=}\left\{u\in \Xx\mbox{ s.t. there exists}\,0\leq \lambda\leq 1\,\text{with }\,u=\lambda A(u)\right\}
\end{align*}
 is bounded in $\Xx$. Then $A$ has a fixed point in $\Xx$.
\end{theorem}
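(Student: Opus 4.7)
My plan is to reduce Schaefer's theorem to Schauder's fixed point theorem (every continuous self-map of a nonempty closed convex subset of a Banach space whose image is relatively compact has a fixed point) via a radial retraction, using the boundedness hypothesis on $\Ii$ only at the very end to rule out a ``boundary'' configuration. The key idea is that while $A$ need not map a ball into itself, composing it with the radial projection onto a sufficiently large ball produces a map to which Schauder applies, and the hypothesis on $\Ii$ forces any fixed point of the composed map to already lie strictly inside the ball.

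Concretely, since $\Ii$ is bounded in $\Xx$, fix $M>0$ with $\|u\|\leq M$ for every $u\in\Ii$, and choose a radius $R>M$. Let $K:=\{x\in\Xx:\|x\|\leq R\}$, which is nonempty, closed and convex. Introduce the radial retraction $r\colon \Xx\to K$ defined by $r(x)=x$ if $\|x\|\leq R$ and $r(x)=Rx/\|x\|$ if $\|x\|>R$. A short computation shows $r$ is continuous (in fact $2$-Lipschitz) with $\|r(x)\|\leq R$. Define $B:=r\circ A\colon K\to K$. Then $B$ is continuous, and since $A$ sends bounded sets to relatively compact sets, $A(K)$ is relatively compact; the continuous image $B(K)=r(A(K))$ is therefore relatively compact as well.

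Schauder's fixed point theorem applied to $B$ on $K$ yields some $u\in K$ with $u=r(A(u))$. I would then split into two cases to show that in fact $A(u)=u$. If $\|A(u)\|\leq R$, then $r(A(u))=A(u)$ and we are done immediately. Otherwise $\|A(u)\|>R$, in which case $u=\lambda A(u)$ with $\lambda:=R/\|A(u)\|\in(0,1)$; by definition of $\Ii$ this gives $u\in\Ii$, hence $\|u\|\leq M$. But at the same time $\|u\|=\|r(A(u))\|=R>M$, a contradiction. Therefore the second case is impossible and $u$ is a fixed point of $A$.

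The only genuinely delicate step is the second one: combining the hypothesis on $\Ii$ with the choice $R>M$ so as to exclude the boundary possibility. Everything else is routine once one grants Schauder's theorem as a black box; the Schauder step itself relies on finite-dimensional approximation (Brouwer's theorem on convex hulls of finite $\epsilon$-nets in the relatively compact image $B(K)$), which is standard and need not be reproduced here.
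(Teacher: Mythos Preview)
Your proof is correct. Note, however, that the paper does not provide its own proof of Schaefer's theorem: the statement is quoted from Evans \cite{Evans-97} and used as a black box in the proof of Lemma \ref{lem:WellposednessSmoothpde}. Your argument---reducing to Schauder's theorem via the radial retraction onto a ball of radius $R>M$ and using the boundedness of $\Ii$ to rule out the boundary case---is precisely the standard proof, and in fact is essentially the one given in the cited reference.
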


\paragraph{The continuity of $A$. } Let $\{v_n\}_{n}$ converg{e} to $v_\infty$ in $L^2((0,T);H^1(\er^d))$. Then, since $A(v_n)$ and $A(v_\infty)$ are weak solution to \eqref{eq:mode_pde_linear} {endowed with} the diffusion coefficient $\sigma_\varepsilon(v_n)$ and $\sigma_\varepsilon(v_\infty)$ respectively, we have
\[
0=\int_{(0,T)\times\er^d}\left(A(v_n)-A(v_\infty)\right)\partial_t f-\left({\frac{1}{2}\alpha_\varepsilon}(v_n)\nabla_xA(v_n)-{\frac{1}{2}\alpha_\varepsilon}(v_\infty)\nabla_xA(v_\infty)\right)\nabla_x f,
\]
 for all $f\in\Cc^\infty_c((0,T)\times\er^d)$.
 Replicating the arguments for the energy estimate \eqref{eq:linear_ineq_energy} in Lemma \ref{lem:linear_pde}, we deduce that
 \begin{align*}
 &\int_{\er^d} \left|A(v_n)(T_0)-A(v_\infty)(T_0) \right|^2+\int_{(0,T_0)\times\er^d} {\frac{1}{2}\alpha_\varepsilon}(v_n)\left|\nabla_xA(v_n)-\nabla_xA(v_\infty) \right|^2\\
 &=\int_{(0,T_0)\times\er^d}\frac{1}{2}\left({\alpha_\varepsilon}(v_n)-{\alpha_\varepsilon}(v_\infty)\right)\left(\nabla_xA(v_n)-\nabla_xA(v_\infty) \right)\cdot \nabla_xA(v_n)\\
 &\qquad\leq\frac{1}{\varepsilon} \int_{(0,T_0)\times\er^d}\frac{1}{2}\left|{\alpha_\varepsilon}(v_n)-{\alpha_\varepsilon}(v_\infty)\right|^2 \left|\nabla_xA(v_n)\right|^2
 +\frac{\varepsilon}{4}\int_{(0,T_0)\times\er^d}\left|\nabla_xA(v_n)-\nabla_xA(v_\infty) \right|^2.
 \end{align*}
 Since $v_n\rightarrow v_\infty$ in $L^2((0,T)\times\er^d)$, there exists a subsequence $\{v_{n_k}\}_k$ such that $v_{n_k}\rightarrow v_\infty$ a.e. on $(0,T)\times\er^d$ and such that $\sup_k|v_{n_k}|$ is in $L^2((0,T)\times\er^d)$. Replacing $v_n$ by $v_{n_k}$ in the preceding inequality and applying the Lebesgue's dominated convergence theorem, we deduce that $\lim_k A(v_{n_k})=A(v_\infty)$ in $\LH$.
  With the same reasoning, for any subsequence of $\{A(v_{n})\}_n$, we can extract a subsequence which converges to $A(v_\infty)$. {Since $\Xx$ is a closed subset of $\LH$, t}his implies the continuity of $A$.

\paragraph{The compactness of $A$. } Owing to Lemma \ref{lem:linear_pde}, for any converging sequence $\{v_n\}_n$ in {$\Xx$, }
$\{A(v_n)\}_n$ is a bounded sequence in $\LH$. We can then extract a subsequence $\{A(v_{n_k})\}_k$  converging to some limit $A_\infty$ {in} the weak topology of $\LH$. 
In particular, since $\sup_k\Vert A(v_{n_k})\Vert_{L^2((0,T)\times\er^d)}$ is finite, $\Vert {\nabla_x} A(v_{n_k})- A_\infty\Vert_{L^2((0,T)\times\er^d)}$ tends to $0$ as $k$ tends to $\infty$. {The uniform bound \eqref{eq:linear_ineq_Linfini} naturally holds for $v_\infty$.}

\paragraph{The  boundedness of {$\Ii$}. }  Finally, let us consider the set
   $$
  \Ii:=\left\{u\in {\Xx}\,\mbox{ s.t. there exists}\,0\leq \lambda\leq 1\,\text{ with }\,u=\lambda A(u)\right\}.
  $$
  Excluding the trivial case $\lambda=0$, one can check that for all $u_\lambda\in {\Xx}$ such that $u_\lambda=\lambda A(u_\lambda)$, $u_\lambda$ is {the} $\LH$-weak solution to
  \begin{equation*}
\left\{
\begin{aligned}
&\frac{1}{\lambda}\frac{\partial u_\lambda}{\partial t } - \frac{1}{2}\triangle (\sigma_\varepsilon^2(u_\lambda) \frac{u_\lambda}{\lambda}) = 0,{\text{ on }(0,T)\times\er^d},
  \\
&\frac{1}{\lambda}u_\lambda({0,x})=u_0{(x)},~{x\in\er^d},
\end{aligned}
\right.
\end{equation*}
given as in Lemma \ref{lem:linear_pde}. Equivalently,
  \begin{align*}
&\int_{\er^d}u_\lambda(T_0,x)f(T_0,x)\,dx-\lambda\int_{\er^d}u_0(x)f(0,x)\,dx\\
&\qquad=\int_{(0,T_0)\times\er^d} u_\lambda(t,x) \partial_tf(t,x) -{\frac{1}{2}\alpha_\varepsilon}(u_\lambda)\nabla_x u_\lambda(t,x)\cdot \nabla_x f(t,x) \,dt\,dx.
\end{align*}
  The energy estimate \eqref{eq:linear_ineq_energy} from Lemma \ref{lem:linear_pde} then ensures that $\Vert u_\lambda\Vert^2_{L^2((0,T)\times\er^d)}\leq \lambda^2 T\Vert u_0\Vert^2_{L^2(\er^d)}$ from which we conclude on the boundedness of {$\Ii$}.

\medskip
The Schaefer Theorem ensures the existence of a $\LH$-weak solution to \eqref{eq:mode_pde_smooth}, for which the $L^2$ (and $L^\infty$) estimate established in Lemma \ref{lem:linear_pde} still hold true.

For the estimate \eqref{eq:Vasqueztype_ineq}, we replicate the proof arguments of Vasquez \cite{Vasquez-06} (see Chapter 5).  Owing to assumptions \hypmodo
   and \hypmodi,
   \begin{align*}
   \int_{\er^d} \Psi_\varepsilon(u_0(x))\,dx&=\int_{\er^d} \int_0^{u_0(x)}\Phi_\varepsilon(r)\,dr\,dx\leq \frac{1}{2}\left({\sup_{0\leq r\leq \Vert u_0\Vert_{L^\infty(\er^d)}}(\sigma(r))^2}+\varepsilon\right)
   \Vert u_0\Vert^2_{L^2(\er^d)}.
   \end{align*}
   Replicating the proof arguments of \eqref{proofst:linear_mode_pde_1}, for all $\eta\in\Cc^\infty_c((0,T)\times\er^d)$, for $h>0$, $T_0>0$ such that $T_0+h\leq T$,
   \begin{equation}\label{proofst:nonlinear_mode_pde_1}
0=\int_{(0,T_0)\times\er^d}\partial_tu^\varepsilon_h(t,x) \eta(t,x) +\frac{1}{2}\left({\alpha_\varepsilon}(u^\varepsilon)\nabla_x u^\varepsilon\right)_h(t,x)\cdot \nabla_x \eta(t,x) \,dt\,dx,
\end{equation}
with  $u^\varepsilon_h(t,x):=\frac{1}{h}\int_{t}^{t+h}u^\varepsilon(s,x)\,ds$, and
$$
\left({\alpha_\varepsilon}(u^\varepsilon)\nabla_x u^\varepsilon\right)_h(t,x):=\frac{1}{h}\int_{t}^{t+h}{\alpha_\varepsilon}(u^\varepsilon(s,x))\nabla_x u^\varepsilon(s,x)\,ds.
$$
Observing that
$$
\Vert \Phi_\varepsilon(u^\varepsilon_h)\Vert_{L^2((0,T)\times\er^d)}=\Vert \left(\sigma^2(u^\varepsilon)+\varepsilon\right)u^\varepsilon_h\Vert_{L^2((0,T)\times\er^d)}\leq
\frac{1}{2}\left( {\sup_{0\leq r\leq \Vert u_0\Vert_{L^\infty(\er^d)}}(\sigma(r))^2}+\varepsilon\right)
   \Vert u^\varepsilon\Vert_{L^2((0,T)\times\er^d)},
$$
and that
\begin{align*}
\Vert \nabla_x\Phi_\varepsilon(u^\varepsilon_h)\Vert^{{2}}_{L^{2}((0,T)\times\er^d)}&=\Vert \Phi'_\varepsilon(u^\varepsilon_h) \nabla_xu^\varepsilon_h\Vert^{{2}}_{L^{2}((0,T)\times\er^d)}=\Vert {\alpha_\varepsilon}(u^\varepsilon)\nabla_xu^\varepsilon_h\Vert^{{2}}_{L^{2}((0,T)\times\er^d)}\\
&\leq\left(\Vert {\alpha(u^\epsilon)}\Vert^2_{L^\infty(\er^d)}+\varepsilon^{{2}}\right)
   \Vert \nabla_x u^\varepsilon\Vert^2_{L^2((0,T)\times\er^d)},
\end{align*}
$\Phi_\varepsilon(u^\varepsilon_h)$ is in $\LH$. Taking $\eta=\Phi_\varepsilon(u^\varepsilon_h)$ in \eqref{proofst:nonlinear_mode_pde_1}
\begin{align*}
0&=\int_{(0,T_0)\times\er^d}\partial_tu^\varepsilon_h(t,x) \Phi_\varepsilon(u^\varepsilon_h)(t,x) +\frac{1}{2}\left({\alpha_\varepsilon}(u^\varepsilon)\nabla_x u^\varepsilon\right)_h(t,x)\cdot \nabla_x \Phi_\varepsilon(u^\varepsilon_h)(t,x) \,dt\,dx\\
&= \int_{(0,T_0)\times\er^d}\partial_t\Psi_\varepsilon(u^\varepsilon_h)(t,x)+\frac{1}{2}\int_{(0,T_0)\times\er^d}\left(\nabla_x \Phi_\varepsilon(u^\varepsilon_h)\right)_h(t,x)\cdot \nabla_x \Phi_\varepsilon(u^\varepsilon_h)(t,x) \,dt\,dx\\
&=\Vert \Psi_\varepsilon(u^\varepsilon_h)(T_0)\Vert_{L^1(\er^d)}-\Vert \Psi_\varepsilon(u^\varepsilon_h)(0)\Vert_{L^1(\er^d)}+\frac{1}{2}\int_{(0,T_0)\times\er^d}\left(\nabla_x \Phi_\varepsilon(u^\varepsilon_h)\right)_h(t,x)\cdot \nabla_x \Phi_\varepsilon(u^\varepsilon_h)(t,x) \,dt\,dx.
\end{align*}
Taking the limit $h\rightarrow 0$ it follows that
\begin{align*}
0&=\Vert \Psi_\varepsilon(u^\varepsilon)(T_0)\Vert_{L^{{1}}(\er^d)}-\Vert \Psi_\varepsilon(u_0)\Vert_{L^{{1}}(\er^d)}+\frac{1}{2}\int_{(0,T_0)\times\er^d}\left|\nabla_x \Phi_\varepsilon(u^\varepsilon_h)\right|^2(t,x)\,dt\,dx.
\end{align*}
{Finally, since $u^\epsilon$ is a $L^2((0,T);H^1(\er^d))$-solution to \eqref{eq:mode_pde_smooth}, the property $u^\epsilon\in\Cc([0,T];L^2(\er^d))$ can be again obtained by following \cite{Lions-61}, Theorem 2.1, Chapter $4$.} This ends the proof.

\subsection{Existence result for \eqref{eq:moderated}}

The existence result in Theorem \ref{thm:moderatedexist} will be deduced from the asymptotic behavior  (up to a subsequence extraction) of the solution to \eqref{eq:mode_sde_smooth} as $\varepsilon\rightarrow 0$. The precise result is the following:
\begin{proposition}\label{prop:limit_nonlinear_smooth_sdepde} Assume  \hypmodo, \hypmodi and \hypmodii.  Consider $(X^\varepsilon_t;\,t\geq 0)$ solution to \eqref{eq:mode_sde_smooth} given by Proposition \ref{prop:WellposednessSmoothsde}. The sequence $\{(P^\varepsilon,u^\varepsilon)\}_{\varepsilon>0}$, defined by
$$
P^\varepsilon=\law(X^\varepsilon_t;\,0\leq t\leq T),
$$
and $u^\varepsilon$ given as in Lemma \ref{lem:WellposednessSmoothpde},  admits a weakly converging subsequence $\{(P^{\varepsilon_k},u^{\varepsilon_k})\}_k$ such that $u^0 = \lim_ku^{\varepsilon_k}$ in $L^2((0,T)\times\er^d)$,  is a $L^2((0,T)\times\er^d)$-weak solution to
\begin{align}\label{eq:mode_pde_nonlinear}
\left\{
\begin{aligned}
&\frac{\partial u}{\partial t } - \frac{1}{2}\triangle_x (\sigma^2(u) u) = 0,{\text{ on }(0,T)\times\er^d,}\\
&u({0,x})=u_0{(x)},~{x\in\er^d,}
\end{aligned}
\right.
\end{align}
 and $P^{0}=\lim_k P^{\varepsilon_k}$ is solution to the following martingale problem \textit{(MP)}: let $(x(t);\,0\leq t\leq T)$  denotes
 the canonical process on $\Cc([0,T];\er^d)$, we have
\begin{description}
\item[\textit{(MP)-(i)}] For all $0\leq t\leq T$, $P^0(x(t)\in\,dx)=u^0(t,x)\,dx$;

\item[\textit{(MP)-(ii)}] For all $f\in\Cc^2_c(\er^d)$,
$$
t\mapsto f(x(t))-f(x(0))-\frac{1}{2}\int_0^t \sigma^2(u^0(s,x))\triangle_x f(x(s))\,ds
$$
is a continuous martingale under $P^0$.
\end{description}
\end{proposition}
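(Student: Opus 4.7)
The strategy is to combine the uniform a priori estimates from Lemma~\ref{lem:WellposednessSmoothpde} with standard compactness arguments on both sides (PDE and SDE), then identify the limit PDE and martingale problem.

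\textbf{Step 1 (uniform bounds).} Under \hypmodii, $\alpha(r)\geq\eta>0$, so the Vasquez identity \eqref{eq:Vasqueztype_ineq} gives
\[
\eta^2\int_0^T\|\nabla_x u^\varepsilon(t)\|^2_{L^2(\er^d)}\,dt\;\leq\;\int_{(0,T)\times\er^d}|\nabla_x\Phi_\varepsilon(u^\varepsilon)|^2\;\leq\;2\!\int_{\er^d}\Psi_\varepsilon(u_0),
\]
which together with \eqref{eq:smoothednonlinear_ineq_Linfini} and $u_0\in L^2\cap L^\infty$ yields a bound on $\{u^\varepsilon\}$ in $L^\infty((0,T)\times\er^d)\cap L^2((0,T);H^1(\er^d))$ uniform in $\varepsilon$. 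Rewriting \eqref{eq:mode_pde_smooth} as $\partial_t u^\varepsilon=\tfrac12\nabla_x\!\cdot\!(\alpha_\varepsilon(u^\varepsilon)\nabla_x u^\varepsilon)$ with $\alpha_\varepsilon(u^\varepsilon)$ uniformly bounded on $[0,\|u_0\|_\infty]$ also gives $\|\partial_t u^\varepsilon\|_{L^2((0,T);H^{-1}(\er^d))}\leq C$. Tightness of $\{P^\varepsilon\}$ in $\mathcal{P}(\Cc([0,T];\er^d))$ follows from the uniform boundedness of $\sigma^2_\varepsilon(u^\varepsilon)$ and the second-moment hypothesis in \hypmodo\ via a standard Kolmogorov increment estimate on $X^\varepsilon$.

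\textbf{Step 2 (joint compactness).} An Aubin-Lions argument with $H^1\hookrightarrow L^2\hookrightarrow H^{-1}$ gives relative compactness of $\{u^\varepsilon\}$ in $L^2((0,T);L^2_{\loc}(\er^d))$; uniform control of tails at infinity, provided by $\sup_\varepsilon\sup_{t\leq T}\EE[|X^\varepsilon_t|^2]<\infty$ (from It\^o's formula applied to $|X^\varepsilon_t|^2$), upgrades this to relative compactness in $L^2((0,T)\times\er^d)$. Extracting a common subsequence, $u^{\varepsilon_k}\to u^0$ strongly in $L^2((0,T)\times\er^d)$ and a.e., and $P^{\varepsilon_k}\to P^0$ weakly in $\mathcal{P}(\Cc([0,T];\er^d))$.

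\textbf{Step 3 (limit PDE and marginals).} Passage to the limit in the $L^2$-weak formulation of \eqref{eq:mode_pde_smooth} is direct: by continuity of $\sigma^2$ and dominated convergence (using the uniform $L^\infty$ bound), $\sigma_{\varepsilon_k}^2(u^{\varepsilon_k})u^{\varepsilon_k}\to\sigma^2(u^0)u^0$ in $L^2$, so $u^0$ is an $L^2((0,T)\times\er^d)$-weak solution of \eqref{eq:mode_pde_nonlinear}. For \textit{(MP)-(i)}: each $P^{\varepsilon_k}_t$ has density $u^{\varepsilon_k}(t,\cdot)$; for a.e.\ $t$, $u^{\varepsilon_k}(t,\cdot)\to u^0(t,\cdot)$ in $L^2(\er^d)$, and weak convergence $P^{\varepsilon_k}\to P^0$ then forces $P^0_t(dx)=u^0(t,x)\,dx$ for a.e.\ $t$. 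The bound $\partial_t u^0\in L^2((0,T);H^{-1})$ combined with $u^0\in L^2((0,T);H^1)$ yields $u^0\in\Cc([0,T];L^2)$ (Lions-Magenes), and path-continuity of $t\mapsto P^0_t$ extends the identification to every $t\in[0,T]$.

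\textbf{Step 4 (martingale property, main obstacle).} Fix $f\in\Cc^2_c(\er^d)$, $0\leq s<t\leq T$, and a bounded continuous $\Ff_s$-measurable functional $\phi$ of the path. Since $P^{\varepsilon_k}$ solves the martingale problem for \eqref{eq:mode_sde_smooth},
\[
\Ee_{P^{\varepsilon_k}}\!\left[(f(x(t))-f(x(s)))\phi\right] \;=\; \tfrac12\int_s^t\Ee_{P^{\varepsilon_k}}\!\left[\sigma^2_{\varepsilon_k}(u^{\varepsilon_k}(r,x(r)))\,\triangle f(x(r))\,\phi\right]dr.
\]
The left-hand side converges to $\Ee_{P^0}[(f(x(t))-f(x(s)))\phi]$ by weak convergence. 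The difficulty is the right-hand side, since $u^\varepsilon$ is only a.e.\ defined and $\omega\mapsto\sigma^2_{\varepsilon_k}(u^{\varepsilon_k}(r,\omega(r)))$ is not continuous on $\Cc([0,T];\er^d)$. I would handle this by mollification: pick a spatial mollifier $\rho_\delta$ and replace $u^{\varepsilon_k}(r,y)$ and $u^0(r,y)$ by their convolutions in $y$, which are continuous in $(r,y)$. For fixed $\delta$, $\omega\mapsto\sigma^2(u^0*\rho_\delta(r,\omega(r)))$ is continuous and bounded, so weak convergence of $P^{\varepsilon_k}$ passes. The mollification error is controlled uniformly in $k$ by the density identification
\[
\Ee_{P^{\varepsilon_k}}\!\left[|\sigma^2_{\varepsilon_k}(u^{\varepsilon_k})-\sigma^2_{\varepsilon_k}(u^{\varepsilon_k}*\rho_\delta)|(r,x(r))\right] = \int_{\er^d}|\sigma^2_{\varepsilon_k}(u^{\varepsilon_k})-\sigma^2_{\varepsilon_k}(u^{\varepsilon_k}*\rho_\delta)|(r,y)\,u^{\varepsilon_k}(r,y)\,dy,
\]
which tends to $0$ as $\delta\to 0$ uniformly in $k$, by uniform continuity of $\sigma^2$ on $[0,\|u_0\|_\infty]$ together with uniform $L^2$-closeness $u^{\varepsilon_k}*\rho_\delta\to u^{\varepsilon_k}$. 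Taking $\varepsilon_k\to 0$ first and then $\delta\to 0$ yields \textit{(MP)-(ii)}. This double-limit passage is the main hurdle of the proof; it crucially exploits both the density description of the time marginals and the uniform $L^2\cap L^\infty$ bounds on $\{u^\varepsilon\}$.
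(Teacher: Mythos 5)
Your proof follows essentially the same route as the paper's: uniform $L^\infty$ and $L^2((0,T);H^1)$ bounds via \eqref{eq:smoothednonlinear_ineq_Linfini}, \eqref{eq:smoothednonlinear_ineq_energy}, \eqref{eq:Vasqueztype_ineq}, tightness of $\{P^\varepsilon\}$, passage to the limit in the weak PDE formulation, and a mollification argument (equivalent to the paper's $\phi_\beta$, $I_1^{\varepsilon,\beta}+I_2^{\varepsilon,\beta}+I_3^\beta$ decomposition, using the time-marginal density to convert $\Ee_{P^{\varepsilon_k}}$ into a Lebesgue integral) to handle the discontinuity of $\omega\mapsto\sigma^2(u(r,\omega(r)))$ in \textit{(MP)-(ii)}. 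The one place you go beyond what the paper writes is Step~2: the paper infers strong $L^2((0,T)\times\er^d)$-convergence directly from the uniform $L^2((0,T);H^1)$ bound, which on an unbounded spatial domain is not automatic; your Aubin--Lions argument (adding the $\partial_t u^\varepsilon$ bound in $L^2((0,T);H^{-1})$ and controlling spatial tails via the propagated second moment together with the $L^\infty$ bound) supplies the justification that the paper leaves implicit, so your version is, if anything, more complete.
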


From Lemma \ref{lem:WellposednessSmoothpde}, the $L^2((0,T)\times\er^d)$-convergence of   $u^{\varepsilon}$ to $u^0$ ensures that
\[
\Vert u^0\Vert_{L^\infty((0,T)\times\er^d)}<\infty,\,\mbox{ and }\,\int_{\er^d} \left|\nabla\Phi_\varepsilon(u^0(t,x))\right|^2\,dt\,dx<\infty.
\]
In particular under \hypmodii, the control of $\Vert \nabla\Phi_\varepsilon(u^0(t,x))\Vert_{L^2((0,T)\times\er^d)}$ yields to the estimate:
\[
\eta\Vert\nabla_x u^0\Vert_{L^2((0,T)\times\er^d)}\leq \Vert\alpha_{\varepsilon}(u^0)\nabla_x u^0\Vert_{L^2((0,T)\times\er^d)}\leq \Vert\nabla_x \Phi_\varepsilon(u^0)\Vert_{L^2((0,T)\times\er^d)}<\infty.
\]
Therefore
\begin{corollary}\label{coro:limit_nonlinear_smooth_sdepde} The time marginal densities $u^0(t)$ of $(X^0_t;\,0\leq t\leq T)$ given in Proposition \ref{prop:limit_nonlinear_smooth_sdepde} are in $L^\infty((0,T)\times\er^d)\cap L^2((0,T);H^1(\er^d))$.
\end{corollary}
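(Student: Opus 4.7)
The plan is to derive both bounds by extracting them from the uniform estimates on the approximations $u^\varepsilon$ provided by Lemma \ref{lem:WellposednessSmoothpde}, and then passing to the limit via the convergence given by Proposition \ref{prop:limit_nonlinear_smooth_sdepde}. Fix the subsequence $\{\varepsilon_k\}$ along which $u^{\varepsilon_k}\to u^0$ in $L^2((0,T)\times\er^d)$.

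For the $L^\infty$ bound, the uniform estimate \eqref{eq:smoothednonlinear_ineq_Linfini} gives $\Vert u^{\varepsilon_k}\Vert_{L^\infty((0,T)\times\er^d)}\leq \Vert u_0\Vert_{L^\infty(\er^d)}$. I would extract a further subsequence along which $u^{\varepsilon_k}\to u^0$ almost everywhere on $(0,T)\times\er^d$, and invoke the lower semicontinuity of the $L^\infty$-norm under a.e. convergence to conclude that $\Vert u^0\Vert_{L^\infty((0,T)\times\er^d)}\leq \Vert u_0\Vert_{L^\infty(\er^d)}$.

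For the $L^2((0,T);H^1(\er^d))$ bound, the starting point is the energy identity \eqref{eq:Vasqueztype_ineq}. Since $u_0\in L^\infty\cap L^2$ and $\sigma$ is continuous on $[0,\Vert u_0\Vert_{L^\infty(\er^d)}]$ by \hypmodi, one has
\[
\int_{\er^d}\Psi_{\varepsilon_k}(u_0(x))\,dx\leq \tfrac{1}{2}\bigl(\sup_{0\leq r\leq \Vert u_0\Vert_{L^\infty(\er^d)}}\sigma^2(r)+\varepsilon_k\bigr)\Vert u_0\Vert^2_{L^2(\er^d)},
\]
which is uniformly bounded in $k$. Hence $\{\nabla_x\Phi_{\varepsilon_k}(u^{\varepsilon_k})\}_k$ is uniformly bounded in $L^2((0,T)\times\er^d)$. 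Using the pointwise identity $\nabla_x\Phi_{\varepsilon_k}(u^{\varepsilon_k})=\alpha_{\varepsilon_k}(u^{\varepsilon_k})\nabla_x u^{\varepsilon_k}$ together with the strict lower bound $\alpha_{\varepsilon_k}(r)\geq \eta>0$ supplied by \hypmodii, I would deduce the uniform estimate
\[
\Vert \nabla_x u^{\varepsilon_k}\Vert_{L^2((0,T)\times\er^d)}\leq \eta^{-1}\Vert \nabla_x\Phi_{\varepsilon_k}(u^{\varepsilon_k})\Vert_{L^2((0,T)\times\er^d)}.
\]
By weak compactness, a further subsequence $\nabla_x u^{\varepsilon_{k_j}}$ converges weakly in $L^2((0,T)\times\er^d)$; since $u^{\varepsilon_{k_j}}\to u^0$ strongly in $L^2$, the usual identification of the distributional gradient with the weak $L^2$-limit gives $\nabla_x u^0\in L^2((0,T)\times\er^d)$. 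Combined with $u^0\in L^2((0,T)\times\er^d)$, this yields $u^0\in L^2((0,T);H^1(\er^d))$.

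The only delicate point is bridging from the bound on $\nabla_x\Phi_{\varepsilon_k}(u^{\varepsilon_k})$ to a bound on $\nabla_x u^{\varepsilon_k}$: this rests precisely on the uniform ellipticity $\alpha\geq \eta>0$ in \hypmodii, which is what distinguishes the assumption from the weakened version \hypmodiiweaken, where the argument would fail at points where $\alpha$ vanishes and one can only control $\nabla_x\Phi(u^0)$ rather than $\nabla_x u^0$ itself.
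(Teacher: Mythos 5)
Your proposal is correct and follows essentially the same route as the paper: you invoke the uniform $L^\infty$ bound \eqref{eq:smoothednonlinear_ineq_Linfini} together with a.e.\ convergence along a subsequence for the $L^\infty$ part, and for the $L^2H^1$ part you use the energy identity \eqref{eq:Vasqueztype_ineq} to bound $\nabla_x\Phi_{\varepsilon_k}(u^{\varepsilon_k})$ uniformly, then divide out the strict lower bound $\alpha_{\varepsilon_k}\geq\eta$ of \hypmodii to control $\nabla_x u^{\varepsilon_k}$ and pass to the weak limit. This is exactly the chain of inequalities the paper records just before the corollary (and already exploits inside the proof of Proposition \ref{prop:limit_nonlinear_smooth_sdepde}), though you spell out the limiting step via weak $L^2$-compactness and identification of the distributional gradient more carefully than the paper's terse wording does.
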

\begin{proof}[Proof of Proposition \ref{prop:limit_nonlinear_smooth_sdepde}]
Owing to \eqref{eq:smoothednonlinear_ineq_energy}, we deduce that $\{u^\varepsilon\}_{\varepsilon}$ is relatively compact for the weak topology in $L^2((0,T)\times\er^d)$. Denote by $\{u^{\varepsilon_k}\}_k$ a (weakly) converging subsequence and $u^0$ its limit.
Under the assumption \hypmodii, the estimate \eqref{eq:Vasqueztype_ineq} ensures that
\[
\sup_\varepsilon\Vert \nabla_xu^{\varepsilon}\Vert_{L^{2}((0,T)\times\er^d)}\leq \frac{1}{\eta}\sup_\varepsilon\Vert \alpha(u^\varepsilon)\nabla_xu^{\varepsilon}\Vert_{L^{2}((0,T)\times\er^d)}=\frac{1}{\eta}\sup_\varepsilon\Vert \nabla_x \Phi(u^{\varepsilon})\Vert_{L^{2}((0,T)\times\er^d)}<\infty,
\]
so that $\lim_k \Vert u^{\varepsilon_k}-u^0\Vert_{L^2((0,T)\times\er^d)}=0$. We can further extract a converging subsequence such that
the convergence holds a.e. on $(0,T)\times\er^d$ and $\sup_k|u^{\varepsilon_k}|\in L^2((0,T)\times\er^d)$. Since $\sigma_\varepsilon(u^\varepsilon)$ is bounded and by \hypmodo, according to the Kolmogorov-Centov criterion, the sequence $\{P^\varepsilon\}_\varepsilon$ is tight on $(\Cc([0,T];\er^d),\Bb(\Cc([0,T];\er^d)))$. Denot{e} for simplicity by $\{(P^{\varepsilon_k},u^{\varepsilon_k})\}_k$ a converging pair of $\{(P^{\varepsilon},u^{\varepsilon})\}_\varepsilon$ and by $(P^0,u^0)$ its limit. Since $P^{\varepsilon_k}(x(t)\in\,dx)=u^{\varepsilon_k}(t,x)\,dx$ the $L^2$-convergence {of} $u^{\varepsilon_k}$ {to} $u^0$ and the convergence of the time marginal distributions of $P^{\varepsilon_k}$ to $P^{0}$ ensure that $P^0(x(t)\in\,dx)=u^0(t,x)\,dx$ for a.e. $0\leq t\leq T$.

Coming back to \eqref{eq:mode_pde_smooth} and taking the limit $k\rightarrow 0$ in the expression,
\begin{align*}
0=\int_{(0,T)\times\er^d}u^\varepsilon(t,x)\partial_t f(t,x)-{\frac{1}{2}\alpha_\varepsilon}(u^\varepsilon(t,x)) \nabla_x u^\varepsilon(t,x)\cdot \nabla_x f(t,x) \,dt\,dx,
\end{align*}
for $f\in\Cc^\infty_c((0,T)\times\er^d)$, and owing to the continuity of $\sigma$, we deduce that $u^0$ is a weak $L^2((0,T)\times\er^d)$-solution to \eqref{eq:mode_pde_nonlinear}.

In order to identify $P^0$ as the solution of the martingale problem \textit{(MP)}, it is sufficient to show that, for all $0\leq s<t\leq T$, $\psi:\Cc([0,s];\er^d)\rightarrow \er$ bounded and continuous, $f\in\Cc^2_c(\er^d)$,
\begin{equation}
\label{proofst:nonlinearlimit_mode_pde_1}
\begin{aligned}
&\lim_k\EE_{P^{\varepsilon_k}}\left[\psi(x(r);0\leq r\leq s)\int_s^t\sigma^2_\varepsilon(u^\varepsilon(\theta,x(\theta)))\triangle_xf(x(\theta))\,d\theta\right]\\
&=\EE_{P^{0}}\left[\psi(x(r);0\leq r\leq s)\int_s^t\sigma^2(u^0(\theta,x(\theta)))\triangle_xf(x(\theta))\,d\theta\right].
\end{aligned}
\end{equation}
To this {end}, let us introduce a smooth approximation of $\sigma^2_\varepsilon(u^{\varepsilon_k})$ and $\sigma^2(u^{0})$ with
$$
\left(\sigma^2_\varepsilon(u^{\varepsilon_k}(t))\right)_\beta(x):=\left(\phi_\beta*\sigma^2_\varepsilon(u^{\varepsilon_k}(t))\right)(x),\text{ and }
\left(\sigma^2(u^{0}(t))\right)_\beta(x):=\left(\phi_\beta*\sigma^2(u^{0}(t))\right)(x),
$$
for $*$ denoting the convolution product on the variable $x\in\er^d$ and $\{\phi_\beta\}_{\beta>0}$ a sequence of mollifiers on $\er^d$ given by $\phi_\beta(y)=\frac{1}{\beta^d}\phi(\frac{y}{\beta})$ with $\phi\geq 0$, $\phi\in\Cc^\infty_c(\er^d)$ and $\int\phi(y)\,dy=1$.

Then, we can consider
\begin{align*}
&\left|\EE_{P^{\varepsilon_k}}\left[\psi(x(r);0\leq r\leq s)\int_s^t\sigma^2_\varepsilon(u^\varepsilon(\theta,x(\theta)))\triangle_xf(x(\theta))\,d\theta\right]\right.\\
&\quad\quad\quad\left.-\EE_{P^{0}}\left[\psi(x(r);0\leq r\leq s)\int_s^t\sigma^2(u^0(\theta,x(\theta)))\triangle_xf(x(\theta))\,d\theta\right]\right|\\
&\leq \left|\EE_{P^{\varepsilon_k}}\left[\psi(x(r);0\leq r\leq s)\int_s^t\left(\sigma^2_\varepsilon(u^\varepsilon(\theta,x(\theta)))-\left(\sigma^2_\varepsilon(u^{\varepsilon_k}(\theta))\right)_\beta(x(\theta))\right)
\triangle_xf(x(\theta))\,d\theta\right]\right|\\
&\quad +\left|\EE_{P^{\varepsilon_k}}\left[\psi(x(r);0\leq r\leq s)\int_s^t\left(\sigma^2_\varepsilon(u^{\varepsilon_k}(\theta))\right)_\beta(x(\theta))
\triangle_xf(x(\theta))\,d\theta\right]\right.\\
&\quad\quad\quad\left.-\EE_{P^{0}}\left[\psi(x(r);0\leq r\leq s)\int_s^t\left(\sigma^2(u^{0}(\theta))\right)_\beta(x(\theta))
\triangle_xf(x(\theta))\,d\theta\right]\right|\\
&\quad +\left|\EE_{P^{0}}\left[\psi(x(r);0\leq r\leq s)\int_s^t\left(\left(\sigma^2(u^{0}(\theta))\right)_\beta(x(\theta))
-\sigma^2(u^0(\theta,x(\theta)))\triangle_xf(x(\theta))\right)\,d\theta\right]\right|\\
&\quad\quad =:I^{\varepsilon,\beta}_1+I^{\varepsilon,\beta}_2+I^{\beta}_3.
\end{align*}
By the weak convergence of $P^{\varepsilon_k}$ and since $\left(\sigma^2_\varepsilon(u^{\varepsilon_k}(t))\right)_\beta(x)$ converges locally to $\left(\sigma^2(u^{0}(t))\right)_\beta(x)$, it follows that $\lim_{\beta\rightarrow 0}\lim_{\varepsilon\rightarrow 0} I^{\varepsilon,\beta}_2=0$.

For $I^{\varepsilon,\beta}_1$, observe that
\begin{align*}
I^{\varepsilon,\beta}_1&\leq \Vert \psi\Vert_{\infty}\EE_{P^{\varepsilon_k}}\left[\int_0^T\left|
\sigma^2_\varepsilon(u^\varepsilon(\theta,x(\theta)))-\left(\sigma^2_\varepsilon(u^{\varepsilon_k}(\theta))\right)_\beta(x(\theta))
\right|\left|\triangle_xf(x(\theta))\right|\,d\theta\right]\\
&\leq \Vert \psi\Vert_{\infty}\sup_k\Vert u^{\varepsilon_k}\Vert_{L^2((0,T)\times\er^d)}
\sqrt{\int_{(0,T)\times\er^d} \left|
\sigma^2_\varepsilon(u^\varepsilon(\theta,x))-\left(\sigma^2_\varepsilon(u^{\varepsilon_k}(\theta))\right)_\beta(x)
\right|^2\left|\triangle_xf(x)\right|^2\,dx\,d\theta}.
\end{align*}
Since
\begin{align*}
\left|\sigma^2_{\varepsilon_k}(u^{\varepsilon_k}(\theta,x))-\left(\sigma^2_{\varepsilon_k}(u^{{\varepsilon_k}}(\theta))\right)_\beta(x)\right|\leq \int_{\er^d}\phi(y)\left|\sigma^2_{\varepsilon_k}(u^{\varepsilon_k}(\theta,x))-\sigma^2_{\varepsilon_k}(u^{{\varepsilon_k}}(\theta,x-\beta y))\right|\,dy,
\end{align*}
we have
\begin{align*}
&\int_{(0,T)\times\er^d} \left|
\sigma^2_\varepsilon(u^\varepsilon(\theta,x))-\left(\sigma^2_\varepsilon(u^{\varepsilon_k}(\theta))\right)_\beta(x)
\right|^2\left|\triangle_xf(x)\right|^2\,dx\,d\theta\\
&\leq \int_{\er^d}\phi(y)\left(\int_{(0,T)\times\er^d} \left|
\sigma^2_{\varepsilon_k}(u^{\varepsilon_k}(\theta,x))- \sigma^2_{\varepsilon_k}(u^{{\varepsilon_k}}(\theta,x-\beta y))\right|^2\left|\triangle_xf(x)\right|^2\,d\theta\,dx
\right)\,dy.
\end{align*}
Then we observe that, for all $y\in\er^d$,
\begin{equation}\label{proofst:nonlinearlimit_mode_pde_2}
\begin{aligned}
&\int_{(0,T)\times\er^d} \left|
\sigma^2_{\varepsilon_k}(u^{\varepsilon_k}(\theta,x))- \sigma^2_{\varepsilon_k}(u^{{\varepsilon_k}}(\theta,x-\beta y))\right|^2\left|\triangle_xf(x)\right|^2\,d\theta\,dx\\
&\leq \int_{(0,T)\times\er^d} \left|
\sigma^2_{\varepsilon_k}(u^{\varepsilon_k}(\theta,x))- \sigma^2_{\varepsilon_k}(u^{0}(\theta,x))\right|^2\left|\triangle_xf(x)\right|^2\,d\theta\,dx\\
&\quad +\int_{(0,T)\times\er^d} \left|
\sigma^2_{\varepsilon_k}(u^0(\theta,x))- \sigma^2_{\varepsilon_k}(u^{0}(\theta,x-\beta y))\right|^2\left|\triangle_xf(x)\right|^2\,d\theta\,dx\\
&\quad +\int_{(0,T)\times\er^d} \left|
\sigma^2_{\varepsilon_k}(u^{\varepsilon_k}(\theta,x-\beta y))- \sigma^2_{\varepsilon_k}(u^{{\varepsilon_k}}(\theta,x-\beta y))\right|^2\left|\triangle_xf(x)\right|^2\,d\theta\,dx.
\end{aligned}
\end{equation}
By continuity of $\sigma$, as $k$ tends to $\infty$, $\sigma(u^{\varepsilon_k})$ tends to $\sigma(u^{0})$ a.e. on $(0,T)\times\er^d$. Therefore, by Lebesgue's dominated convergence theorem, the first expression in the right hand side of \eqref{proofst:nonlinearlimit_mode_pde_2} tends to $0$. In the same way
\begin{align*}
&\lim_k\int_{(0,T)\times\er^d} \left|
\sigma^2_{\varepsilon_k}(u^{\varepsilon_k}(\theta,x-\beta y))- \sigma^2_{\varepsilon_k}(u^{{\varepsilon_k}}(\theta,x-\beta y))\right|^2\left|\triangle_xf(x)\right|^2\,d\theta\,dx\\
&=\lim_k\int_{(0,T)\times\er^d} \left|
\sigma^2_{\varepsilon_k}(u^{\varepsilon_k}(\theta,x))- \sigma^2_{\varepsilon_k}(u^{{\varepsilon_k}}(\theta,x))\right|^2\left|\triangle_xf(x+\beta y)\right|^2\,d\theta\,dx=0.
\end{align*}
For the remaining component in \eqref{proofst:nonlinearlimit_mode_pde_2}, assuming that the support of $f$ is included in the open ball $B(0,R)$ for some radius $R<\infty$,
\begin{align*}
&\int_{(0,T)\times\er^d} \left|
\sigma^2_{\varepsilon_k}(u^0(\theta,x))- \sigma^2_{\varepsilon_k}(u^{0}(\theta,x-\beta y))\right|^2\left|\triangle_xf(x)\right|^2\,d\theta\,dx\\
&\leq \Vert\triangle_x f\Vert_{\infty}\int_{(0,T)\times B(0,R)} \left|
\sigma^2_{\varepsilon_k}(u^0(\theta,x))- \sigma^2_{\varepsilon_k}(u^{0}(\theta,x-\beta y))\right|^2\,d\theta\,dx
\end{align*}
Since $\sigma^2_{\varepsilon_k}(u^0)$ is bounded, the continuity of
$$
z\in\er^d\mapsto \int_{(0,T)\times B(0,R)} \left|
\sigma^2_{\varepsilon_k}(u^0(\theta,x))- \sigma^2_{\varepsilon_k}(u^{0}(\theta,x-z))\right|^2\,d\theta\,dx
$$
ensures that
$$
\lim_{\beta\rightarrow 0}\int_{(0,T)\times\er^d} \left|
\sigma^2_{\varepsilon_k}(u^0(\theta,x))- \sigma^2_{\varepsilon_k}(u^{0}(\theta,x-\beta y))\right|^2\left|\triangle_xf(x)\right|^2\,d\theta\,dx=0.
$$
Coming back to \eqref{proofst:nonlinearlimit_mode_pde_2}, we deduce that
$$
\lim_{\beta\rightarrow 0}\lim_{\varepsilon\rightarrow 0}\int_{(0,T)\times\er^d} \left|
\sigma^2_{\varepsilon_k}(u^{\varepsilon_k}(\theta,x))- \sigma^2_{\varepsilon_k}(u^{{\varepsilon_k}}(\theta,x-\beta y))\right|^2\left|\triangle_xf(x)\right|^2\,d\theta\,dx=0,
$$
and by extension that $\lim_{\beta\rightarrow 0}\lim_{\varepsilon\rightarrow 0} I^{\varepsilon,\beta}_1=0$.

Finally, for $I^\beta_3$, replicating the arguments for $I^{\varepsilon_k,\beta}_1$, we have
\begin{align*}
&\left|\EE_{P^{0}}\left[\psi(x(r);0\leq r\leq s)\int_s^t\left(\left(\sigma^2(u^{0}(\theta))\right)_\beta(x(\theta))
-\sigma^2(u^0(\theta,x(\theta)))\triangle_xf(x(\theta))\right)\,d\theta\right]\right|\\
&\leq \Vert \psi\Vert_{\infty}\sup_k\Vert u^{\varepsilon_k}\Vert_{L^2((0,T)\times\er^d)}\sqrt{\int_{(0,T)\times\er^d} \left|
\sigma^2(u^0(\theta,x))-\left(\sigma^2(u^{0}(\theta))\right)_\beta(x)
\right|^2\left|\triangle_xf(x)\right|^2\,dx\,d\theta}\\
&\leq \Vert \psi\Vert_{\infty}\sup_k\Vert u^{\varepsilon_k}\Vert_{L^2((0,T)\times\er^d)}\sqrt{\int_{\er^d}\phi(y)\int_{(0,T)\times\er^d} \left|
\sigma^2(u^0(\theta,x))-\sigma^2(u^{0}(\theta))(x-\beta y)
\right|^2\left|\triangle_xf(x)\right|^2\,dx\,d\theta\,dy}
\end{align*}
where the last upper bound tends to $0$ as $k\rightarrow \infty$.
We then conclude on \eqref{proofst:nonlinearlimit_mode_pde_1}.
\end{proof}

\subsection{Uniqueness result for \eqref{eq:moderated}}
Let us first start by showing that the time marginal distribution of \eqref{eq:moderated} are unique.
\begin{proposition}\label{prop:moderateduniq} Assume that \hypmodo, \hypmodi and \hypmodii hold true. Let $(X^1_t,\,u^1_t;\,t\in[0,T])$ and $(X^2_t,\,u^2_t;\,t\in[0,T])$ be two weak solutions to \eqref{eq:moderated}. Then, for all $0\leq t\leq T$, $u^1_t=u^2_t$ a.e. on $\er^d$.
This conclusion holds true also under \hypmodo, \hypmodi and \hypmodiiweaken plus the assumption that $\alpha$ is strictly increasing.
\end{proposition}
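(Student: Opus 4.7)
The plan is to derive a mild (Duhamel-type) integral equation in $L^2((0,T)\times\er^d)$ for the difference $w := u^1 - u^2$ and then close an $L^2$ estimate on $w$ by combining the sign structure of $\Phi$ with the $H^1$-regularity of $\Phi(u^i)$ provided by the Vasquez-type identity \eqref{eq:Vasqueztype_ineq}. Subtracting the weak Fokker--Planck equations satisfied by $u^1$ and $u^2$, one sees that $w$ is a weak $L^2((0,T)\times\er^d)$-solution of
\begin{equation*}
\partial_t w = \tfrac12 \Delta\bigl(\Phi(u^1)-\Phi(u^2)\bigr), \qquad w(0,\cdot)=0,
\end{equation*}
with $\Phi(r)=\sigma^2(r)r$ and $\Phi'=\alpha$. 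Linearising,
\begin{equation*}
\Phi(u^1)-\Phi(u^2) = A(t,x)\,w(t,x), \qquad A(t,x)=\int_0^1 \alpha\bigl(\tau u^1+(1-\tau)u^2\bigr)\,d\tau,
\end{equation*}
with $A\in L^\infty((0,T)\times\er^d)$ by boundedness of $u^1,u^2$ and continuity of $\alpha$. Under \hypmodii one has $A\geq \eta>0$; under \hypmodiiweaken with $\alpha$ strictly increasing, $\Phi$ is strictly increasing, so that $\mathrm{sgn}(\Phi(u^1)-\Phi(u^2))=\mathrm{sgn}(w)$ and $A(t,x)>0$ whenever $\max(u^1,u^2)(t,x)>0$.

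Fixing a reference constant $c>0$ and rewriting $\partial_t w - \tfrac{c}{2}\Delta w = \tfrac12\Delta((A-c)w)$, Duhamel's formula with the heat semigroup $P^c_t$ of generator $\tfrac{c}{2}\Delta$ and the initial condition $w(0,\cdot)=0$ yield the mild equation
\begin{equation*}
w(t,\cdot) = \tfrac12 \int_0^t \Delta P^c_{t-s}\bigl((A-c)w(s,\cdot)\bigr)\,ds.
\end{equation*}
I would take $L^2$ norms and shift one spatial derivative onto the kernel by writing $\Delta P^c_{t-s}=\nabla\!\cdot\!(\nabla P^c_{t-s})$, invoking the standard heat-kernel bound $\|\nabla P^c_{t-s}\|_{L^1\to L^2}\lesssim (t-s)^{-1/2}$. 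The remaining derivative then falls on $(A-c)w=\Phi(u^1)-\Phi(u^2)-cw$, and the control $\nabla\Phi(u^i)\in L^2((0,T)\times\er^d)$, inherited (by lower semicontinuity and passage to the limit $\varepsilon\to 0$) from \eqref{eq:Vasqueztype_ineq}, absorbs the main contribution. The resulting Volterra-type inequality
\begin{equation*}
\|w(t)\|_{L^2}^2 \leq C\int_0^t (t-s)^{-1/2}\|w(s)\|_{L^2}^2\,ds + \text{l.o.t.}
\end{equation*}
is then closed by a singular Gronwall lemma, giving $w\equiv 0$ in $L^2((0,T)\times\er^d)$.

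The main obstacle is the handling of $\Delta P^c_{t-s}$ in $L^2$: its direct $L^2\to L^2$ operator norm $(t-s)^{-1}$ is not integrable in time, so the mild estimate cannot be closed naively. The cure is to pay one derivative in order to gain the integrable factor $(t-s)^{-1/2}$, which works precisely because \eqref{eq:Vasqueztype_ineq} supplies $\nabla\Phi(u^i)\in L^2$ rather than $\nabla u^i\in L^2$; this is also the structural reason the mild formulation is phrased in terms of $\Phi$ and why the strong ellipticity of $\sigma$ at $r=0$ can be dropped. In the degenerate variant (\hypmodiiweaken together with strict monotonicity of $\alpha$), the coefficient $A$ may vanish where $u^1=u^2=0$; there, the sign identity $\mathrm{sgn}(\Phi(u^1)-\Phi(u^2))=\mathrm{sgn}(w)$ is what keeps the contraction structure viable on a short time interval $[0,T_\ast]$, and iteration on consecutive subintervals then extends uniqueness to all of $[0,T]$.
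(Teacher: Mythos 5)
Your starting point is the same as the paper's: linearise $\Phi(u^1)-\Phi(u^2)=A\,w$ with $A=\int_0^1\alpha(\tau u^1+(1-\tau)u^2)\,d\tau$, embed a constant-coefficient heat operator ($\gamma^2 I_d$ in the paper, $cI_d$ in your write-up), and pass to a mild equation for $w=u^1-u^2$. You have also correctly identified the obstacle: the $L^2\to L^2$ bound $\|\Delta P^c_{t-s}\|\sim(t-s)^{-1}$ is non-integrable. But the cure you propose does not close, and the paper's cure is different.

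The gap is in the step where you ``pay one derivative'' to gain $(t-s)^{-1/2}$ and claim the result is a Volterra inequality $\|w(t)\|_{L^2}^2\leq C\int_0^t(t-s)^{-1/2}\|w(s)\|_{L^2}^2\,ds+\text{l.o.t.}$. After integrating by parts onto the kernel, the remaining derivative lands on $(A-c)w = \Phi(u^1)-\Phi(u^2)-cw$, so what you actually obtain is
\[
\|w(t)\|_{L^2}\ \leq\ C\int_0^t (t-s)^{-1/2}\,\bigl\|\nabla\bigl(\Phi(u^1(s))-\Phi(u^2(s))\bigr)-c\nabla w(s)\bigr\|_{L^2}\,ds,
\]
and this is not a function of $\|w(s)\|_{L^2}$ alone. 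The quantities $\nabla\Phi(u^i)$, $\nabla u^i$ are finite in $L^2((0,T)\times\er^d)$ by Corollary~\ref{coro:limit_nonlinear_smooth_sdepde}, but they are not \emph{small in terms of} $w$; the ``absorption'' you invoke does not occur, and a singular Gr\"onwall argument has nothing to bite on. If $u^1=u^2$, these gradient terms cancel exactly, but you cannot deduce this at the level of an a priori inequality without a genuine contraction; in other words, the derivative-shifting trick trades the $(t-s)^{-1}$ singularity for a quantity that is no longer controlled by $\|w\|_{L^2}$, and the argument is circular.

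The paper's cure avoids paying a derivative entirely. It uses a parabolic Calder\'on--Zygmund bound, proved in the appendix by Fourier transform in $(t,x)$, namely
\[
\Bigl\|\int_0^{\cdot}\Delta_x G^\gamma_{\cdot-s}\bigl(f(s)\bigr)\,ds\Bigr\|_{L^2((0,T)\times\er^d)}\ \leq\ \frac{2}{\gamma^2}\,\|f\|_{L^2((0,T)\times\er^d)}.
\]
This is a direct space--time $L^2$ estimate, with no time singularity to integrate, because the pointwise-in-time singularity is cancelled in mixed norm via Plancherel: the Fourier multiplier of $\int_0^\cdot\Delta G^\gamma_{\cdot-s}\,ds$ is $\dfrac{-2|\xi|^2}{2\mathbf i\tau-\gamma^2|\xi|^2}$, which is bounded by $2/\gamma^2$ uniformly in $(\tau,\xi)$. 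Plugging this into the mild formulation gives
\[
\|u^1-u^2\|_{L^2((0,T)\times\er^d)}\ \leq\ \frac{1}{\gamma^2}\,\bigl\|\,(\gamma^2-A)\,(u^1-u^2)\bigr\|_{L^2((0,T)\times\er^d)},
\]
and for $\gamma^2>\sup A$ this is a contraction by a factor $<1$ wherever $A>0$ --- there is no Gr\"onwall lemma at all. Under \hypmodii this factor is uniformly $<1$, giving $w=0$ at once. Under \hypmodiiweaken with $\alpha$ strictly increasing, $A$ can vanish where $u^1=u^2=0$, so the factor is only $<1$ away from that set; the paper handles this by splitting on the level sets $\{u^2-u^1\geq\kappa\}$, using strict monotonicity of $\alpha$ to bound $A\geq\zeta^2(\kappa)>0$ there, concluding $u^2\leq u^1$ a.e.\ since $\kappa$ is arbitrary, and finishing by symmetry. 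Your remark that the sign structure ``keeps the contraction viable on a short time interval'' is not how the degenerate case is closed; the splitting by $\kappa$ in space-time is the essential extra ingredient, and no short-time localisation or iteration is needed.

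In short, the approach is morally the same (mild formulation around a constant-coefficient heat kernel and linearisation through $\alpha$), but your key estimate is wrong: replace the derivative-shifting / singular Gr\"onwall step by the Stroock--Varadhan parabolic $L^2((0,T)\times\er^d)\to L^2((0,T)\times\er^d)$ estimate and the conclusion becomes a genuine one-step contraction. The degenerate case then requires the level-set splitting in $\kappa$, not a short-time iteration.
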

\begin{proof}
We give the proof assuming \hypmodo, \hypmodi and \hypmodiiweaken plus the assumption that $\alpha$ is strictly increasing, the other case can be easily to deduce from the following arguments.
Consider two weak solutions $(X^1_t,\,u^1_t;\,t\in[0,T])$ and $(X^2_t,\,u^2_t;\,t\in[0,T])$ to \eqref{eq:moderated}. Given some $\gamma>0$, that will be chosen later, define
\begin{equation}\label{eq:Gaussiankernel}
 G^\gamma_{s,t}(x,y)=G^\gamma_{t-s}(x-y)=\left(2\pi\gamma^2(t-s)\right)^{-\frac{d}{2}}\exp\left(-\frac{|x-y|^2}{2\gamma^2(t-s)}\right).
 \end{equation}
 Since the kernel $G^\gamma_{t}$ gives the fundamental solution related to the parabolic operator $\partial_s+\frac{\gamma^2}{2}\triangle_x$; that is
 \[
 \partial_s G^\gamma_{s,t}+\frac{\gamma^2}{2}\triangle_xG^\gamma_{s,t}=0,\,\lim_{s\rightarrow t^-}G^\gamma_{s,t}=\delta_{\{y\}},\,\delta_{\{y\}}\,\text{the Dirac measure in }y,
 \]
the function $v(s,x)={G^\gamma_{s,t}(f)(x)=}G^\gamma_{t-s}{*} f(x),\,s\leq t,x\in\er^d,f\in\Cc^\infty_c(\er^d)$ (${*}$ denoting the convolution product on $\er^d$) is a $\Cc^\infty([0,t]\times\er^d)$ function satisfying:
\begin{equation*}
\left\{
\begin{aligned}
&\partial_s v(s,x) + \frac{\gamma^2}{2}\triangle_x v(s,x) = 0, ~0\leq s< t, x\in\er^d,\\
&v({t},x) = f(x),\,x\in \er^d.
\end{aligned}
\right.
\end{equation*}
Applying It\^o formula to $v(s,X^i_s)$ for $i=1,2$,
\begin{align*}
\int f u^i_t \,dx = \Ee \left[v(t,X^i_t)\right] & = \Ee\left[ v(0,X_0)\right] + \Ee \left[\int_0^t  \frac{\partial v}{\partial s} (s,X^i_s) ds\right] +
\frac{1}{2}\EE \left[\int_0^t   \sigma^2(u^i_s(X_s^i))\triangle_xv(s,X^i_s) ds\right] \\
& = \Ee \left[v(0,X_0)\right] +
\frac{1}{2}\EE \left[\int_0^t   \left(\sigma^2(u^i_s(X_s^i))-\gamma^2\right)\triangle_xv(s,X^i_s) ds\right]
\end{align*}
so that for $f\in\Cc^\infty_c(\er^d)$, {$G^\gamma_{0,t}=G^\gamma_t$,}
 \[
 \int f u^i_t\,dx=\int f\left(G^\gamma_{t}(\mu_0)+\frac{1}{2}{\sum_{k=1}^d} \int_0^t{\frac{\partial^{2}}{\partial x_k^2}}G^\gamma_{t-s}\left(\left(\sigma^2(u^i_s)-\gamma^2\right)u^i_s\right)\,ds\right)\,dx.
 \]
 We then obtain that
 \[
 \int f\left(u^1_t-u^2_t\right)\,dx=\int f\left(\frac{1}{2}\sum_{k=1}^d \int_0^t{\frac{\partial^{2}}{\partial x_k^2}}G^\gamma_{t-s}\left(\left(\sigma^2(u^1_s)-\gamma^2\right)u^1_s-\left(\sigma^2(u^2_s)-\gamma^2\right)u^2_s\right)\,ds\right)\,dx.
 \]
Next, we take the supremum over all $f\in\Cc^\infty_c(\er^d)$ such that $\Vert f\Vert_{L^2(\er^d)}=1$ and we integrate the resulting expression over $(0,T)$. It follows that
 \begin{equation}\label{proofst:mode_weak_1}
  \Vert u^1-u^2\Vert_{L^2((0,{T})\times\er^d)}=\Vert\frac{1}{2}\sum_{k=1}^d\int_0^{.}{\frac{\partial^{2}}{\partial x_k^2}}G^\gamma_{.-s}\left(
  \left(\sigma^2(u^1_s)-\gamma^2\right)u^1_s-\left(\sigma^2(u^2_s)-\gamma^2\right)u^2_s\right)\,ds\Vert_{L^2((0,{T})\times\er^d)}.
 \end{equation}
Now, let us recall that for all $f\in L^2((0,{T})\times\er^d)$, (see e.g. Stroock and Varadhan \cite{StrVar-79}, Appendix A.2, Lemmas A.2.1 and A.2.2)
 \[
 \Vert \int_0^{.}\partial^2_{x_kx_l}G^\gamma_{.-s}(f(s))\,ds\Vert_{L^2((0,{T})\times\er^d)}\leq \frac{2}{\gamma^2}\Vert f \Vert_{L^2((0,{T})\times\er^d)},\,1\leq k,l\leq d.
 \]
{Thank to} the boundedness of  $\sigma^2(u^i),\,i=1,2$, the preceding estimate ensures that the r.h.s. of \eqref{proofst:mode_weak_1} is well defined.
 A closer investigation of the proof arguments in  \cite{StrVar-79} enables to slightly improve the preceding estimate with
 \begin{equation}\label{eq:StroockVaradhan-estim}
 \begin{aligned}
 \Vert \int_0^{.}\triangle_x G^\gamma_{.-s}(f(s))\,ds\Vert_{L^2((0,{T})\times\er^d)}&=\Vert \sum_{k=1}^d\int_0^{.}{\frac{\partial^{2}}{\partial x_k^2}}G^\gamma_{.-s}(f(s))\,ds\Vert_{L^2((0,{T})\times\er^d)}\leq \frac{2}{\gamma^2}\Vert f \Vert_{L^2((0,{T})\times\er^d)}
 \end{aligned}
 \end{equation}
(see the appendix section, for a short proof). Applying this estimate to \eqref{proofst:mode_weak_1}, we obtain
\begin{align*}
 \Vert u^1-u^2\Vert_{L^2((0,{T})\times\er^d)}&\leq \frac{1}{\gamma^2} \Vert \left(\sigma^2(u^1)-\gamma^2\right)u^1-\left(\sigma^2(u^2)-\gamma^2\right)u^2\Vert_{L^2((0,{T})\times\er^d)}\\
&\qquad =  \frac{1}{\gamma^2}\Vert \left( \sigma^2(u^1)u^1-\sigma^2(u^2)u^2-\gamma^2(u^1-u^2)\right)\Vert_{L^2((0,{T})\times\er^d)}.
\end{align*}
For $\alpha(r)$ as in \eqref{def:alpha}, by observing that the first order Taylor expansion writes
$$
\sigma^2(r_2)r_2-\sigma^2(r_1)r_1=\int_0^1(r_1-r_2)\alpha(r_1+\theta(r_2-r_1))\,d\theta,
$$
{and choosing $\gamma>0$ so that
$$
\gamma^2>\max_{i=1,2}|\int_0^1\alpha(u^i+\theta(u^i-u^j))\,d\theta|,
$$}
we get
\begin{align*}
|\left(\sigma^2(u^1)u^1-\sigma^2(u^2)u^2-\gamma^2(u^1-u^2)\right)|
&=\left(\gamma^2-\int_0^1\alpha(u_1+\theta(u_2-u_1))\,d\theta\right) \, |u^1-u^2|,
\end{align*}
and deduce that
\begin{equation}\label{proofst:uniquenessmod_1}
\Vert u^1-u^2\Vert^2_{L^2((0,{T})\times\er^d)}\leq \frac{1}{\gamma^4}\Vert\left(\gamma^2-\int_0^1\alpha(u_1+\theta(u_2-u_1))\,d\theta\right) \left|u^1-u^2\right|\Vert^2_{L^2((0,{T})\times\er^d)}.
\end{equation}
Splitting $\Vert u^1-u^2\Vert^2_{L^2((0,{T})\times\er^d)}$ into the sum
$$
\Vert (u^1-u^2)\1_{\{u^2-u^1\geq \kappa\}}\Vert^2_{L^2((0,{T})\times\er^d)}+\Vert (u^1-u^2)\1_{\{u^2-u^1< \kappa\}}\Vert^2_{L^2((0,{T})\times\er^d)}
$$
for some arbitrary $\kappa>0$, \eqref{proofst:uniquenessmod_1} reduces to
\begin{align*}
&\Vert (u^1-u^2)\1_{\{u^2-u^1\geq \kappa\}}\Vert^2_{L^2((0,{T})\times\er^d)}\\
&\leq \frac{1}{\gamma^4}\Vert\left(\gamma^2-\int_0^1\alpha(u_1+\theta(u_2-u_1))\,d\theta\right) \left|u^1-u^2\right|\Vert^2_{L^2((0,{T})\times\er^d)}
-\Vert (u^1-u^2)\1_{\{u^2-u^1< \kappa\}}\Vert^2_{L^2((0,{T})\times\er^d)}\\
&\leq \frac{1}{\gamma^4}\Vert\left(\gamma^2-\int_0^1\alpha(u_1+\theta(u_2-u_1))\,d\theta\right) \left|u^1-u^2\right|\1_{\{u^2-u^1\geq \kappa\}}\Vert^2_{L^2((0,{T})\times\er^d)}
\end{align*}

Fixing $\kappa>0$, and setting
$$
\zeta^2{(\kappa)}:={\sup\big\{\beta>0\,;\int_0^1\alpha(r\kappa)\,dr>\beta\big\}}
$$
which is (strictly) positive by {\hypmodiiweaken and the monotone assumption of $\alpha$},
$$
\gamma^2-\int_0^1\alpha(u_1+\theta(u_2-u_1))\,d\theta{\leq \gamma^2-\int_0^1\alpha(\theta \kappa)\,d\theta}\leq \gamma^2-\zeta^2{(\kappa)},
$$
which implies that $\Vert |u^1-u^2|\ind_{\{u^2-u^1\geq \kappa\}} \Vert_{L^2((0,{T})\times\er^d)}<(\gamma^2-\zeta^2)\Vert|u^1-u^2|\ind_{\{u^2-u^1\geq \kappa\}}\Vert_{L^2((0,{T})\times\er^d)}/\gamma^2<\Vert|u^1-u^2|\ind_{\{u^2-u^1\geq \kappa\}}\Vert_{L^2((0,{T})\times\er^d)}$. Since $\kappa$ is arbitrary, $u_2\leq u_1$ for a.e. on $(0,{T})\times\er^d$. By symmetry, we can also exchange $u^1$ and $u^2$, and deduce that $u_1\leq u_2$ for a.e. on $(0,{T})\times\er^d$, from which we conclude that $u_1=u_2$ for a.a. $(t,x)\in(0,{T})\times\er^d$.
This conclude the claim.
\end{proof}

To conclude on the strong uniqueness of the solution to \eqref{prop:limit_nonlinear_smooth_sdepde}, let us recall the following result due to Champagnat and Jabin~\cite{ChaJab-17}.
{
\begin{theorem}[Theorems 1.1 and 1.2, \cite{ChaJab-17}]\label{thm:ChJab-17} Let $(Z^1_t;\,t\geq 0)$ and $(Z^2_t;\,t\geq 0)$ be two solutions to the SDE
\[
dZ_t=\Sigma(t,Z_t)\,dW_t,\,Z_0=\xi,
\]
with one-dimensional time marginal $u_{Z_1}(t,z)\,dz$ and $u_{Z_2}(t,z)\,dz$ in
$L^{2q}_{\loc}((0,\infty);W^{1,2p}(\er^d))$. If $\Sigma:(0,\infty)\times\er^d\rightarrow{\er^{d\times d}}$
 is in $L^\infty((0,\infty)\times\er^d)\cap L^{q'}_{\loc}((0,\infty);W^{1,p'}(\er^d))$,
  for $1/p+1/{p'}=1$, $1/q+1/{q'}=1$, then one has pathwise uniqueness: a.s. $\sup_{t\geq 0}|Z^1_t-Z^2_t|=0$.
\end{theorem}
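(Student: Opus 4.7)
The plan is to follow the stochastic DiPerna--Lions strategy via maximal functions, as developed by Champagnat and Jabin. Let $(Z^1_t)$ and $(Z^2_t)$ be two solutions driven by the same Brownian motion with $Z^1_0 = Z^2_0 = \xi$, set $\Delta_t = Z^1_t - Z^2_t$ and $\Delta\Sigma_t = \Sigma(t,Z^1_t) - \Sigma(t,Z^2_t)$, and consider the regularized functional
\begin{equation*}
Q_\delta(t) = \EE\!\left[\log\!\left(1 + \tfrac{|\Delta_t|^2}{\delta^2}\right)\right], \qquad \delta > 0.
\end{equation*}
First I would apply It\^o's formula to $\phi_\delta(z) = \log(1+|z|^2/\delta^2)$ evaluated at $\Delta_t$. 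The martingale part vanishes in expectation. The remaining (bracket) term produces an integrand bounded by $C|\Delta\Sigma_s|^2/(\delta^2 + |\Delta_s|^2)$.

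The crucial step is the Sobolev/maximal-function estimate: for $f \in W^{1,p'}(\er^d)$,
\begin{equation*}
|f(x) - f(y)| \leq C|x-y|\bigl(M|\nabla f|(x) + M|\nabla f|(y)\bigr) \text{ a.e.},
\end{equation*}
where $M$ is the Hardy--Littlewood maximal operator. Applied to $\Sigma(s,\cdot)$ this gives $|\Delta\Sigma_s|^2 \leq C|\Delta_s|^2\bigl(M|\nabla\Sigma|(s,Z^1_s)^2 + M|\nabla\Sigma|(s,Z^2_s)^2\bigr)$. The factor $|\Delta_s|^2$ now cancels (with room to spare) against the denominator $\delta^2 + |\Delta_s|^2$, producing the $\delta$-independent bound
\begin{equation*}
Q_\delta(t) \leq C\int_0^t \EE\!\left[M|\nabla\Sigma|(s,Z^1_s)^2 + M|\nabla\Sigma|(s,Z^2_s)^2\right] ds.
\end{equation*}
Then I would dualize via the time marginals: $\EE[g(s,Z^i_s)] = \int g(s,z) u_{Z^i}(s,z)\,dz \leq \|g(s,\cdot)\|_{L^{p'}(\er^d)} \|u_{Z^i}(s,\cdot)\|_{L^p(\er^d)}$, integrate in $s$ with the H\"older pair $(q,q')$, and use the $L^{p'}$-boundedness of $M$ (valid since $p'>1$) together with $\Sigma \in L^{q'}_{\loc}((0,\infty);W^{1,p'}(\er^d))$ and $u_{Z^i} \in L^{2q}_{\loc}((0,\infty);W^{1,2p}(\er^d))$ (in particular $u_{Z^i} \in L^q_{\loc}(L^p)$). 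A localization argument handles the ``local'' nature of the assumptions, using the boundedness of $\Sigma$ to control exit times of large balls.

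Finally, since the right-hand side is finite and independent of $\delta$, whereas $\phi_\delta(\Delta_s) \to +\infty$ on $\{\Delta_s \neq 0\}$ as $\delta \to 0^+$, Fatou's lemma forces $\Pp(\Delta_t \neq 0) = 0$ for a.e.\ $t$, and path continuity upgrades this to $\sup_{t\geq 0} |\Delta_t| = 0$ almost surely. The main obstacle I foresee is Step 2: making rigorous the cancellation between the quadratic-variation term and the maximal-function estimate, which requires showing that the trajectory-dependent quantity $M|\nabla\Sigma|(s,Z^i_s)$ is actually integrable along the paths (hence the central role of the time-marginal Sobolev regularity), and simultaneously controlling the stochastic integral contribution despite the singular denominator $\delta^{-2}$. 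A secondary technical point is to handle $\Sigma$ matrix-valued rather than scalar, which only affects constants via the Frobenius norm, and to manage the localization so the hypotheses at the $\loc$ level suffice.
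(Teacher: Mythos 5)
This theorem is quoted in the paper as an imported result of Champagnat and Jabin~\cite{ChaJab-17}; the paper itself contains no proof, so there is no internal argument to compare against, and your proposal must be judged on its own merits and against~\cite{ChaJab-17}. Your overall strategy --- It\^o applied to the regularised entropy $\log(1+|\Delta_t|^2/\delta^2)$, the Crippa--De Lellis maximal-function pointwise inequality to split the quadratic-variation integrand into a sum of separate functions of $Z^1_s$ and $Z^2_s$, then duality against the time marginals --- is a legitimate and recognisable approach, and indeed it suffices for the regime actually invoked in the paper ($p=q=1$, $p'=q'=\infty$, i.e.\ essentially Lipschitz $\Sigma$). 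It does not, however, recover Theorems~1.1 and~1.2 of~\cite{ChaJab-17} in the generality stated.

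There are three concrete gaps. (a) The H\"older pairing in your dualisation is off by a factor of two: you bound $\int (M|\nabla\Sigma|)^2\,u_{Z^i}\,dz$ by $\|(M|\nabla\Sigma|)^2\|_{L^{p'}}\|u_{Z^i}\|_{L^{p}}$, but $M$ bounded on $L^{p'}$ only gives $M|\nabla\Sigma|\in L^{p'}$, hence $(M|\nabla\Sigma|)^2\in L^{p'/2}$; the correct conjugate for $u_{Z^i}$ is $L^{p'/(p'-2)}$, not $L^p$. (b) The parenthetical ``in particular $u_{Z^i}\in L^q_{\loc}(L^p)$'' from $u_{Z^i}\in L^{2q}_{\loc}(W^{1,2p})$ is false on an unbounded spatial domain; you would need to invoke $\|u_{Z^i}(s,\cdot)\|_{L^1}=1$ and interpolate, which you do not state. (c) Even after correcting (a) and (b), closing the estimate requires $p'/(p'-2)\le 2p$, which under $1/p+1/p'=1$ is equivalent to $p'\ge 3$ (and similarly $q'\ge 3$), so a genuine range of admissible exponents is left uncovered. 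Most tellingly, the theorem assumes the \emph{Sobolev} regularity $u_{Z^i}\in W^{1,2p}$ rather than mere Lebesgue regularity $L^{2p}$, yet your argument never touches $\nabla u_{Z^i}$. That hypothesis is exactly what the proof in~\cite{ChaJab-17} consumes: their closing estimate is not the maximal-function bound but a finer argument that, through the weak (Fokker--Planck) formulation, brings the gradient of the marginal densities into the estimate and thereby handles the exponent range your approach misses. So your sketch identifies the right functional and the right overall architecture, but the decisive lemma is different and your version of it fails for part of the parameter range.
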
}

According to Proposition \ref{prop:moderateduniq} and Corollary \ref{coro:limit_nonlinear_smooth_sdepde}, for any solution to \eqref{eq:moderated}, $u$ is in $L^\infty((0,T)\times\er^d)$. Hence, a direct application of Theorem \ref{thm:ChJab-17} (with $p=q=1,\,p'=q'=\infty$) gives

\begin{proposition}\label{prop:moderatedstronguniq} {Under \hypmodo, \hypmodi and \hypmodii,} \eqref{eq:moderated} admits at most one {strong} solution.
\end{proposition}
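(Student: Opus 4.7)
The plan is to use the weak uniqueness of the time-marginal density established in Proposition~\ref{prop:moderateduniq} to freeze the McKean nonlinearity in \eqref{eq:moderated}, thereby reducing pathwise uniqueness for \eqref{eq:moderated} to pathwise uniqueness for a \emph{linear} SDE with a deterministic (but non-Lipschitz) diffusion coefficient, to which Theorem~\ref{thm:ChJab-17} then applies.

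Concretely, we start from two strong solutions $(X^1_t;\,0\leq t\leq T)$ and $(X^2_t;\,0\leq t\leq T)$ of \eqref{eq:moderated} defined on the same filtered probability space, driven by the same Brownian motion $W$ and with common initial condition $X_0$, and we denote by $u^1,u^2$ their respective time-marginal densities. Under \hypmodo, \hypmodi and \hypmodii, Proposition~\ref{prop:moderateduniq} yields $u^1=u^2=:u$ a.e.\ on $(0,T)\times\er^d$. Consequently, both $X^1$ and $X^2$ solve the linear SDE
\[
Z_t = X_0 + \int_0^t \Sigma(s,Z_s)\,dW_s,\qquad \Sigma(s,z):=\sigma(u(s,z))I_d,
\]
whose coefficient $\Sigma$ is now a deterministic function of $(s,z)$ that no longer depends on the law of the solution.

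Next, we verify the regularity assumptions of Theorem~\ref{thm:ChJab-17} with the exponent choice $p=q=1$, $p'=q'=\infty$. By Corollary~\ref{coro:limit_nonlinear_smooth_sdepde}, $u\in L^\infty((0,T)\times\er^d)\cap L^2((0,T);H^1(\er^d))$, which already furnishes the marginal condition $u\in L^2_{\loc}((0,T);W^{1,2}(\er^d))$ required by the theorem. Under \hypmodi, the continuity of $\sigma$ and $\sigma'$ on the compact interval $[0,\|u\|_{L^\infty}]$ gives $\Sigma\in L^\infty((0,T)\times\er^d)$, while the Sobolev chain rule yields
\[
\nabla_z\Sigma(s,z)=\sigma'(u(s,z))\,\nabla_z u(s,z)\otimes I_d \in L^2((0,T)\times\er^d).
\]
Theorem~\ref{thm:ChJab-17} then produces $\sup_{0\leq t\leq T}|X^1_t-X^2_t|=0$ almost surely, which is the claimed pathwise uniqueness.

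The main delicate point of the plan is the \emph{ordering} of the two uniqueness steps: since $\sigma(u)$ a priori depends on the unknown law of the solution, no direct Gronwall- or Girsanov-type pathwise comparison is available for the nonlinear SDE itself, and the linearization that makes Champagnat--Jabin applicable becomes legitimate only after the weak uniqueness of the marginals has been secured by Proposition~\ref{prop:moderateduniq}. A secondary technical issue is matching our Sobolev control on $\Sigma$, which is genuinely of type $L^2_tH^1_x$ rather than $L^\infty_tW^{1,\infty}_x$, with the specific Sobolev pairing invoked in the statement of Theorem~\ref{thm:ChJab-17}; this is handled in the dual sense of \cite{ChaJab-17}, and no additional regularity beyond what Corollary~\ref{coro:limit_nonlinear_smooth_sdepde} provides is needed.
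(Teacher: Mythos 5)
Your proposal reproduces the paper's argument essentially verbatim: invoke Proposition~\ref{prop:moderateduniq} to make the marginal $u$ common to any two solutions, use Corollary~\ref{coro:limit_nonlinear_smooth_sdepde} for the regularity $u\in L^\infty((0,T)\times\er^d)\cap L^2((0,T);H^1(\er^d))$, freeze the diffusion coefficient, and apply Theorem~\ref{thm:ChJab-17} with $p=q=1$, $p'=q'=\infty$. The one thing worth noting is that the caveat you raise at the end is also latent in the paper itself: with $p'=q'=\infty$ the theorem as stated asks $\Sigma=\sigma(u)I_d\in L^\infty_{\loc}((0,T);W^{1,\infty}(\er^d))$, whereas the chain-rule computation from $u\in L^2_tH^1_x$ only delivers $\nabla_x\Sigma\in L^2((0,T)\times\er^d)$; the paper simply asserts the "direct application" without addressing this mismatch, and your appeal to "the dual sense of \cite{ChaJab-17}" does not resolve it either, so this is an inherited gap rather than one you introduced.
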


\subsection{Generalization to matrix valued diffusion}\label{rmk:ModerateGeneral}

\newcommand{\hypmodibis}{{\textit(${\textit A}^\prime_1$})\xspace}
\newcommand{\hypmodiii}{{\textit(${\textit A}^\prime_2$})\xspace}
\newcommand{\hypmodiiiweaken}{{\textit(${\textit A}^\prime_2$-weakened})\xspace}

\newcommand{\Am}{{a}}

We may remark that the main ideas for the proof of Theorem \ref{thm:moderatedexist} can be extended to obtain the existence and uniqueness of a weak solution to \eqref{eq:moderated} in the situation where the diffusion component is a $d\times d$-matrix valued function; namely $\sigma:[0,\infty)\rightarrow \er^{d\times d}$. Such extension holds provided that the assumption \hypmodo remains unchanged meanwhile \hypmodi, \hypmodii and \hypmodiiweaken are respectively replaced by the following:
\begin{hypothesis}
\item{\bf \hypmodibis} The map  $r \mapsto \sigma(r)\in\er^{d\times d}$ is continuously differentiable on $\er^+$.
\item{\bf\hypmodiii} For $\Am(r)=\sigma\sigma^*(r)$, the map $r\in[0,\infty)\mapsto \alpha(r)\in\er^{d\times d}$
given by
\[
\alpha(r)=\{\alpha^{i,j}(r)  := (\Am^{i,j}(r) r)^\prime = (\Am^{i,j})^\prime (r) r  + \Am^{i,j}(r),\,1\leq i,j\leq d\},
\]
is continuous  and strongly elliptic in the sense that, for some $\eta_\Am>0$,
\[
\xi \cdot\alpha(r)\xi \geq \eta_\Am|\xi|^2,\,\forall\,r\geq 0,\,\forall\,\xi\in\er^d.
\]
\item{\bf \hypmodiiiweaken} For $\Am(r)=\sigma\sigma^*(r)$, the map $r\in[0,\infty)\mapsto \alpha(r)\in\er^d\times\er^d$
given by
\[
\alpha(r)=\{\alpha^{i,j}(r)  := (\Am^{i,j}(r) r)^\prime = (\Am^{i,j})^\prime (r) r  + \Am^{i,j}(r),\,1\leq i,j\leq d\}
\]
is continuous and positive semi-definite:
\[
\xi \cdot \alpha(r)\xi \geq 0,\,\forall\xi\in\er^d.
\]
\end{hypothesis}
The particular strict monotone assumption in Proposition \ref{prop:moderateduniq} can be replaced by the assumption that
\begin{equation}\label{eq:MonotoneMultidim}
\xi \cdot \alpha(r)\xi < \xi\cdot \alpha(r')\xi,\,\forall\xi\in\er^d,\,\forall r,r'\geq 0\,\text{such that}\,r<r'.
\end{equation}
Under \hypmodo, \hypmodibis and \hypmodiii,  Theorem \ref{thm:moderatedexist} can be extended to the existence and uniqueness of a strong solution to
\begin{equation}\label{eq:moderatedmatrixdiff}
\left\{
\begin{aligned}
&X_t =  X_0 + \int_0^t \sigma ( u(s,X_s) ) dW_s, \quad 0\leq t \leq T,\\
&{d}\law(X_t)=u(t,x) dx\,\mbox{ with }\,u \in {L^\infty((0,T)\times\er^d)\cap~ }L^2((0,T){\times}\er^d),\\
&u(0{,x})=u_0{(x)},\,{x\in\er^d},\,\,\sigma:\er^+\rightarrow \er^{d\times d}.
\end{aligned}
\right.
\end{equation}

\subsubsection*{For the existence of a weak solution to \eqref{eq:moderatedmatrixdiff}}
Assumptions \hypmodo, \hypmodibis and \hypmodiii are enough to replicate the proof arguments of the estimates  \ref{eq:smoothednonlinear_ineq_Linfini} and \ref{eq:smoothednonlinear_ineq_energy} in Lemma \ref{lem:WellposednessSmoothpde} and enables to construct, as in Proposition \ref{prop:WellposednessSmoothsde}, a weak solution to
\begin{equation*}
\left\{
\begin{aligned}
&X^{\varepsilon}_t =  X_0 + \int_0^t \sqrt{\Am_{\varepsilon}( u^{\varepsilon}(s,X_s) )} dW_s, \quad 0\leq t \leq T,\\
&{d}\;\!\law(X^{\varepsilon}_t)=u^{\varepsilon}(t,x) dx,\mbox{ with }  u^{\varepsilon} \in {L^\infty((0,T)\times\er^d)\cap~ }L^2((0,T){\times}\er^d),\\
&u^{\varepsilon}({0,x})=u_0{(x)},\,\,\sigma:\er^+\rightarrow \er^d\times\er^d,
\end{aligned}
\right.
\end{equation*}
where $\sqrt{\Am_{\varepsilon}(r)}$ is the square root matrix of $\Am(r)+{\varepsilon} I_d$, {$\Am'(r)=\{(\Am^{i,j})'(r),\,1\leq i,j\leq d\}$}.  Although the identity \eqref{eq:Vasqueztype_ineq} doesn't have any trivial multidimensional extension, {and since} $u^{\varepsilon}$ satisfies the analogous of \eqref{eq:mode_pde_linear}: for all
for all $0\leq T_0\leq T$, $f\in\Cc^{1,2}_c([0,T_0]\times\er^d)$,
\begin{equation}\label{eq:modepdenonlineardiffmatrix}
\begin{aligned}
&\int_{\er^d} u^{\varepsilon}(T_0,x)f(T_0,x)\,dx-\int_{\er^d}u_0(x)f(0,x)\,dx\\
&=\int_{(0,T_0)\times\er^d}u^{\varepsilon}(t,x)\partial_t f(t,x)-\nabla_x u^{\varepsilon}(t,x)\cdot  \Am'_{\varepsilon}({u^{\varepsilon}}(t,x))\nabla_x f(t,x) \,dt\,dx,
\end{aligned}
\end{equation}
the convergence (up to a subsequence) of $\{(X^{\varepsilon}_t,u^{\varepsilon}(t);\,0\leq t\leq T)\}_{{\varepsilon}>0}$ to a weak solution to \eqref{eq:moderatedmatrixdiff} can still be derived from the energy estimate:
\[
\max_{0\leq t\leq T}\Vert u^{\varepsilon}(t)\Vert^2_{L^2(\er^d)}+\eta_\Am\int_0^T\Vert\nabla_x u^{\varepsilon}(t)\Vert^2_{L^2(\er^d)}\,dt\leq \Vert u_0\Vert^2_{L^2(\er^d)},
\]
which follows from \eqref{eq:modepdenonlineardiffmatrix}.

\subsubsection*{For the uniqueness of a strong solution to \eqref{eq:moderatedmatrixdiff}}
Under \hypmodibis and \hypmodiiiweaken, the main arguments of Proposition \ref{prop:moderateduniq} can be extended, replacing $G^\gamma_{s,t}$ by the fundamental solution to $G^\Gamma_{s,t}$ related to the parabolic operator
$L(f)= \partial_sf +\frac{1}{2}\trace\left(\Gamma \nabla^2_xf\right)$
where $\Gamma$ is a (constant) positive definite matrix such that $\xi\cdot \Gamma \xi\geq \gamma^2|\xi|^2, \gamma\neq 0$. Taking two weak solutions $(X^1_t,\,u^1_t;\,t\in[0,T])$ and $(X^2_t,\,u^2_t;\,t\in[0,T])$ to \eqref{eq:modepdenonlineardiffmatrix}, and by replicating the first proof steps of Proposition \ref{prop:moderateduniq}, we get
the analog of \eqref{proofst:mode_weak_1}:
\begin{equation*}
  \Vert u^1-u^2\Vert_{L^2((0,{T})\times\er^d)}=\Vert\frac{1}{2}\sum_{k,l=1}^d\int_0^{.}\partial^2_{x_kx_l}G^\gamma_{.-s}\left(
  \left(\Am(u^1_s)-\Gamma\right)^{k,l}u^1_s-\left(\Am(u^2_s)-\Gamma\right)^{k,l}u^2_s\right)\,ds
  \Vert_{L^2((0,{T})\times\er^d)}.
 \end{equation*}
 Using the following generalization of \eqref{eq:StroockVaradhan-estim} (see Appendix section):
\[
 \Vert \sum_{i,j=1}^d \int_0^{.}\partial^2_{x_ix_j} G^\Gamma_{.-s}(F^{i,j}(s))\,ds\Vert_{L^2((0,{T})\times\er^d)}\leq \frac{2}{\gamma^2}\sum_{i,j=1}^d \Vert F^{i,j} \Vert_{L^2((0,{T})\times\er^d)},\,\,F^{i,j}\in L^2((0,{T})\times\er^d),
\]
 we deduce the analog of \eqref{proofst:uniquenessmod_1}
\begin{equation*}
\Vert u^1-u^2\Vert_{L^2((0,{T})\times\er^d)}\leq \frac{1}{\gamma^2}\sum_{k,l=1}^d\Vert\left(\Gamma-\int_0^1\alpha(u_1+\theta(u_2-u_1))\,d\theta\right)^{k,l} \left|u^1-u^2\right|\Vert_{L^2((0,{T})\times\er^d)}.
\end{equation*}
 Taking $\Gamma$ large enough so that $\xi\cdot( \Gamma-\alpha(r))\xi < 0$, for all $\xi\in\er^d$, $r\geq 0$,  the strict elliptic assumption in \hypmodiii or \eqref{eq:MonotoneMultidim} are sufficient to ensure that $u^1=u^2$.
The uniqueness of a strong solution to \eqref{eq:moderatedmatrixdiff} still follows from Champagnat and Jabin \cite{ChaJab-17}.

\section{Conditional nonlinear diffusion case}\label{sec:ConditionalDiffusion}

\begin{hypothesis}\label{hypo:weak}
\item{\hypcondo} The initial law $\mu_0$ admits a density $\rho_0$ such that $\int_{\er^d\times\er^d} (|x|^2+|y|^2)\rho_0(x,y)\,dx\,dy<\infty$.
\item{\hypcondi} The coefficients $b$ and $\sigma$ are bounded Lipschitz continuous functions
\item{\hypcondii} The kernels $\ell$ and $\gamma$ are bounded and Lipschitz continuous on $\er^d$.
\item{\hypcondiii} Strong ellipticity is assumed for $\sigma$: there exists $a_*>0$ such that, for all $x\in \er^d$,
    \[ a_* |\xi|^2 \leq \xi \sigma(x) \xi,\quad \forall\,\xi\in\er^d. \]
\end{hypothesis}

\begin{hypothesis}\label{hypo:strong}
\item{\hypcondquatre }
The initial marginal density $\rho_X(0,x) = \int_{\er^d}  \rho_0(x,y) dy$ is in $L^1(\er^d)\cap L^p(\er^d)$ for some  $p\geq 2d+2$. \\
Moreover, for all $R>0$, for all $x\in B(0,R)$, there exists a constant $\mu_R>0$ such that $\rho_X(0,x)\geq \mu_R$.

\item{\hypcondcinq } $\sigma$ and $\gamma$ are in $\Cc^2(\er^d)$ with bounded derivatives up to second order.
\item{\hypcondsix } Strong ellipticity is assumed for $\gamma\gamma^*$: there exists $\alpha_*>0$ such that, for all $(x,y)\in \er^d\times\er^d$,
   \[ \alpha_* |\xi|^2  \leq  \xi \gamma(y) \gamma(y)^* \xi,\quad\forall\,\xi\in\er^d. \]
\end{hypothesis}

Our main result concerns the wellposedness (in the weak and strong sense) of a solution to \eqref{eq:NonlinearLangevin}. More precisely, we have
\begin{theorem}\label{thm:wellposedNonlinearLangevin}
Under Hypothesis \ref{hypo:weak}, there exists a unique weak solution to \eqref{eq:NonlinearLangevin}.
With the addition of Hypothesis  \ref{hypo:strong}, pathwise uniqueness holds for the solution of
 \eqref{eq:NonlinearLangevin} {and $\law(X_t,Y_t)$ admits a density function at all time $0\leq t\leq T$}.
\end{theorem}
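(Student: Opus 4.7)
Under Hypothesis~\ref{hypo:weak}, the plan is to establish weak wellposedness via an $L^2(\Omega)$-fixed point scheme built on top of a Girsanov transformation that exploits the strong ellipticity of $\sigma$. Under the additional Hypothesis~\ref{hypo:strong}, I would then upgrade to pathwise uniqueness by extracting enough regularity of the joint density $\rho_t(x,y)$ from the Fokker--Planck equation associated to \eqref{eq:NonlinearLangevin} to put the conditional coefficients in a regime where a classical uniqueness criterion applies.

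For weak existence, the strong ellipticity of $\sigma$ (\hypcondiii) together with the boundedness of $b$ (\hypcondi) allow the Girsanov change of measure $\widetilde{\QQ}$ with density
\[
\frac{d\widetilde{\QQ}}{d\Pp}\bigg|_{\Ff_T}=\exp\Big(-\int_0^T\sigma^{-1}(X_s)b(X_s,Y_s)\cdot dB_s-\tfrac{1}{2}\int_0^T|\sigma^{-1}(X_s)b(X_s,Y_s)|^2\,ds\Big),
\]
under which $X$ solves $dX_t=\sigma(X_t)\,d\widetilde B_t$, with $\widetilde B$ a Brownian motion independent of $W$. This gives a meaning and a unique weak solution to the auxiliary decoupled system
\[
dX_t=b(X_t,Y_t)\,dt+\sigma(X_t)\,dB_t,\qquad dY_t=\lambda(t,X_t)\,dt+\Gamma(t,X_t)\,dW_t,
\]
for any bounded measurable pair $(\lambda,\Gamma)$ with $\|\lambda\|_\infty\leq\|\ell\|_\infty$, $\|\Gamma\|_\infty\leq\|\gamma\|_\infty$: under $\widetilde{\QQ}$, $Y$ reduces to an explicit stochastic integral in $(X,W)$. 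I would then define the Picard map
\[
\Phi(\lambda,\Gamma)(t,x)=\bigl(\Ep[\ell(Y_t)\mid X_t=x],\,\Ep[\gamma(Y_t)\mid X_t=x]\bigr),
\]
the conditional expectations being recovered through Bayes' rule involving the Girsanov density. Its contraction property in $L^2$ follows from the Lipschitz continuity of $\ell,\gamma$ (\hypcondii), Jensen's inequality for conditional expectations, and It\^o's isometry: for two candidates $(\lambda^i,\Gamma^i)$ producing $Y^i$,
\[
\Ez\bigl[|\ell(Y_t^1)-\ell(Y_t^2)|^2+\|\gamma(Y_t^1)-\gamma(Y_t^2)\|^2\bigr]\leq C\,t\int_0^t\Ez\bigl[|\lambda^1-\lambda^2|^2(s,X_s)+\|\Gamma^1-\Gamma^2\|^2(s,X_s)\bigr]\,ds,
\]
which yields a contraction on $[0,T_0]$ for $T_0$ small enough; concatenation of short-time solutions then gives a global weak solution on $[0,T]$. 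Weak uniqueness follows by applying the same estimate to the conditional coefficients of two hypothetical weak solutions, forcing them to coincide and hence equality in law via weak uniqueness of the auxiliary decoupled SDE.

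For pathwise uniqueness under Hypothesis~\ref{hypo:strong}, the obstacle is that the maps $x\mapsto\Lambda[x;\rho_t]$, $x\mapsto\Gamma[x;\rho_t]$ are only measurable a priori. My plan is to combine the uniform ellipticity of the joint diffusion $\mathrm{diag}(\sigma\sigma^*,\gamma\gamma^*)$ (\hypcondiii, \hypcondsix) with the regularity of $\sigma,\gamma$ (\hypcondcinq) and the $L^p$ integrability plus local lower bound of $\rho_X(0,\cdot)$ with $p\geq2d+2$ (\hypcondquatre) to perform a parabolic regularization argument on the Fokker--Planck equation for $\rho_t$: this yields that, at any weak solution, $\rho_t$ exists as an $L^p_{\loc}$-density on $[0,T]\times\er^{2d}$, its $X$-marginal remains locally bounded below (so the denominators in $\Lambda,\Gamma$ do not vanish), and the resulting Sobolev regularity of $\Lambda[\cdot;\rho_t]$, $\Gamma[\cdot;\rho_t]$ matches the hypotheses of Champagnat--Jabin's Theorem~\ref{thm:ChJab-17} applied to the enlarged SDE for $(X,Y)$; the threshold $p\geq 2d+2$ is precisely tuned to this end. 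Pathwise uniqueness then follows directly, with the existence of the density at all times appearing as a by-product.

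\textit{Main obstacle.} The genuinely hard step is the $L^2$-contraction: the conditional expectation is only weakly continuous in the joint law, so no naive topology turns $\Phi$ into a contraction. The Girsanov trick is what fixes the reference law of $X$ independently of the unknown $(\lambda,\Gamma)$, reducing the estimate to a Lipschitz + Jensen computation. For pathwise uniqueness, the delicate point is extracting enough Sobolev regularity of $\rho_t$ to meet the Champagnat--Jabin criterion; this is where the strong ellipticity of $\gamma$ (not only of $\sigma$) and the $L^p$ condition on the initial marginal density become indispensable.
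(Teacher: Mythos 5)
Your overall strategy coincides with the paper's (Girsanov transform to decouple the $X$-equation, fixed point in $L^2(\Omega)$ for the nonlinearity, Sobolev regularity of the conditional coefficient to upgrade to pathwise uniqueness), but two substantive issues make the proposal incomplete as written.

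First, the Picard map $\Phi(\lambda,\Gamma)=\bigl(\Ep[\ell(Y_t)\mid X_t=\cdot],\Ep[\gamma(Y_t)\mid X_t=\cdot]\bigr)$ quietly sidesteps the central difficulty: the Girsanov density $Z$ depends on $Y$, which depends on the unknown $(\lambda,\Gamma)$. The conditional expectations above are under $\PP$, whereas the decoupled SDE you solve lives under $\QQ$; one must pass between the two via Bayes' rule $\Ep[\theta_t\mid X_t]=Z_t\,\Eq[Z_t^{-1}\theta_t\mid X_t]$, and the $Z$-factor is a genuine unknown of the fixed point. The paper's linearized system \eqref{eq:Transformedlinear} carries a third component $Z(\zeta)$ alongside $(X,Y(\zeta))$, and the contraction (Proposition~\ref{prop:LagContraction}) is not a straightforward Lipschitz+Jensen computation: it hinges on Lemma~\ref{lem:ExpMartPrelim}, which bounds $\Eq[Z(\zeta)^2_t\mid X_t]\,Z(\zeta)_t^{-2}$ uniformly and controls $\Eq\bigl[\max_{s\le t}|Z(\zeta)_s-Z(\xi)_s|^2/Z(\zeta)_s^2\bigr]$ by $\int_0^t\Eq[\,\cdot\,|\zeta_s-\xi_s|^2]\,ds$. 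Without tracking those ratios (which requires introducing yet another exponential martingale and a secondary measure $\widetilde{\QQ}$), the contraction estimate you state would not close. Also, your iteration is at the level of deterministic coefficient functions $(\lambda,\Gamma)$, whereas the paper iterates on the process $\zeta\in M^2(0,T)$; the latter is cleaner because it avoids any regularity/measurability assumption on the candidate coefficients and keeps everything in $L^2(\Omega)$.

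Second, the strong uniqueness step is pointed at the wrong criterion. The paper does \emph{not} use Champagnat--Jabin (Theorem~\ref{thm:ChJab-17}) for \eqref{eq:NonlinearLangevin}; it uses Veretennikov's theorem, and the exponent $p\geq 2d+2$ in \hypcondquatre is tuned to Veretennikov's $W^{1,2d+2}_{\loc}$ requirement on the diffusion coefficient (a function of $(t,x)$ only), not to any condition on a density. Champagnat--Jabin would require Sobolev regularity of the \emph{joint} one-dimensional marginal $u_{(X,Y)}(t,\cdot)$ on $\er^{2d}$, and this faces a circularity: to regularize that Fokker--Planck equation you already need the conditional coefficient $\Ep[\gamma(Y_t)\mid X_t=\cdot]$ to be smooth, which is what you are trying to prove. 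Moreover, with $p=q=1$, $p'=q'=\infty$ (as in the moderated case), the coefficient must be $W^{1,\infty}$; here it is only in $L^p((0,T);W^{2,p}_{\loc})$, so the needed exponent balance does not hold. The paper's route is to prove, via the averaging Lemma~\ref{lem:averaging} built on the Bogachev--Krylov--R\"ockner--Shaposhnikov regularity theorem (Proposition~\ref{prop:BoKrRoStEstimates}) together with Aronson's Gaussian bounds and the local lower bound \eqref{eq:lowerboundconditional} on $\rho_X$, that $(t,x)\mapsto\Ep[\gamma(Y_t)\mid X_t=x]$ lies in $L^p((0,T);W^{2,p}_{\loc}(\er^d))$; this is precisely the input of Veretennikov's theorem after identifying $\sigma_d(x)=\sigma(x)$, $\sigma_{d+1}(t,x)=\Ep[\gamma(Y_t)\mid X_t=x]$, and $\sigma_L$ a block assembly. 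Your sketch identifies the right hypotheses (\hypcondquatre, \hypcondcinq, \hypcondsix, $p\geq 2d+2$) but misattributes which theorem they feed and omits the averaging-lemma machinery that actually produces the regularity.
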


Before entering in the details of the proof, let us point out an important remark for the construction of the solution of \eqref{eq:NonlinearLangevin}. Consider for a while the case when $(X_t;\,t\geq 0)$ doesn't depend on $(Y_t;\,t\geq 0)$, namely when $b$ does not depend on $Y$, or the simpler situation when $b=0$. Hypothesis \ref{hypo:weak}  ensure the existence of a unique strong solution for
\begin{equation}\label{proofst:conditionalexists_0}
X_t=X_0+\int_0^t \sigma(X_s)\,dB_s.
\end{equation}
Then, based on the fact that $(X_t;\,t\geq 0)$ is now an exogenous process, we can consider the following fixed point construction, similar to those in Sznitman \cite{Sznitman-89} and M\'el\'eard \cite{Meleard-96}. From now on, we fix  an arbitrary time horizon $0\leq T<+\infty$ and we consider the classical Hilbert space $M^2(0,T)$ of real adapted continuous processes $\zeta = (\zeta_t; 0\leq t\leq T)$ such that $\Ep[\int_0^T \zeta^2_s ds]<+\infty$ ($\Ep$ denoting the expectation under $\PP$),  and endowed with the following scalar product and norm
\[(\zeta,\xi)_{{c}} = \Ep\left[\int _0^T {\exp(-c s)}\zeta_s \xi_s ds\right],\quad \|\zeta \|_c^2 = \Ep\left[\int_0^T  \exp(-c s) |\zeta_s|^2 ds\right],\]
where $c$ is a positive constant that will be chosen later.
Given $(\Omega,\FF,(\Ff_t;\,0\leq t\leq T),\Pp)$ a filtered probability space under which are defined $(W_t;\,0\leq t\leq T)$ and $(B_t;\,0\leq t\leq T)$, two independent $\er^d$-Brownian motions, and $(X_0,Y_0)\sim{\mu}_0$ (independent of $(W_t;\,0\leq t\leq T)$ and $(B_t;\,0\leq t\leq T)$).

From any element $\zeta$ in $M^2(0,T)$, we construct the application $\zeta \mapsto Y(\zeta)$ {taking values} in $M^2(0,T)$ and defined as
\begin{equation}\label{proofst:conditionalexists_1}
Y(\zeta)_t = Y_0 + \int_0^t \Ep\left[\ell(\zeta_s)\,|\,X_s\right] ds+\int_0^t \Ep\left[\gamma(\zeta_s)\,|\,X_s\right] dW_s,\,0\leq t\leq T.
\end{equation}

Owing to \hypcondo and \hypcondii, it is clear that $\|Y(\zeta)\|_c<+\infty$. Now, for $\zeta$ and $\xi$ in $M^2(0,T)$, $0\leq t\leq T$, we have
\begin{align*}
\Ep\left[|Y(\zeta)_t - Y(\xi)_t|^2\right]&\leq2 \int_0^{t}\Ep\left[\big|\Ee_\Pp\left[\ell(\zeta_s)\,|\,X_s\right]
-\Ep\left[\ell(\xi_s)\,|\,X_s\right]\big|^2\right]\,ds\\
& \quad +2 \int_0^{t}\Ee_\Pp\left[\left\|
\Ee_\Pp\left[\gamma(\zeta_s)\,|\,X_s\right]-\Ee_\Pp\left[\gamma(\xi_s)\,|\,X_s\right]
\right\|^2\right]\,ds\\
&\leq 2 \left(\Vert \ell\Vert_{\textit{Lip}}^2+\Vert \gamma\Vert_{Lip}^2\right)\int_0^{t}\Ee_\Pp\left[\Ee_\Pp\left[|\zeta_s - \xi_s|^2\,|\,X_s\right]\right]\,ds\\
& \leq 2 \left(\Vert \ell\Vert_{Lip}^2+\Vert \gamma\Vert_{Lip}^2\right)\int_0^{t}\Ee_\Pp\left[|\zeta_s - \xi_s|^2\right]\,ds.
\end{align*}
Multiplying both sides by $\exp(-ct)$ and integrating (in time) the resulting expression over the interval $(0,T)$ gives
\[
\int_0^T\exp(-ct) \Ep\left[|Y(\zeta)_t - Y(\xi)_t|^2\right]\,dt\leq 2 \left(\Vert \ell\Vert_{Lip}^2+\Vert \gamma\Vert_{Lip}^2\right)\int_0^T\exp(-ct)\int_0^{t}\Ee_\Pp\left[|\zeta_s - \xi_s|^2\right]\,ds\,dt.
\]
An integration by part in time then yields
\begin{align*}
\|Y(\zeta) - Y(\xi)\|_c^2 &\leq -\frac{\exp(-cT)}{c}\int_0^{T}\Ee_\Pp\left[|\zeta_s - \xi_s|^2\right]\,ds+\frac{2}{c} \left(\Vert \ell\Vert_{Lip}^2+\Vert \gamma\Vert_{Lip}^2\right)\int_0^T\exp(-ct)\Ee_\Pp\left[|\zeta_t - \xi_t|^2\right]\,dt\\
&\leq  \frac{2}{c} \left(\Vert \ell\Vert_{Lip}^2+\Vert \gamma\Vert_{Lip}^2\right) \|\zeta - \xi\|^2_c.
\end{align*}
Choosing $c > 2 (\Vert \ell\Vert_{Lip}^2+\Vert \gamma\Vert_{Lip}^2)$, we get the existence of a unique fixed point solution of equation \eqref{eq:NonlinearLangevin}, when $b=0$. In the case $b(x,y)=b(x)$, the same arguments lead to the same result.
\medskip

The rest of this section is dedicated to the proof of
 Theorem \ref{thm:wellposedNonlinearLangevin}, which essentially relies on a Girsanov transform to go back to a situation similar to the previous case.
First, in Section \ref{sec:girsanov}, we collect some preliminary remarks on the Girsanov transform that we use to remove the drift in \eqref{eq:NonlinearLangevin}, and deduce some apriori controls on the associated change of probability measure.

Second, in Section \ref{sec:weak}, we use the fixed point technique for the $L^2$-existence and weak uniqueness for solution to \eqref{eq:NonlinearLangevin}.

Finally in {S}ection \ref{sec:strong}, assuming some stronger regularity on the kernels $\ell$ and $\gamma$ and strong ellipticity on $\gamma$ (see Hypothesis  \ref{hypo:strong}, we obtain some apriori regularity on the nonlinear coefficients using averaging lemma technique, and then deduce the strong uniqueness property.

\subsection{Preliminary remarks on \eqref{eq:NonlinearLangevin}}\label{sec:girsanov}
Fix an arbitrary  $0\leq T <+\infty$  and  let $(X_t,Y_t;\,0\leq t\leq T)$ be a solution to \eqref{eq:NonlinearLangevin} up to $T$, defined on $(\Omega,\Ff,(\Ff_t;\,0\leq t\leq T),\PP)$. Then, define $(Z_t; 0\leq t\leq T)$ as
$$
Z_t=\exp\left\{-\int_0^t\left(\sigma^{-1} b\right)(X_s,Y_s)\,dB_s-\frac{1}{2}\int_0^t \left|\sigma^{-1} b\right|^2(X_s,Y_s)\,ds \right\},
$$
for $\sigma^{-1}(x)$ the inverse matrix of $\sigma(x)$, $(\sigma^{-1}b)(x,y)=\sigma^{-1}(x)b(x,y)$ {and 
$\left|\sigma^{-1} b\right|^2(x,y)=\left(\sigma^{-1} b\right)(x,y)\cdot\left(\sigma^{-1} b\right)(x,y)$}.
Then, under the probability measure $\QQ$ defined on $(\Omega,\Ff_T,(\Ff_t;\,0\leq t\leq T))$ by
$$
\frac{d\QQ}{d\PP}\Big|_{\Ff_T}=Z_T,
$$
the process
$$
\Bmod_t=\int_0^t(\sigma^{-1}b)(X_s,Y_s)\,ds+B_t,\,0\leq t\leq T,
$$
is a $\er^d$-Brownian motion (by means of Girsanov transformation). Observing that the covariation between $(W_t;\,0\leq t\leq T)$ and $(\Bmod_t;\,0\leq t\leq T)$ is zero, $(\Bmod_t;\,0\leq t\leq T)$ is independent of $(W_t;\,0\leq t\leq T)$.

In addition, for all $\Ff_t$-adapted process $(\theta_t;\,0\leq t\leq T)$ such that $\Ep [|\theta_t|] < +\infty $ for all $t$, the characterization of the conditional expectation ensures that, $\PP$-a.s. (or equivalently $\QQ$-a.s.),
\begin{equation}\label{eq:WeightedConditionalExpect}
\Ep[\theta_t\,|\,X_t]=Z_t\Eq[(Z_t)^{-1}\theta_t\,|\,X_t],\,0\leq t\leq T.
\end{equation}
Following this change of probability measure, under $\QQ$, the equation \eqref{eq:NonlinearLangevin} formulate as the following self-contained SDE:
\begin{equation}
\label{eq:TransformedNonlinear}
\left\{
\begin{aligned}
&X_t=X_0+\int_0^t \sigma(X_s)\,d\Bmod_s,\,0\leq t\leq T,\\
&Y_t=Y_0+\int_0^tZ_s\Eq\left[Z^{-1}_s \ell(Y_s)\,|\,X_s\right]\,ds+\int_0^tZ_s\Eq\left[Z^{-1}_s \gamma(Y_s)\,|\,X_s\right]\,dW_s,\\
&Z_t=\exp\left\{-\int_0^t\left(\sigma^{-1} b\right)(X_s,Y_s)\,d\widehat{B}_s+\frac{1}{2}\int_0^t \left|\sigma^{-1} b\right|^2(X_s,Y_s)\,ds\right\},\\
&(X_0,Y_0)\sim \mu_0.
\end{aligned}
\right.
\end{equation}
{Conversely}, starting from \eqref{eq:TransformedNonlinear}, defined on $(\Omega,\Ff_T,(\Ff_t,\,0\leq t\leq T),\QQ)$ {endowed with}
two independent Brownian motions $(\Bmod_t;\,0\leq t\leq T)$ and $(W_t;\,0\leq t\leq T)$, independent to $(X_0,Y_0)$, one can easily check that $(X_t,Y_t;\,0\leq t\leq T)$ is a solution to \eqref{eq:NonlinearLangevin} on $(\Omega,\Ff_T,(\Ff_t,\,0\leq t\leq T),\widehat{\PP})$ where $\widehat{\PP}$ is given by
$$
\frac{d\widehat{\PP}}{d\QQ}\Big|_{\Ff_T}=Z^{-1}_T=\exp\left\{\int_0^t\left(\sigma^{-1} b\right)(X_s,Y_s)\,d\Bmod_s-\frac{1}{2}\int_0^t \left|\sigma^{-1} b\right|^2(X_s,Y_s)
\,ds\right\},\,0\leq t\leq T.
$$

The existence and uniqueness of a weak solution to \eqref{eq:NonlinearLangevin} is then an immediate consequence of the existence and uniqueness of a weak solution to \eqref{eq:TransformedNonlinear}.

Let $\zeta$ in $M^2(0,T)$. We consider the linearized system
\begin{equation}
\label{eq:Transformedlinear_bis}
\left\{
\begin{aligned}
&X_t=X_0+\int_0^t \sigma(X_s)\,d\Bmod_s,\,0\leq t\leq T,\\
&Z(\zeta)_t=\exp\left\{-\int_0^t\left(\sigma^{-1} b\right)(X_s,\zeta_s)\,d\widehat{B}_s+\frac{1}{2}\int_0^t \left|\sigma^{-1} b\right|^2(X_s,\zeta_s) ds\right\},\\
&(X_0,Y_0) \sim \mu_0.
\end{aligned}
\right.
\end{equation}

\begin{lemma}\label{lem:ExpMartPrelim}
Assume Hypothesis \ref{hypo:weak}. Let $\zeta$  and $\xi$ in $M^2(0,T)$.
\begin{itemize}
\item[(i)] There exits a positive constant $C$ depending only on $T$ and on $\|\sigma^{-1}b\|_{\infty}$ such that
\[\Eq \big [Z(\zeta)^2_t | X_t] (Z(\zeta)_t)^{-2} \leq  C.\]
\item[(ii)]  There exits a positive constant $C'$ depending only on $T$,  $\|\sigma^{-1}b\|_{\infty}$ and $\|\ell\|_{Lip} + \|\gamma\|_{Lip} $ such that
\begin{align}
\Eq\left[ \max_{0\leq s\leq t} \frac{|Z(\zeta)^2_s - Z(\xi)_s|^2}{Z(\zeta)^2_s} \right] \leq C' \int_0^t
\Eq\left[ \frac{|Z(\zeta)_s - Z(\xi)_s|^2}{Z(\zeta)^2_s} +  |\zeta_s -\xi_s|^2 \right] ds.
\end{align}
\end{itemize}
\end{lemma}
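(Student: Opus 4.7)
The plan is to exploit the explicit exponential (Doléans--Dade) structure of $Z(\zeta)$ in both parts, combined with Burkholder--Davis--Gundy in part (ii).

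For part (i), I would first factor
$$Z(\zeta)_t^{\pm 2} = \mathcal{M}_t^{(\mp 2)}(\zeta)\cdot \exp\left\{k_\pm \int_0^t |(\sigma^{-1}b)(X_s,\zeta_s)|^2\,ds\right\},$$
where $\mathcal{M}_t^{(\mp 2)}(\zeta) := \exp\{\mp 2\int_0^t(\sigma^{-1}b)(X_s,\zeta_s)\,d\widehat{B}_s - 2\int_0^t|(\sigma^{-1}b)(X_s,\zeta_s)|^2 ds\}$ is the Doléans--Dade exponential of $\mp 2\int(\sigma^{-1}b)\,d\widehat{B}$ and $k_\pm\in\{1,3\}$ are explicit constants coming from completing the square. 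By \hypcondi and \hypcondiii the integrand $\sigma^{-1}b$ is bounded, so Novikov's criterion ensures that the $\mathcal{M}^{(\pm 2)}_t(\zeta)$ are genuine $\QQ$-martingales, and in fact all integer moments of $Z(\zeta)_t^{\pm p}$ are controlled by $\exp(c_p T\|\sigma^{-1}b\|_\infty^2)$. The deterministic factors are pointwise bounded in $[1,e^{k_\pm T\|\sigma^{-1}b\|_\infty^2}]$. Taking conditional expectation given $X_t$ and pulling out the deterministic factor reduces $\Eq[Z(\zeta)_t^2\mid X_t]\,Z(\zeta)_t^{-2}$, up to a multiplicative $e^{cT\|\sigma^{-1}b\|_\infty^2}$, to $\Eq[\mathcal{M}_t^{(-2)}(\zeta)\mid X_t]\cdot \mathcal{M}_t^{(2)}(\zeta)$. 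A Cauchy--Schwarz argument on the outer expectation combined with the conditional-Jensen bound $\Eq[\mathcal{M}_t^{(\pm2)}\mid X_t]^2\le\Eq[(\mathcal{M}_t^{(\pm2)})^2\mid X_t]$ and the finite moment bounds of the Doléans--Dade martingales produces the uniform constant $C=C(T,\|\sigma^{-1}b\|_\infty)$.

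For part (ii), I would study the ratio process $R_s:=Z(\xi)_s/Z(\zeta)_s$, so that $R_0=1$ and $R_s-1 = (Z(\xi)_s-Z(\zeta)_s)/Z(\zeta)_s$. Itô's formula applied to the product $Z(\xi)_s\cdot Z(\zeta)_s^{-1}$ (using $dZ(\zeta)/Z(\zeta) = -(\sigma^{-1}b)(X,\zeta)\,d\widehat{B}+|(\sigma^{-1}b)(X,\zeta)|^2\,ds$ and its inverse $dZ(\zeta)^{-1}/Z(\zeta)^{-1}=(\sigma^{-1}b)(X,\zeta)\,d\widehat{B}$) yields
$$dR_s = R_s\,\Delta a_s\cdot d\widehat{B}_s - R_s\,(\sigma^{-1}b)(X_s,\xi_s)\cdot\Delta a_s\,ds,\quad \Delta a_s := (\sigma^{-1}b)(X_s,\zeta_s)-(\sigma^{-1}b)(X_s,\xi_s).$$
The BDG inequality on the stochastic integral piece together with Cauchy--Schwarz on the drift piece produces
$$\Eq\Bigl[\max_{0\le s\le t}(R_s-1)^2\Bigr]\le K\int_0^t\Eq[R_s^2|\Delta a_s|^2]\,ds.$$
The Lipschitz property of $b$ in $y$ (\hypcondi) combined with the boundedness of $\sigma^{-1}$ (\hypcondiii) gives $|\Delta a_s|\le L|\zeta_s-\xi_s|$. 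The decomposition $R_s^2\le 2(R_s-1)^2+2$ converts $R_s^2|\zeta_s-\xi_s|^2$ into $2(R_s-1)^2|\zeta_s-\xi_s|^2+2|\zeta_s-\xi_s|^2$; the cross term is then handled by invoking part (i) (after conditioning on $X_s$ and using that $R_s^2=Z(\xi)_s^2\cdot Z(\zeta)_s^{-2}$ is structurally of the same type as the quantity bounded in (i)) to reduce it to a linear combination of $\Eq[(R_s-1)^2]$ and $\Eq[|\zeta_s-\xi_s|^2]$.

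The main obstacle is part (i): the fact that $Z(\zeta)_t^{-2}$ is not $\sigma(X_t)$-measurable makes the interaction with $\Eq[Z(\zeta)_t^2\mid X_t]$ delicate, and the argument must exploit the specific Doléans--Dade decomposition in an essential way so that only the $\widehat{B}$-stochastic-integral part enters the conditional expectation while the potentially unbounded exponents $\int_0^t|\sigma^{-1}b|^2\,ds$ are absorbed into a deterministic constant depending only on $T$ and $\|\sigma^{-1}b\|_\infty$. Once (i) is in hand, part (ii) is a fairly standard BDG-plus-Grönwall computation.
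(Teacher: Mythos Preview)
Your approach to part (ii) via the ratio $R_s=Z(\xi)_s/Z(\zeta)_s$ is correct and in fact slightly cleaner than the paper's direct It\^o computation on $|Z(\zeta)-Z(\xi)|^2/Z(\zeta)^2$. Note, however, that the detour through part (i) is unnecessary: since $\sigma^{-1}b$ is bounded, $|\Delta a_s|\le 2\|\sigma^{-1}b\|_\infty$, so $(R_s-1)^2|\Delta a_s|^2\le 4\|\sigma^{-1}b\|_\infty^2(R_s-1)^2$ directly, and the splitting $R_s^2|\Delta a_s|^2\le 2(R_s-1)^2|\Delta a_s|^2+2|\Delta a_s|^2$ together with the Lipschitz bound $|\Delta a_s|\le L|\zeta_s-\xi_s|$ on the second term already gives the required integrand.

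Part (i), however, has a genuine gap. The inequality to be proved is an \emph{almost sure} bound: $\Eq[Z(\zeta)_t^2\mid X_t]\,Z(\zeta)_t^{-2}\le C$ must hold $\QQ$-a.s., since this is how it is used later (pulling the factor out of an expectation). Your reduction to $\Eq[\mathcal{M}_t^{(-2)}(\zeta)\mid X_t]\cdot\mathcal{M}_t^{(2)}(\zeta)$ up to bounded multiplicative factors is fine, but the subsequent ``Cauchy--Schwarz on the outer expectation combined with conditional-Jensen'' argument only controls moments of this product, not its pointwise value. There is no reason the random variable $\Eq[\mathcal{M}_t^{(-2)}\mid X_t]\cdot\mathcal{M}_t^{(2)}$ should be bounded merely because both factors have finite moments.

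The missing idea is a second change of measure. Writing $Z(\zeta)_t^2=\EEE_t\exp\{3\int_0^t|\sigma^{-1}b|^2\,ds\}$ with $\EEE$ the Dol\'eans--Dade exponential of $-2\int(\sigma^{-1}b)\,d\widehat B$, one has the \emph{pathwise} identity
\[
\EEE_t^{-1}\,\Eq[\EEE_t\,\theta_t\mid X_t]=\Ez[\theta_t\mid X_t],\qquad \frac{d\widetilde\QQ}{d\QQ}\Big|_{\Ff_T}=\EEE_T,
\]
which is exactly the weighted-conditional-expectation formula \eqref{eq:WeightedConditionalExpect} applied with the pair $(\QQ,\widetilde\QQ)$ in place of $(\PP,\QQ)$. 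Applying this with $\theta_t=\exp\{3\int_0^t|\sigma^{-1}b|^2\,ds\}$ (which is bounded by $e^{3T\|\sigma^{-1}b\|_\infty^2}$) immediately yields the a.s.\ bound. This trick---that the problematic product is itself a conditional expectation under an auxiliary measure---is what makes the pointwise control work, and your proposal does not reach it.
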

\begin{proof}
For \textit{(i)}, we fix  $\zeta\in M^2(0,T)$. We denote by ${\EEE}$ the exponential martingale {(under $\QQ$)} defined as
\[\EEE_t = \exp\left\{-\int_0^t 2 \left(\sigma^{-1} b\right)(X_s,\zeta_s)\,d\widehat{B}_s{-} \int_0^t 2 \left|\sigma^{-1} b\right|^2(X_s,\zeta_s) ds\right\}.\]
From \eqref{eq:Transformedlinear_bis},  we have
\[Z(\zeta)^2_t=\EEE_t \exp\left\{{+3} \int_0^t \left|\sigma^{-1} b\right|^2(X_s,\zeta_s) ds\right\}\]
and then
\begin{align*}
\Eq \big [(Z(\zeta))^2_t | X_t\big ] (Z(\zeta)_t)^{-2}
&=\EEE^{-1}_t \Eq \left[\EEE_t\exp\left\{{+3}\int_0^t \left|\sigma^{-1} b\right|^2(X_s,\zeta_s) ds\right\}   | X_t\right] \exp\left\{ {-3}\int_0^t \left|\sigma^{-1} b\right|^2(X_s,\zeta_s) ds\right\}\\
&\quad \leq \EEE^{-1}_t \Eq \left[\EEE_t\exp\left\{{+3} \int_0^t \left|\sigma^{-1} b\right|^2(X_s,\zeta_s) ds\right\}   | X_t\right].
\end{align*}
Let us define the probability measure $\widetilde{\QQ}$ on $(\Omega,\Ff_T,(\Ff_t;\,0\leq t\leq T))$ by
$$
\frac{d\widetilde{\QQ}}{d\QQ}\,\Big|_{\Ff_T}=\EEE_T.
$$
Then, as in \eqref{eq:WeightedConditionalExpect}, we  obtain that for all $\Ff_t$-adapted process $(\theta_t;\,0\leq t\leq T)$ such that $\Ez [|\theta_t|] < +\infty $ for all $t$ in $[0,T]$, the characterization of the conditional expectation ensures that, $\PP$-a.s. (or equivalently $\QQ$-a.s.),
\begin{align*}
\Ez[\theta_t\,|\,X_t]= \EEE^{-1}_t\Eq[\EEE_t\theta_t\,|\,X_t],\,0\leq t\leq T,
\end{align*}
from which we immediately deduce that

\[\EEE^{-1}_t \Eq \left[\EEE_t\exp\left\{{+3} \int_0^t \left|\sigma^{-1} b\right|^2(X_s,\zeta_s) ds\right\}   | X_t\right]\leq \exp({3}T \|\sigma^{-1} b\|_{\infty}).\]

For \textit{(ii)}, we fix again  a $\zeta\in M^2(0,T)$. From \eqref{eq:Transformedlinear_bis},  we have
\begin{align*}
dZ(\zeta)_t = Z(\zeta)_t \left( - (\sigma^{-1} b)(X_t,\zeta_t)\,d\widehat{B}_t  + |\sigma^{-1} b|^2(X_t,\zeta_t)\,dt\right),\,Z(\zeta)_0=1,
\end{align*}
and
\begin{align*}
dZ(\zeta)^{-1}_t = Z(\zeta)^{-1}_t  (\sigma^{-1} b)(X_t,\zeta_t)\,d\widehat{B}_t,\,Z(\zeta)_0=1,
\end{align*}
from which we compute, using the It{\^o} formula, for a   $\xi\in M^2(0,T)$
\begin{align*}
& |Z(\zeta)_t - Z(\xi)_t|^2  \\
& =   2 \int_0^t (Z(\zeta)_s - Z(\xi)_s)
\left\{ Z(\zeta)_s(\sigma^{-1} b)(X_s,\zeta_s)
- Z(\xi)_s (\sigma^{-1} b)(X_s,\xi_s)\right\}\,d\widehat{B}_s  \\
& \quad +  2 \int_0^t (Z(\zeta)_s - Z(\xi)_s)
\left\{Z(\zeta)_s |\sigma^{-1} b|^2(X_s,\zeta_s)  -
Z(\xi)_s|\sigma^{-1} b|^2(X_s,\xi_s)\right\} ds  \\
& \quad +  \int_0^t
\left(Z(\zeta)_s(\sigma^{-1} b)(X_s,\zeta_s)- (Z(\xi)_s\sigma^{-1} b)(X_s,\xi_s)
 \right)^2ds
\end{align*}
and
\begin{align*}
d (Z(\zeta)^{-1}_t)^2  =   (Z(\zeta)^{-1}_t)^2 \left( 2   (\sigma^{-1} b)(X_t,\zeta_t)\,d\widehat{B}_t
 +  |\sigma^{-1} b|^2 (X_t,\zeta_t) dt \right).
\end{align*}
Applying again the  It{\^o} formula,
\begin{align*}
& \frac{|Z(\zeta)_t - Z(\xi)_t|^2}{Z(\zeta)^{2}_t}  \\
& = \int_0^t \frac{|Z(\zeta)_s - Z(\xi)_s|^2}{Z(\zeta)^{2}_s} \left( 2   (\sigma^{-1} b)(X_s,\zeta_s)\,d\widehat{B}_s
 + |\sigma^{-1} b|^2 (X_s,\zeta_s) ds \right)\\
&\quad +  2 \int_0^t \frac{(Z(\zeta)_s - Z(\xi)_s)}{Z(\zeta)^{2}_s}
\left\{ Z(\zeta)_s(\sigma^{-1} b)(X_s,\zeta_s)
- Z(\xi)_s (\sigma^{-1} b)(X_s,\xi_s)\right\}\,d\widehat{B}_s  \\
& \quad +  2 \int_0^t \frac{(Z(\zeta)_s - Z(\xi)_s)}{Z(\zeta)^{2}_s}
\left\{Z(\zeta)_s |\sigma^{-1} b|^2(X_s,\zeta_s)  -
Z(\xi)_s|\sigma^{-1} b|^2(X_s,\xi_s)\right\} ds  \\
& \quad  +  \int_0^t \frac{1}{Z(\zeta)^{2}_s}
\left|Z(\zeta)_s(\sigma^{-1} b)(X_s,\zeta_s)- (Z(\xi)_s\sigma^{-1} b)(X_s,\xi_s) \right|^2
ds  \\
& \quad  +
  4 \int_0^t \frac{(Z(\zeta)_s - Z(\xi)_s)}{Z(\zeta)^{2}_s}
\Big(   Z(\zeta)_s|\sigma^{-1} b|^2(X_t,\zeta_s) -
Z(\xi)_s (\sigma^{-1} b)(X_s,\xi_s)(\sigma^{-1} b)(X_s,\zeta_s)\Big) ds.
\end{align*}
$Z(\cdot)$ being an exponential martingale, the $L^2$-integrability of each integrands in the right-hand side of the preceding expression derive from the boundedness of $\sigma^{-1}b$.
For any terms of the form
\[\frac{1}{Z(\zeta)^{2}_s} \left\{Z(\zeta)_s g(X_s,\zeta_s)  -
Z(\xi)_s g (X_s,\xi_s)\right\}, \]
for $g$ equal to   $\sigma^{-1} b$  or $|\sigma^{-1} b|^2$, we add and subtract the same element $Z(\zeta)_s g(X_s,\zeta_s)$ to get
\begin{align*}
\frac{1}{Z(\zeta)_s} \Big( g(X_s,\zeta_s)  -  g(X_s,\xi_s) \Big)  + \frac{1}{Z(\zeta)^{2}_s} (Z(\zeta)_s - Z(\zeta)_s)  g(X_s,\xi_s).
\end{align*}
Noticing that $\sigma^{-1} b$ and $|\sigma^{-1} b|^2$ are bounded Lipschitz, by taking the expectation, and  by introducing the appropriate pivots in the three last integrals, we get
\begin{align*}
\Eq\left[\frac{|Z(\zeta)_t - Z(\xi)_t|^2}{Z(\zeta)^{2}_t}  \right]
& \leq C \|\sigma^{-1} b \|^2_{\infty}\int_0^t  \Eq\left[ \frac{|Z(\zeta)_s - Z(\xi)_s|^2}{Z(\zeta)^{2}_s}\right]
 ds \\
 &\quad  + C \min\left(\|\sigma^{-1} b \|_{Lip},2 \|\sigma^{-1} b \|_{\infty}\right)  \int_0^t  \Eq\left[\frac{|Z(\zeta)_s - Z(\xi)_s||\zeta_s - \xi_s|}{Z(\zeta)_s}\right] ds.
\end{align*}
We end the proof of \textit{(ii)} by applying Young's inequality.
\end{proof}

\subsection{$L^2$-existence and weak uniqueness}\label{sec:weak}

\begin{equation}
\label{eq:Transformedlinear}
\left\{
\begin{aligned}
&X_t=X_0+\int_0^t \sigma(X_s)\,d\Bmod_s,\,0\leq t\leq T,\\
&Y(\zeta)_t=Y_0+\int_0^tZ(\zeta)_s\Eq\left[Z(\zeta)^{-1}_s \ell(\zeta_s)\,|\,X_s\right]\,ds+\int_0^tZ(\zeta)_s\Eq\left[Z(\zeta)^{-1}_s \gamma(\zeta_s)\,|\,X_s\right]\,dW_s,\\
&Z(\zeta)_t=\exp\left\{-\int_0^t\left(\sigma^{-1} b\right)(X_s,\zeta_s)\,d\widehat{B}_s+\frac{1}{2}\int_0^t \left|\sigma^{-1} b\right|^2(X_s,\zeta_s)\,ds\right\},\\
&(X_0,Y_0)\sim \mu_0.
\end{aligned}
\right.
\end{equation}
Remark that, from  \eqref{eq:WeightedConditionalExpect}, for any bounded Borel function $g:\er^d\rightarrow\er$,
\begin{align}
Z(\zeta)_t\Eq[(Z(\zeta)_t)^{-1} |g(\zeta_t)|\,|\,X_t] = \Ep[|g(\zeta_t)| \,|\,X_t] \leq \|g\|_{L^\infty} ,\,0\leq t\leq T,
\end{align}
so that, $(Y(\zeta)_t;\,0\leq t\leq T)$ is in $M^2(0,T)$. In addition, we have
\begin{proposition}\label{prop:LagContraction} There exists $0<C<\infty$ depending only on $T$, $\Vert(\sigma^{-1}b)(\sigma^{-1}b)^*\Vert_{L^\infty}$, $\Vert\ell\Vert_{L^\infty}$ such that, for all $\zeta,\xi\in M^2(0,T)$, for all $0\leq t\leq T$,
\[
\Eq\left[|Y(\zeta)_t - Y(\xi)_t|^2\right]\leq C\int_0^t\Eq\left[\frac{|Z(\zeta)_s - Z(\xi)_s|^2}{|Z(\xi)_s|^2}+\frac{|Z(\zeta)_s - Z(\xi)_s|^2}{|Z(\zeta)_s|^2}+|\zeta_s-\xi_s|^2\right]\,ds.
\]
\end{proposition}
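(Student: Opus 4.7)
The key observation is that, under $\QQ$, the process $W$ remains a Brownian motion independent of $\Bmod$, because the Girsanov density in \eqref{eq:Transformedlinear} involves only $B$ (no cross-variation with $W$). Hence the $dW$-integral in the second line of \eqref{eq:Transformedlinear} is a genuine $\QQ$-martingale. Setting $A_h(\zeta)_s:=Z(\zeta)_s\Eq[Z(\zeta)^{-1}_s h(\zeta_s)|X_s]$ for $h\in\{\ell,\gamma\}$, the difference $Y(\zeta)_t-Y(\xi)_t$ is the sum of $\int_0^t(A_\ell(\zeta)_s-A_\ell(\xi)_s)\,ds$ and $\int_0^t(A_\gamma(\zeta)_s-A_\gamma(\xi)_s)\,dW_s$. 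I would apply Cauchy-Schwarz in time to the drift and the It\^o isometry under $\QQ$ to the stochastic integral, so that it suffices to prove a pointwise-in-$s$ bound on $\Eq[|A_h(\zeta)_s-A_h(\xi)_s|^2]$ by the integrand on the right-hand side of the proposition, for both $h=\ell$ and $h=\gamma$.

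I would split $A_h(\zeta)-A_h(\xi)=\phi_h+\psi_h$, with $\phi_h:=Z(\zeta)\Eq[Z(\zeta)^{-1}(h(\zeta)-h(\xi))|X]$ isolating the Lipschitz contribution and $\psi_h:=Z(\zeta)\Eq[Z(\zeta)^{-1}h(\xi)|X]-Z(\xi)\Eq[Z(\xi)^{-1}h(\xi)|X]$ the weight mismatch. Conditional Jensen on $\phi_h$ gives $|\phi_h|^2\leq \|h\|_{Lip}^2\,Z(\zeta)^2\,\Eq[Z(\zeta)^{-2}|\zeta-\xi|^2|X]$; taking $\Eq$ and invoking the tower identity $\Eq[f\,\Eq[g|X]]=\Eq[\Eq[f|X]\,g]$ with $f=Z(\zeta)^2$, together with $\Eq[Z(\zeta)^2|X]\leq CZ(\zeta)^2$ from Lemma~\ref{lem:ExpMartPrelim}(i), collapses the $Z^{\pm 2}$ factors and delivers a bound of the form $C\|h\|_{Lip}^2\,\Eq[|\zeta-\xi|^2]$. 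For $\psi_h$, I would use the decomposition $\psi_h=Z(\zeta)\Eq[(Z(\zeta)^{-1}-Z(\xi)^{-1})h(\xi)|X]+(Z(\zeta)-Z(\xi))\Eq[Z(\xi)^{-1}h(\xi)|X]$. The second summand is handled directly via the Girsanov identity~\eqref{eq:WeightedConditionalExpect}, which yields $|Z(\xi)\Eq[Z(\xi)^{-1}h(\xi)|X]|\leq\|h\|_\infty$ and hence a bound by $\|h\|_\infty^2\,\Eq[(Z(\zeta)-Z(\xi))^2/Z(\xi)^2]$. The first summand, after Jensen and the factorisation $(Z(\zeta)^{-1}-Z(\xi)^{-1})^2=(Z(\zeta)-Z(\xi))^2/(Z(\zeta)^2 Z(\xi)^2)$, reduces to the same tower-swap pattern applied with $f=Z(\zeta)^2$ and produces $C\|h\|_\infty^2\,\Eq[(Z(\zeta)-Z(\xi))^2/Z(\xi)^2]$. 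The companion term $\Eq[(Z(\zeta)-Z(\xi))^2/Z(\zeta)^2]$ arises by running the whole argument with the mirror splitting $\psi_h=(Z(\zeta)-Z(\xi))\Eq[Z(\zeta)^{-1}h(\xi)|X]+Z(\xi)\Eq[(Z(\zeta)^{-1}-Z(\xi)^{-1})h(\xi)|X]$.

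The main obstacle is the tower-swap step itself: because $Z(\zeta)_s$ is $\Ff_s$-measurable but not $\sigma(X_s)$-measurable, $Z(\zeta)^2$ cannot be moved under the inner conditional expectation $\Eq[\cdot|X_s]$ by a simple pull-out; yet the identity $\Eq[f\,\Eq[g|X]]=\Eq[\Eq[f|X]\,g]$ (immediate since both sides equal $\Eq[\Eq[f|X]\Eq[g|X]]$ after double conditioning on $\sigma(X_s)$), paired with the a.s.\ control $\Eq[Z(\zeta)_s^2|X_s]\leq CZ(\zeta)_s^2$ of Lemma~\ref{lem:ExpMartPrelim}(i), is precisely what allows the outer $Z(\zeta)^2$ to be collapsed against the $1/Z(\zeta)^2$ inside and recovers the required form of the right-hand side; once this mechanism is in place, all the terms are routine.
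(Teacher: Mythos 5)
Your proposal is correct and follows essentially the same underlying mechanism as the paper: a pivot decomposition separating the Lipschitz contribution from the Girsanov-weight mismatch, conditional Jensen/Cauchy--Schwarz to pass the square inside the conditional expectation, the weighted-conditional-expectation identity \eqref{eq:WeightedConditionalExpect} to absorb the $\|h\|_\infty$ factor, and the tower identity together with Lemma~\ref{lem:ExpMartPrelim}(i) to collapse the $Z^{\pm 2}$ factors. The one genuine difference is the initial reduction. The paper applies It\^o's formula to $|Y(\zeta)_t-Y(\xi)_t|^2$ and then Young's inequality on the cross term, which introduces an extra $\tfrac{1}{2}\int_0^t\Eq[|Y(\zeta)_s-Y(\xi)_s|^2]\,ds$ that never reappears in the stated bound; strictly, the paper therefore needs an unstated Gronwall argument (or to keep that term in the proposition). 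Your route---write $Y(\zeta)_t-Y(\xi)_t$ as a $ds$-integral plus a $dW$-integral, apply Cauchy--Schwarz in time to the former and the It\^o isometry under $\QQ$ to the latter, having first observed that $W$ stays a $\QQ$-Brownian motion because the Girsanov exponent has zero cross-variation with $W$---bypasses that issue entirely and lands directly on the stated inequality. This is a small but real improvement in rigour over the paper's write-up; the remainder of the two proofs coincides up to the choice of which conditional-expectation term serves as the pivot.
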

\begin{proof} Applying It\^o's formula, we get that
\begin{align*}
&|Y(\zeta)_t - Y(\xi)_t|^2\\
& =2\int_0^t\left(Y(\zeta)_s - Y(\xi)_s\right)\left(Z(\zeta)_s\Eq\left[Z(\zeta)^{-1}_s \ell(\zeta_s)\,|\,X_s\right]-Z(\xi)_s\Eq\left[Z(\xi)^{-1}_s \ell(\xi_s)\,|\,X_s\right]\right)\,ds\\
&\quad +\int_0^t\left(Z(\zeta)_s\Eq\left[Z(\zeta)^{-1}_s \gamma(\zeta_s)\,|\,X_s\right]-Z(\xi)_s\Eq\left[Z(\xi)^{-1}_s \gamma(\xi_s)\,|\,X_s\right]\right)\,dW_s\\
&\quad +\int_0^t\text{Trace}\left(\left(Z(\zeta)_s\Eq\left[Z(\zeta)^{-1}_s \gamma(\zeta_s)\,|\,X_s\right]-Z(\xi)_s\Eq\left[Z(\xi)^{-1}_s \gamma(\xi_s)\,|\,X_s\right]\right)\right.\\
&\quad\quad\quad\times\left.\left(Z(\zeta)_s\Eq\left[Z(\zeta)^{-1}_s \gamma(\zeta_s)\,|\,X_s\right]-Z(\xi)_s\Eq\left[Z(\xi)^{-1}_s \gamma(\xi_s)\,|\,X_s\right]\right)^*\right)\,ds.
\end{align*}
Taking the expectation on both sides of the preceding equality ($L^2$ integrability  is again {e}nsured from the boundedness of the coefficients combined with Identity  \eqref{eq:WeightedConditionalExpect}), it follows
\begin{align}\label{proofst:conditionalexists_2}
\begin{aligned}
&\Eq\left[|Y(\zeta)_t - Y(\xi)_t|^2\right]\\
&=2\int_0^t\Eq\left[\left(Y(\zeta)_s - Y(\xi)_s\right)\left(Z(\zeta)_s\Eq\left[Z(\zeta)^{-1}_s \ell(\zeta_s)\,|\,X_s\right]-Z(\xi)_s\Eq\left[Z(\xi)^{-1}_s \ell(\xi_s)\,|\,X_s\right]\right)\right]\,ds\\
&\quad +\int_0^t\Eq\left[\text{Trace}\left(\left(Z(\zeta)_s\Eq\left[Z(\zeta)^{-1}_s \gamma(\zeta_s)\,|\,X_s\right]-Z(\xi)_s\Eq\left[Z(\xi)^{-1}_s \gamma(\xi_s)\,|\,X_s\right]\right)\right.\right.\\
&\qquad\qquad\qquad\times\left.\left.\left(Z(\zeta)_s\Eq\left[Z(\zeta)^{-1}_s \gamma(\zeta_s)\,|\,X_s\right]-Z(\xi)_s\Eq\left[Z(\xi)^{-1}_s \gamma(\xi_s)\,|\,X_s\right]\right)^t\right)\right]\,ds.
\end{aligned}
\end{align}
By Young's inequality, for the first integral on the r.h.s., we have
\begin{align*}
&\int_0^t\Eq\left[\left(Y(\zeta)_s - Y(\xi)_s\right)\left(Z(\zeta)_s\Eq\left[Z(\zeta)^{-1}_s \ell(\zeta_s)\,|\,X_s\right]-Z(\xi)_s\Eq\left[Z(\xi)^{-1}_s \ell(\xi_s)\,|\,X_s\right]\right)\right]\,ds\\
&\leq \frac{1}{2}\int_0^t\Eq\left[\left|Y(\zeta)_s - Y(\xi)_s\right|^2\right]\,ds\\
& \quad +\frac{1}{2}\int_0^t
\Eq\left[\left|Z(\zeta)_s\Eq\left[Z(\zeta)^{-1}_s \ell(\zeta_s)\,|\,X_s\right]-Z(\xi)_s\Eq\left[Z(\xi)^{-1}_s \ell(\xi_s)\,|\,X_s\right]\right|^2\right]\,ds.
\end{align*}
In the last integral, adding and subtracting $Z(\xi)_s\Eq\left[Z(\zeta)^{-1}_s \ell(\zeta_s)\,|\,X_s\right]$ yields
\begin{align*}
  &\Eq\left[\left|Z(\zeta)_s\Eq\left[Z(\zeta)^{-1}_s \ell(\zeta_s)\,|\,X_s\right]-Z(\xi)_s\Eq\left[Z(\xi)^{-1}_s \ell(\xi_s)\,|\,X_s\right]\right|^2\right] \\
  &\leq 2 \Eq\left[\left(Z(\zeta)_s-Z(\xi)_s\right)^2\left(\Eq\left[Z(\zeta)^{-1}_s \ell(\zeta_s)\,|\,X_s\right]\right)^2\right]+
  2\Eq\left[Z(\zeta)^2_s\left|\Eq\left[Z(\xi)^{-1}_s \ell(\xi_s)-Z(\zeta)^{-1}_s \ell(\zeta_s)\,|\,X_s\right]\right|^2\right].
\end{align*}
The identity \eqref{eq:WeightedConditionalExpect} then  ensures that
\begin{align*}
\Eq\left[\left(Z(\zeta)_s-Z(\xi)_s\right)^2\left(\Eq\left[Z(\zeta)^{-1}_s \ell(\zeta_s)\,|\,X_s\right]\right)^2\right]
& =\Eq\left[\frac{\left(Z(\zeta)_s-Z(\xi)_s\right)^2}{Z(\zeta)^2_s}\left(Z(\zeta)_s\Eq\left[Z(\zeta)^{-1}_s \ell(\zeta_s)\,|\,X_s\right]\right)^2\right]\\
&\quad  \leq \Vert\ell\Vert_{L^\infty} \Eq\left[\frac{\left(Z(\zeta)_s-Z(\xi)_s\right)^2}{Z(\zeta)^2_s}\right].
\end{align*}
Using the properties of the conditional expectation, we also have
\begin{align}\label{eq:aux_pivot1}
\begin{aligned}
&\Eq\left[Z(\zeta)^2_s\left|\Eq\left[Z(\xi)^{-1}_s \ell(\xi_s)-Z(\zeta)^{-1}_s \ell(\zeta_s)\,|\,X_s\right]\right|^2\right]\\
&\leq \Eq\left[Z(\zeta)^2_s\Eq\left[\left|Z(\xi)^{-1}_s \ell(\xi_s)-Z(\zeta)^{-1}_s \ell(\zeta_s)\right|^2\,|\,X_s\right]\right]\\
&\leq 2\Vert \ell\Vert^2_{L^\infty}  \Eq\left[Z(\zeta)^2_s\Eq\left[\left(Z(\xi)^{-1}_s -Z(\zeta)^{-1}_s\right)^2 \Big|\,X_s\right]\right]\\
&\quad +2\Eq\left[Z(\zeta)^2_s\Eq\left[Z(\zeta)^{-2}_s \left|\ell(\xi_s)-\ell(\zeta_s)\right|^2\Big|\,X_s\right]\right]
\end{aligned}
\end{align}
where, by Lemma \ref{lem:ExpMartPrelim}-\textit{(i)},
\begin{align*}
\Eq\left[Z(\zeta)^2_s\Eq\left[\left(Z(\xi)^{-1}_s -Z(\zeta)^{-1}_s\right)^2 \Big|\,X_s\right]\right] & =\Eq\left[\Eq\left[Z(\zeta)^2_s\,|\,X_s\right]\left(Z(\xi)^{-1}_s -Z(\zeta)^{-1}_s\right)^2 \left|\ell(\xi_s)\right|^2\right]\\
& =\Eq\left[\Eq\left[Z(\zeta)^2_s\,|\,X_s\right]\frac{\left(Z(\xi)_s -Z(\zeta)_s\right)^2}{Z(\xi)^{2}_s Z(\zeta)^{2}_s} \right]\\&
\leq C\Eq\left[\frac{\left|Z(\xi)_s -Z(\zeta)_s\right|^2}{Z(\xi)^{2}_s}
\right]
\end{align*}
and
\begin{align*}
  \Eq\left[Z(\xi)^2_s\Eq\left[Z(\xi)^{-2}_s \left|\ell(\xi_s)-\ell(\zeta_s)\right|^2\,|\,X_s\right]\right]
  &= \Eq\left[\Eq\left[Z(\xi)^2_s\,|\,X_s\right]Z(\xi)^{-2}_s \left|\ell(\xi_s)-\ell(\zeta_s)\right|^2\right] \\
  &\leq C\Vert\ell\Vert^2_{Lip}\Eq\left[\left|\xi_s-\zeta_s\right|^2\right].
\end{align*}
Putting the two last upper bounds together,  we obtain the following bound for the l.h.s of \eqref{eq:aux_pivot1},
\begin{align}\label{eq:aux_pivot1_result}
\begin{aligned}
&\Eq\left[Z(\zeta)^2_s\left|\Eq\left[Z(\xi)^{-1}_s \ell(\xi_s)-Z(\zeta)^{-1}_s \ell(\zeta_s)\,|\,X_s\right]\right|^2\right]\\
&\leq C\Vert\ell\Vert^2_{L^{\infty}}\Eq\left[\frac{\left|Z(\xi)_s -Z(\zeta)_s\right|^2}{Z(\xi)^{2}_s}
\right]
  + C\Vert\ell\Vert^2_{Lip}\Eq\left[\left|\xi_s-\zeta_s\right|^2\right].
\end{aligned}
\end{align}

For the second integral in \eqref{proofst:conditionalexists_2}, again by Young's inequality, we have
\begin{align*}
&\Eq\left[\text{Trace}\left(\left(Z(\zeta)_s\Eq\left[Z(\zeta)^{-1}_s \gamma(\zeta_s)\,|\,X_s\right]-Z(\xi)_s\Eq\left[Z(\xi)^{-1}_s \gamma(\xi_s)\,|\,X_s\right]\right)\right.\right.\\
&\quad\quad\quad\times\left.\left.\left(Z(\zeta)_s\Eq\left[Z(\zeta)^{-1}_s \gamma(\zeta_s)\,|\,X_s\right]-Z(\xi)_s\Eq\left[Z(\xi)^{-1}_s \gamma(\xi_s)\,|\,X_s\right]\right)^t\right)\right]\\
&=\sum_{i,j,k=1}^d\Eq\left[\left(\left(Z(\zeta)_s\Eq\left[Z(\zeta)^{-1}_s \gamma^{i,j}(\zeta_s)\,|\,X_s\right]-Z(\xi)_s\Eq\left[Z(\xi)^{-1}_s \gamma^{i,j}(\xi_s)\,|\,X_s\right]\right)\right.\right.\\
&\quad\quad\quad\times\left.\left.\left(Z(\zeta)_s\Eq\left[Z(\zeta)^{-1}_s \gamma^{i,k}(\zeta_s)\,|\,X_s\right]-Z(\xi)_s\Eq\left[Z(\xi)^{-1}_s \gamma^{i,k}(\xi_s)\,|\,X_s\right]\right)\right)\right]\\
&\leq d\sum_{i,j=1}^d\Eq\left[\left|
Z(\zeta)_s\Eq\left[Z(\zeta)^{-1}_s \gamma^{i,j}(\zeta_s)\,|\,X_s\right]-Z(\xi)_s\Eq\left[Z(\xi)^{-1}_s \gamma^{i,j}(\xi_s)\,|\,X_s\right]
\right|^2\right].
\end{align*}
Each component of the above sum  can be bounded  in the same manner than \eqref{eq:aux_pivot1_result}, replacing $\Vert\ell\Vert$ by some $\Vert\gamma^{i,j}\Vert$.
Putting all together, we get, for some positive constant $C$,
\begin{align*}
\Eq\left[|Y(\zeta)_t - Y(\xi)_t|^2\right] \leq C \int_0^t
\Eq\left[\frac{\left|Z(\xi)_s -Z(\zeta)_s\right|^2}{Z(\xi)^{2}_s}+\frac{\left|Z(\xi)_s -Z(\zeta)_s\right|^2}{Z(\zeta)^{2}_s}
\right] ds
  + C \int_0^t \Eq\left[\left|\xi_s-\zeta_s\right|^2\right] ds.
\end{align*}
\end{proof}
Combining the result of Proposition {\ref{prop:LagContraction}} with Lemma \ref{lem:ExpMartPrelim}-$(ii)$, and following the same procedure as for \eqref{proofst:conditionalexists_0}-\eqref{proofst:conditionalexists_1}, we deduce with an appropriate choice of the constant $c$ that
\[
\Vert Y(\zeta) - Y(\xi)\Vert_{c}<\Vert \zeta - \xi\Vert_{c},\,\,\forall\,\zeta,\xi\in M^2(0,T).
\]
This ensures that the mapping $\zeta\in M^2(0,T)\mapsto Y(\zeta)\in M^2(0,T)$ which assigns to each element $\xi\in M^2(0,T)$, the solution $(Y(\zeta)_t;\,0\leq t\leq T)$ given by \eqref{eq:Transformedlinear} is contracting in $(M^2(0,T),\Vert~\Vert_c)$. This enable us to conclude on the existence and uniqueness of a strong  solution to \eqref{eq:TransformedNonlinear}.

By Girsanov transformation, this also enable us to conclude on the wellposedness of a weak  solution to \eqref{eq:NonlinearLangevin}.

\subsection{Strong uniqueness}\label{sec:strong}
The strong wellposedness of \eqref{eq:NonlinearLangevin} will be given by a direct application of the following theorem due to Veretennikov \cite{Veretennikov-80}:
\begin{theorem}[Theorem 1, \cite{Veretennikov-80}] Let $b:[0,\infty)\times\er^d\rightarrow\er^d$ be a bounded measurable function. Let $\sigma:[0,\infty)\times\er^d\rightarrow\er^d\times\er^d$ be such that
$a:(t,x)\mapsto a(t,x)=\sigma\sigma^*(t,x)$ is continuous, $x\mapsto a(t,x)$ is uniformly continuous in each compact  $K\subset\er^d$, for  any $t\in(0,T],\,0<T<\infty$, and for some positive $\lambda$
\begin{align*}
(\xi\cdot a(t,x)\xi) \geq \lambda|\xi|^2,\quad \text{for all }\xi\in\er^d,\,(t,x)\in[0,\infty)\times\er^d.
\end{align*}
Moreover, assume that there exist some Borel functions $\sigma_d\in W^{1,2d+2}_{\loc}(\er^n)$,  $\sigma_{d+1}\in L^{2d+2}_{\loc}((0,\infty);W^{1,2d+2}_{\loc}(\er^n))$ and $\sigma_L:(0,t)\times\er^n\times\er^n\rightarrow\er^d\times\er^d$, $\er^n\times\er^n$-Lipschitz continuous uniformly for $t\in(0,T]$, $0<T<\infty$, such that
\begin{align*}
\sigma(t,x)=\sigma_L(t,\sigma_d(x),\sigma_{d+1}(t,x)).
\end{align*}
Then, given a $\er^d$-valued standard Brownian motion $(w_t;\,t\geq 0)$, the stochastic differential equation:
$$
x_t=x+\int_0^t b(s,x_s)\,ds+\int_0^t \sigma(s,x_s)dw_s,\quad x\in\er^d,\,t\geq 0,
$$
has a unique strong solution.
\end{theorem}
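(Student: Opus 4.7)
The plan is the three-step roadmap already signposted in the excerpt: a Girsanov transformation to absorb the $X$-drift, a Picard fixed point in the weighted Hilbert space $(M^2(0,T),\|\cdot\|_c)$ for weak wellposedness, and an application of Veretennikov's theorem (stated above) for pathwise uniqueness, with the existence of a density falling out of the analysis required to apply Veretennikov.

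For the Girsanov reduction, boundedness of $b$ (\hypcondi) and uniform ellipticity of $\sigma$ (\hypcondiii) make $\sigma^{-1}b$ bounded, so the process $Z_t$ defined in Section~\ref{sec:girsanov} is a genuine exponential martingale and $d\QQ=Z_T\,d\PP$ is a legitimate change of measure. Under $\QQ$, $\widehat B$ is a Brownian motion independent of $W$ and $X$ solves the exogenous SDE $dX_t=\sigma(X_t)\,d\widehat B_t$, while $\PP$-conditional expectations become $\QQ$-conditional expectations via the Bayes identity \eqref{eq:WeightedConditionalExpect}, producing the self-contained system \eqref{eq:TransformedNonlinear}. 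For weak wellposedness it then suffices to build a fixed point for the map $\Phi:\zeta\mapsto Y(\zeta)$ defined by the linearised system \eqref{eq:Transformedlinear}: $\Phi$ sends $M^2(0,T)$ into itself by boundedness of $\ell,\gamma$ together with \eqref{eq:WeightedConditionalExpect}, and combining Proposition \ref{prop:LagContraction} with Lemma \ref{lem:ExpMartPrelim}(ii) yields a bound of the form $\Eq[|Y(\zeta)_t-Y(\xi)_t|^2]\leq K\int_0^t\Eq[|\zeta_s-\xi_s|^2]\,ds$ (after Grönwall is used on the auxiliary $|Z(\zeta)-Z(\xi)|^2/Z^2$ term); multiplying by $e^{-ct}$ and integrating in time gives $\|Y(\zeta)-Y(\xi)\|_c^2\leq (K/c)\|\zeta-\xi\|_c^2$, so any $c>K$ makes $\Phi$ a strict contraction. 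The unique fixed point provides a strong solution to \eqref{eq:TransformedNonlinear}, and the inverse Girsanov transform $d\widehat\PP=Z_T^{-1}d\QQ$ returns a weak solution to \eqref{eq:NonlinearLangevin}; weak uniqueness follows since any weak solution, once pulled back to $\QQ$, induces an $M^2(0,T)$ fixed point of $\Phi$.

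For pathwise uniqueness under the additional Hypothesis \ref{hypo:strong}, the plan is to recast $(X_t,Y_t)$ as a \emph{standard} (non-McKean) SDE in $\er^{2d}$ with drift $(b(x,y),\Lambda[x;\rho_t])$ and block-diagonal diffusion $\mathrm{diag}(\sigma(x),\Gamma[x;\rho_t])$, and then apply Veretennikov's theorem to this coupled system. Boundedness and uniform ellipticity of the $2d\times 2d$ diffusion follow from \hypcondiii{} and \hypcondsix{} (with constant $a_*\wedge\alpha_*$), and the block $\sigma$ enjoys $W^{1,2d+2}_{\loc}$-regularity by \hypcondcinq{}. The main obstacle, and the step I expect to absorb most of the work, is to verify the $L^{2d+2}_{\loc}((0,T);W^{1,2d+2}_{\loc}(\er^d))$-regularity of the nonlocal block $(t,x)\mapsto\Gamma[x;\rho_t]$ together with a positive lower bound on $\rho_X(t,\cdot)=\int\rho_t(\cdot,y)\,dy$. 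The strategy is: (i) show that $\law(X_t,Y_t)$ admits a density $\rho_t$ at each $t\in(0,T]$ by exploiting the full $2d$-dimensional ellipticity via the associated Fokker--Planck / Kolmogorov equation; (ii) propagate the $L^p$ integrability of $\rho_X(0,\cdot)$ granted by \hypcondquatre{} (for $p\geq 2d+2$) to a bound on $\rho_X\in L^p((0,T)\times\er^d)$ through parabolic energy estimates on the marginal equation; (iii) combine the smoothness of $\gamma$ (\hypcondcinq{}) with an averaging-lemma argument — the $y$-ellipticity \hypcondsix{} regularises $\int\gamma(y)\rho_t(x,y)\,dy$ in $x$ — to obtain the required Sobolev bound on the numerator of $\Gamma[x;\rho_t]$; (iv) derive the strict positivity of $\rho_X(t,\cdot)$ on compact sets from the initial lower bound in \hypcondquatre{} via a parabolic Harnack / strong maximum principle argument applied to the linearised Fokker--Planck equation with bounded coefficients $\sigma,\Gamma$. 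Once these density and regularity estimates are in hand, Veretennikov's criterion applies with $\sigma_d\equiv 0$ and $\sigma_{d+1}$ equal to the full diffusion, giving simultaneously pathwise uniqueness and, as a byproduct of step (i), the density statement in the theorem.
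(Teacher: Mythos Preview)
The statement you were asked to prove is Veretennikov's theorem, which the paper does \emph{not} prove: it is quoted verbatim from \cite{Veretennikov-80} as an external black box and then applied. There is therefore no ``paper's own proof'' to compare against for this statement.

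Your proposal is not a proof of Veretennikov's theorem at all; it is a proof sketch of Theorem~\ref{thm:wellposedNonlinearLangevin}, the wellposedness result for the conditional McKean SDE~\eqref{eq:NonlinearLangevin}, which \emph{uses} Veretennikov's theorem as one ingredient. If that was your intent, then your outline is essentially the paper's approach (Girsanov reduction, fixed point in $(M^2(0,T),\Vert\cdot\Vert_c)$, then Veretennikov applied to the frozen $2d$-dimensional SDE), with two minor deviations worth flagging. First, for the lower bound on $\rho_X$ the paper does not use a Harnack or maximum-principle argument: it invokes the two-sided Aronson bounds~\eqref{eq:UpLowBounds} directly, which immediately give both the $L^p$ upper control and the local lower bound~\eqref{eq:lowerboundconditional} from the initial lower bound in \hypcondquatre. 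Second, for the Sobolev regularity of $(t,x)\mapsto\Ep[\gamma(Y_t)\,|\,X_t=x]$ the paper's ``averaging lemma'' (Lemma~\ref{lem:averaging}) does not rely on $y$-ellipticity to smooth the moment $\int m(y)\mu(t,dx,dy)$; instead it writes down the linear parabolic PDE in $x$ satisfied by $\overline{m\mu}$ (equation~\eqref{proofst:averaging_1}) and reads off $L^p(W^{2,p}_{\loc})$ regularity from the Bogachev--Krylov--R\"ockner--Shaposhnikov estimates (Proposition~\ref{prop:BoKrRoStEstimates}). Your step~(iii) as written (``$y$-ellipticity regularises $\int\gamma(y)\rho_t(x,y)\,dy$ in $x$'') is not the mechanism the paper uses and would need a separate justification.

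If you actually meant to prove Veretennikov's theorem itself, nothing in your proposal addresses it.
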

We are going to show that the nonlinear diffusion coefficient
\[
(t,x)\mapsto \Ep[\gamma(Y_t)\,|\,X_t=x]
\]
is continuous and admits a derivative (in the Sobolev sense) w.r.t. $x$ such that $\nabla_x \Ep[\gamma(Y_t)\,|\,X_t=x]$ is locally in $L^{2d+2}$-integrable on $(0,T)\times\er^d$.
Before that, as a preliminary remark, let us point out that  owing to \hypcondo and \hypcondi, for all $t\geq 0$, the law of $X_t$ admits a density function $\rho_X(t,x)$. In addition, since $\rho_X(t,x)$ is a weak solution to
$$
\partial_t\rho_X+\nabla_x\left(\rho_X B_{\rho_X}\right)-\frac{1}{2}\trace\left(\nabla^2_x\times\left(\rho_X\sigma\sigma^*\right)\right)=0,
$$
where $B_{\rho_X}=B_{\rho_X}(t,x)$ is the bounded Borel measurable $\er^d$-vector field given by
$$
B_{\rho_X}(t,x)=\Ep\left[b(X_t,Y_t)\,|\,X_t=x\right],
$$
we have the following bounds (see e.g. Aronson \cite{Aronson-67})
\begin{equation}\label{eq:UpLowBounds}
c\int G^{1/\kappa}_{t}(x-x_0)\rho_X(0,x_0)\,dx_0\leq \rho_X(t,x)\leq C\int G^\kappa_{t}(x-x_0)\rho_X(0,x_0)\,dx_0,\quad x\in\er^d,\,0\leq t\leq T,
\end{equation}
where $G^{\kappa}_{t}$ is the centered Gaussian kernel with variance $\kappa^2 t$ and $\kappa,c,C$ are some finite positive constants depending only on $T,d$, $a_*$ and $a*$. Then, under the assumption \hypcondquatre, for all $0<R<\infty$, we have
\begin{align*}
  \inf_{x\in B(0,R)}\rho_X(t,x) &\geq  c\inf_{x\in B(0,R)}\int G^{1/\kappa}_{t}(x-x_0)\rho_X(0,x_0)\,dx_0 \\
  &\geq c\inf_{x\in B(0,R)}\int G^{1/\kappa}_{1}(x_0)\rho_X(0,x+tx_0)\,dx_0 \\
  &\geq c\inf_{x\in B(0,R)}\int_{B(0,R)} G^{1/\kappa}_{1}(x_0)\rho_X(0,x+tx_0)\,dx_0 \\
  &\geq c\inf_{x\in B(0,R),x_0\in B(0,R),0\leq t\leq T}\rho_X(0,x+tx_0)\left(\int_{B(0,R)} G^{1/\kappa}_{1}(x_0)\,dx_0\right),
\end{align*}
that leads to the following lower bound for $\rho_X$:
\begin{equation}\label{eq:lowerboundconditional}
\inf_{x\in B(0,R)}\rho_X(t,x) \geq m_R>0,\quad\text{with }\quad m_R:=c\inf_{z\in B(0,R+TR)}\rho_X(0,z)\left(\int_{B(0,R)} G^{1/\kappa}_{1}(x_0)\,dx_0\right).
\end{equation}
The positiveness of $\rho_X$ ensures that the component $\Ep\left[\gamma(Y_t)\,|\,X_t=x\right]$ is defined a.e. on $(0,T)\times\er^d$ and writes as a Borel measurable function:
$$
(t,x)\mapsto\Ep\left[\gamma(Y_t)\,|\,X_t=x\right]=\int \gamma(y)\mu^{Y\,|\,X=x}(t,dy,x),\,\text{for a.a. }(t,x)\in(0,T)\times\er^d,
$$
where $\int \gamma(y)\mu^{Y\,|\,X=x}(t,dy,x)$ is the disintegration of $\int \gamma(y)\mu(t,dx,dy)$ for $\mu(t)=\law(X_t,Y_t)$ w.r.t. $\rho_X(t,x)\,dx$. To exhibit the smoothness of $(t,x)\mapsto\Ep\left[\gamma(Y_t)\,|\,X_t=x\right]$, we prove below a general result showing that any distribution of the form
 $$
 (t,x)\mapsto \int m(y)\mu(t,dx,dy),\quad\text{for any bounded Borel function }m:\er^d\rightarrow \er,
 $$
 is absolutely continuous w.r.t. the Lebesgue measure on $\er^d$ and its related density is smooth in a suitable Sobolev sense. Such property is precisely given by the following lemma:

\begin{lemma}\label{lem:averaging} Assume that Assumptions \ref{hypo:weak} and \ref{hypo:strong} hold. For $(X_t,Y_t;\,0\leq t\leq T)$ the weak solution to \eqref{eq:NonlinearLangevin}, let $\mu(t)$ denote the joint law of $(X_t,Y_t)$ and let $\rho_X(t)$ be the density function of $\law(X_t)$ for $0\leq t\leq T$. Then, for any Borel measurable function $m:\er^d\rightarrow [0,\infty)$ not-identically equal to $0$, bounded, of class $\Cc^{2}(\er^d)$ on $\er^d$ such that its derivatives up to second order are bounded, the family of distributions
 $$
 \int m(y)\mu(t,dx,dy)
 $$
 admits a representant denoted $\overline{m\mu}$ in $L^p((0,T);W^{2,p}_{\loc}(\er^d))$ for any $p\geq d+2$.
\end{lemma}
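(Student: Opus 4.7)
The plan is to derive a linear parabolic PDE in the $x$-variable alone for the quantity $u(t,x):=\int m(y)\,\mu(t,x,dy)$, and then invoke $L^p$-parabolic regularity. Since $(X_t,Y_t;\,0\leq t\leq T)$ is a weak solution to \eqref{eq:NonlinearLangevin}, the nonlinear coefficients $L(t,x):=\Ep[\ell(Y_t)\mid X_t=x]$ and $G(t,x):=\Ep[\gamma(Y_t)\mid X_t=x]$ are bounded Borel functions of $(t,x)$ by \hypcondii, and the joint law $\mu(t,dx,dy)$ satisfies, in the distributional sense, the Fokker--Planck equation with drift $(b,L)$ and block-diagonal diffusion $\mathrm{diag}(a,A)$, where $a=\sigma\sigma^*$ and $A=GG^*$.

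Testing this Fokker--Planck equation against functions of the product form $\phi(x)\,m(y)$ with $\phi\in\Cc^\infty_c(\er^d)$, and integrating by parts in the $y$-variable (legitimate thanks to the bounds on $m,\nabla m,\nabla^2 m$ and the fact that $\mu$ is a probability), yields the weak parabolic equation
\[
\partial_t u \;-\; \tfrac{1}{2}\nabla_x^2:(a\,u) \;+\; \mathrm{div}_x \bar B \;=\; L\cdot \bar N \;+\; \tfrac{1}{2}A:\bar M,
\]
where $\bar B^i(t,x):=\int b^i(x,y)\,m(y)\,\mu(t,x,dy)$, $\bar N^i(t,x):=\int(\partial_{y_i}m)(y)\,\mu(t,x,dy)$, and $\bar M_{ij}(t,x):=\int(\partial^2_{y_iy_j}m)(y)\,\mu(t,x,dy)$. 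The pointwise domination $|\bar B|+|\bar N|+|\bar M|\leq C_m\,\rho_X(t,x)$ (with $C_m$ depending on $\|b\|_\infty$ and on the $\Cc^2$-bounds on $m$), combined with the Aronson upper bound \eqref{eq:UpLowBounds} and Young's convolution inequality, gives
\[
\sup_{0\leq t\leq T}\|\rho_X(t,\cdot)\|_{L^p(\er^d)}\;\leq\; C\,\|\rho_X(0,\cdot)\|_{L^p(\er^d)}\;<\;\infty,\quad 1\leq p\leq 2d+2,
\]
using \hypcondquatre. Hence the right-hand side of the above PDE lies in $L^p((0,T)\times\er^d)$ and the divergence-form datum $\bar B$ lies in $L^p((0,T)\times\er^d)^d$ for each such $p$.

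With the PDE in hand, one appeals to $L^p$-parabolic regularity. By \hypcondiii and \hypcondcinq, the principal matrix $a$ is uniformly elliptic with $\Cc^2$-bounded derivatives; expanding $\nabla_x^2:(a\,u)$ produces a non-divergence form parabolic operator with continuous principal symbol and bounded lower-order coefficients. The Calder\'on--Zygmund / Lady\v{z}enskaja--Solonnikov--Ural'ceva $L^p$-theory (see e.g. \cite{LadSolUra-68}) then furnishes interior estimates of the type
\[
\|u\|_{L^p(0,T;W^{2,p}(B_R))}\;\leq\; C_R\Big(\|\text{source}\|_{L^p}+\|\bar B\|_{L^p(W^{1,p}(B_{R+1}))}+\|u(0,\cdot)\|_{W^{2-2/p,p}}\Big),
\]
for every compact $B_R\subset\er^d$, which combined with the source estimates above yields $u\in L^p(0,T;W^{2,p}_{\loc}(\er^d))$ for $p\in[d+2,2d+2]$.

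The core obstacle is the divergence-form source $\mathrm{div}_x\bar B$, which \emph{a priori} belongs only to $L^p(W^{-1,p})$ and on its own would deliver merely $W^{1,p}_{\loc}$ regularity for $u$. To bridge from $W^{1,p}$ to $W^{2,p}$, the plan is to absorb $\mathrm{div}_x\bar B$ into the drift by writing $\bar B(t,x)=\tilde B(t,x)\,u(t,x)$, where $\tilde B$ is the $(m\mu)$-weighted average of $b(x,\cdot)$, automatically bounded by $\|b\|_\infty$ and well-defined on $\{u>0\}$. Invoking the Aronson lower bound \eqref{eq:lowerboundconditional} on $\rho_X$ over compact sets (granted by \hypcondquatre), together with a localization argument, guarantees positivity of $u$ on any fixed compactum and reduces the PDE to a uniformly parabolic equation with bounded drift and purely $L^p$ right-hand side, to which the classical $W^{2,p}_{\loc}$-regularity estimate applies and delivers the announced conclusion.
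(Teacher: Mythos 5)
Your overall strategy mirrors the paper's proof: test the joint Fokker--Planck identity for $\law(X_t,Y_t)$ against product test functions $\phi(x)m(y)$ via It\^o's formula to obtain a weak parabolic equation for $u(t,x)=\int m(y)\,\mu(t,dx,dy)$, dominate the averaged source terms by $\rho_X$, and combine the Aronson upper bound of~\eqref{eq:UpLowBounds} with~\hypcondquatre to place everything in $L^p$. The absorption you propose at the very end --- writing $\bar B=\tilde B\,u$ with $\tilde B$ bounded by $\Vert b\Vert_\infty$, and similarly for the $\nabla_y m$ and $\nabla^2_y m$ averages --- is exactly what the paper does from the outset in~\eqref{proofst:averaging_1}: the coefficients $B$, $E_1$, $E_2$ there are defined as the corresponding weighted ratios, so the whole identity is recast as a Fokker--Planck equation for the single measure $\overline{m\mu}\,dx$ with locally bounded drift and zero-order term. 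The only genuine divergence is the regularity machinery invoked: the paper uses Proposition~\ref{prop:BoKrRoStEstimates}, taken from Bogachev, Krylov, R\"ockner and Shaposhnikov~\cite{BoKrRoSh-15} and tailored to Fokker--Planck measures with merely $L^p_{\loc}$ drift, whereas you appeal to classical Lady\v{z}enskaja--Solonnikov--Ural'ceva / Calder\'on--Zygmund parabolic $L^p$-theory.

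The last step of your proposal contains a real gap. After the absorption the drift still sits in divergence form, $\mathrm{div}_x(\tilde B\,u)$, with $\tilde B$ only \emph{bounded measurable} --- it is neither Lipschitz nor Sobolev. You cannot expand $\mathrm{div}_x(\tilde B\,u)=\tilde B\cdot\nabla_x u+u\,\mathrm{div}_x\tilde B$ and then use non-divergence $W^{2,p}$ estimates with bounded drift, since $\mathrm{div}_x\tilde B$ is undefined; the Calder\'on--Zygmund route to $W^{2,p}_{\loc}$ for a divergence-form drift requires $\tilde B$ to have at least $W^{1,p}_{\loc}$ regularity, which is unavailable here. The Fokker--Planck regularity theorem the paper invokes (Proposition~\ref{prop:BoKrRoStEstimates}) does handle this rough-drift situation, but, as stated there, it delivers a density in $L^p(J;W^{1,p}(V))$ --- one spatial derivative less than you claim. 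You should therefore temper the conclusion of that final step to $W^{1,p}_{\loc}$, which is in fact all that the downstream application in Section~\ref{sec:strong} (Veretennikov's theorem, requiring only $W^{1,2d+2}_{\loc}$ of the diffusion coefficient) actually needs.
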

Splitting $y\mapsto\gamma(y)$ into its positive part $(\gamma)^+$ and negative part $(\gamma)^-$, Lemma \ref{lem:averaging} ensures that, $\Ep\left[\gamma(Y_t)\,|\,X_t=x\right]$ rewrites according to
$$
\Ep\left[\gamma(Y_t)\,|\,X_t=x\right]=\frac{\overline{\gamma\mu}(t,x)}{\rho_X(t,x)}
\left(=\frac{\overline{(\gamma)^+\mu}(t,x)}{\rho_X(t,x)}+\frac{\overline{(\gamma)^-\mu}(t,x)}{\rho_X(t,x)}\right).
$$
Since Lemma \ref{lem:averaging}  also guarantees that $\rho_X$ and $\overline{\gamma\mu}$ are both in $L^p((0,T);W^{2,p}_{\loc}(\er^d))$ for any $p$ large enough, the lower bound \eqref{eq:lowerboundconditional} further ensures that
$(t,x)\mapsto\frac{\overline{\gamma\mu}(t,x)}{\rho_X(t,x)}$ is in $L^p((0,T);W^{2,p}_{\loc}(\er^d))$. We then conclude on the strong uniqueness of $(X_t,Y_t;\,0\leq t\leq T)$
solution to \eqref{eq:NonlinearLangevin}.

\begin{proof}[Proof of Lemma \ref{lem:averaging}] As a preliminary step, let us point out  that, since $\rho_X(0)$ is in $L^1(\er^d)\cap L^p(\er^d)$ for $p\geq 2d+2$, then $\rho_X(0)\in L^r(\er^d)$ for all $1\leq r\leq p$. Combined with the Gaussian upper-bound in \eqref{eq:UpLowBounds}, this estimate ensures that, whenever $g:\er^d\mapsto \er$ is bounded, for all $0\leq t\leq T$,
 $$
 \left|\int_{\er^d} \psi(x)\int_{\er^d} g(y)\mu(t,dx,dy)\right|\leq C\Vert g\Vert_{L^\infty(\er^d)}\Vert \psi\Vert_{L^q(\er^d)},\,\,1\leq \frac{q}{q-1}\leq 2d+2,\,\psi\in\Cc^\infty_c(\er^d).
 $$
 Riesz's representation theorem then implies that
 \begin{equation}\label{proofst:averaging_0}
 \int_{\er^d}  g(y)\mu(t,dx,dy)=\overline{g\mu}(t,x)\,dx,
 \end{equation}
 for some $\overline{g\mu}$ in $L^\infty((0,T);L^r(\er^d))$.

 By It\^o formula, for all $\phi\in\Cc^\infty_c((0,T)\times\er^d)$, we have
 \begin{align*}
 &\Ep\left[\int_0^T m(Y_t)\left(\partial_t\phi(t,X_t)+b(X_t,Y_t)\cdot \nabla_x \phi(t,X_t)+\frac{1}{2}\trace\left(
 \sigma\sigma^*(X_t)\nabla_x^2\phi(t,X_t)\right)\right)
 \,dt\right]\\
 &+\Ep\left[\int_0^T \phi(t,X_t)\left(\Ep\left[\ell(Y_t)\,|\,X_t\right]\cdot \nabla_y m(Y_t)+\frac{1}{2}\trace\left(
 \nabla_y^2m(Y_t)\left( \Ep\left[\gamma(Y_t)\,|\,X_t\right]
 \Ep\left[\gamma^*(Y_t)\,|\,X_t\right]\right)\right)\right)\,dt\right]=0.
 \end{align*}
 Rewriting the preceding expression into
 \begin{align*}
 &\int_{(0,T)\times\er^{2d}}m(y)\left(\partial_t\phi(t,x)+b(x,y)\cdot \nabla_x \phi(t,x)+\frac{1}{2}\trace\left( \sigma\sigma^*(x)\nabla_x^2\phi(t,x)\right)\right)\mu(t,dx,dy)\,dt\\
 &+\int_{(0,T)\times\er^{2d}}\phi(t,x)\left(\Ep\left[\ell(Y_t)\,|\,X_t=x\right]
 \cdot \nabla_y m(y)\right)\mu(t,dx,dy)\,dt\\
 &+\frac{1}{2}\int_{(0,T)\times\er^{2d}}\phi(t,x)\trace
 \left(\nabla_y^2m(y)\Ep\left[\gamma(Y_t)\,|\,X_t=x\right]\Ep\left[\gamma^*(Y_t)\,|\,X_t=x\right]\right)
 \mu(t,dx,dy)\,dt=0,
 \end{align*}
 we deduce that
 $$
 \overline{m\mu}(t,dx):=\int m(y)\mu(t,dx,dy),
 $$
 satisfies
 \begin{align*}
 &\int_{(0,T)\times\er^{d}}\left(\partial_t\phi(t,x)+\frac{1}{2}\trace\left( \sigma\sigma^*(x)\nabla_x^2\phi(t,x)\right)\right)\overline{m\mu}(t,dx)\,dt\\
 &+\int_{(0,T)\times\er^{d}}\left(\nabla_x\phi(t,x)\cdot\int m(y)b(x,y)\mu(t,dx,dy)\right)\,dt\\
 &+\int_{(0,T)\times\er^{d}}\left(\phi(t,x)\Ep\left[\ell(Y_t)\,|\,X_t=x\right]\cdot \int\nabla_ym(y)\mu(t,dx,dy) \right)\,dt\\
 &+\frac{1}{2}\int_{(0,T)\times\er^{2d}}\phi(t,x)\trace
 \left(\Ep\left[\gamma(Y_t)\,|\,X_t=x\right]\Ep\left[\gamma^*(Y_t)\,|\,X_t=x\right]\nabla^2_ym(y)\right)
 \mu(t,dx,dy)\,dt=0.
\end{align*}
Since $b$, $\ell$, $\gamma$, $m,\nabla_ym$ and $\nabla^2_ym$ are all bounded, we have
\begin{equation}\label{proofst:averaging_1}
\begin{aligned}
 &\int_{(0,T)\times\er^{d}}\left(\partial_t\phi(t,x)+\frac{1}{2}\mbox{Trace}\left( \sigma\sigma^*(x)\nabla_x^2\phi(t,x)\right)\right)\overline{m\mu}(t,dx)\,dt\\
 &+\int_{(0,T)\times\er^{d}}\left(E_1[t,x]+E_2[t,x]\right)\phi(t,x)\overline{m\mu}(t,dx)\,dt\\
 &=-\int_{(0,T)\times\er^{d}}\left(B[t,x]\cdot \nabla_x\phi(t,x)\right)\overline{m\mu}(t,dx)\,dt,
\end{aligned}
\end{equation}
where, following the convention in \eqref{proofst:averaging_0}, $B[t,x]=(B^{(i)}[t,x];1\leq i\leq d)$,  $E_1[t,x]$ and  $E_1[t,x]$ are given  by
\begin{align*}
B^{(i)}[t,x] &= \frac{\overline{mb^{(i)}\mu}}{\overline{m\mu}} (t,x)\ind_{\{\overline{m\mu}(t,x)\neq 0\}},\\
 E_1[t,x,\rho(t)]& =\sum_{i=1}^d\Ep\left[\ell^i(Y_t)\,|\,X_t=x\right]\frac{\overline{(\partial_{y_i}m)\mu}}{\overline{m\mu}}(t,x)\ind_{\{\overline{m\mu}(t,x)\neq 0\}},\\
 E_2[t,x] & =
 \sum_{i,j=1}^d\Big(\Ep\left[\gamma(Y_t)\,|\,X_t=x\right]\Ep\left[\gamma^*(Y_t)\,|\,X_t=x\right]\Big)^{i,j}\;
 \frac{\displaystyle\overline{(\partial^2_{y_iy_j}m)\mu}}{\displaystyle \overline{m\mu}}(t,x)\ind_{\{\overline{m\mu}(t,x)\neq 0\}}.
\end{align*}
Since $m$ is not identically equal to $0$ on $\er^d$, \eqref{eq:UpLowBounds} implies that $\overline{m\mu}(t,x)\neq 0$ a.e. on $(0,T)\times\er^d$ and that the fraction in each of the above functions are defined a.e. on $(0,T)\times\er^d$.) Noticing that $B$, $E_1$, $E_2$ are all locally bounded, the continuity of $\overline{m\mu}$ on $(0,T)\times\er^d$ and its local Sobolev regularity then follow from the application of the following results from Bogachev {\it et al.} \cite{BoKrRoSh-15}.

 \begin{proposition}[Corollaries 6.4.3 and 6.4.4 in  \cite{BoKrRoSh-15}]\label{prop:BoKrRoStEstimates}
   Let $\Dd\subset \er^d$ be an open set. Assume that $a:(0,T)\times\er^d\rightarrow\er^{d\times d}$ is uniformly elliptic such that $a$ and $a^{-1}$ are locally bounded on $(0,T)\times\er^d$ and that, for $p>d+2$,
\[\sup_{1\leq i,j\leq d}\sup_t\Vert a^{i,j}\Vert_{W^{1,p}(B(x_0,R))}<\infty\quad\text{for all  }x_0\in\Dd, ~0<R<\infty.\]
Assume that $b^1,b^2,\cdots,b^d$, $f^1,f^2,\cdots,f^d$ are in $L^p_{\loc}((0,T)\times\Dd)$, $c$ is in $L^{p/2}_{\loc}((0,T)\times\Dd)$, and assume that $\mu$ is a locally finite Borel measure on $(0,T)\times\er^d$ such that
   $$
   \int_{(0,T)\times\Dd}\left(\partial_t \phi+\text{Trace}(a\nabla^2\phi)+b\cdot\nabla\phi+c\phi\right)\,\mu(dt,dx)=\int_{(0,T)\times\Dd}f\cdot\nabla \phi,\,\,\forall\,\psi\in\Cc^\infty_c((0,T)\times\Dd).
   $$
   Then $\mu$ has a locally H\"older continuous density that belongs to the space $L^p(J;W^{1,p}(V))$ for all $J\subset (0,T),V\subset \Dd$ such that $J\times V$ has compact closure in $(0,T)\times\Dd$.
 \end{proposition}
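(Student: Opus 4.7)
The plan is to interpret $\mu$ as a distributional solution of a second-order adjoint parabolic equation and progressively upgrade its regularity using parabolic Calder\'on--Zygmund theory with Sobolev leading coefficients. This is the standard Bogachev--Krylov--R\"ockner--Shaposhnikov strategy for Fokker--Planck-type equations with non-smooth coefficients, and the condition $p>d+2$ (strictly above the parabolic dimension) is what eventually forces both H\"older continuity and the $W^{1,p}$ estimate.

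First I would rewrite the weak formulation in divergence form. Because $a^{ij}\in W^{1,p}_{\loc}((0,T)\times \Dd)$, one has
\begin{align*}
\trace(a\nabla_x^2 \phi) = \sum_{i,j}\partial_{x_i}(a^{ij}\partial_{x_j}\phi) - \sum_{i,j}(\partial_{x_i}a^{ij})\partial_{x_j}\phi,
\end{align*}
so the identity satisfied by $\mu$ becomes the weak form of a divergence-type parabolic equation whose first-order coefficient $\tilde b := b - (\sum_j \partial_{x_j} a^{ij})_i$ still lies in $L^p_{\loc}$, whose zeroth-order coefficient is in $L^{p/2}_{\loc}$, and whose source $\nabla_x\cdot f$ is in $W^{-1,p}_{\loc}$.

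Next I would establish absolute continuity $\mu=\rho\,dt\,dx$ with $\rho\in L^p_{\loc}$ by a duality argument. For any nonnegative $g\in \Cc^\infty_c(J\times V)$ with $J\times V$ having compact closure in $(0,T)\times \Dd$, I would solve backwards the Cauchy problem
\begin{align*}
-\partial_t u - \trace(a\nabla_x^2 u) - b\cdot\nabla_x u - cu = g, \quad u(T,\cdot)=0,
\end{align*}
on a slightly larger cylinder. Using the local ellipticity of $a$ and the Sobolev regularity of its entries, Krylov's parabolic Calder\'on--Zygmund theory yields $\partial_t u,\nabla_x^2 u \in L^{p/(p-1)}_{\loc}$, and parabolic Sobolev embedding gives $u, \nabla_x u\in L^\infty$ under $p>d+2$. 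Testing the weak equation for $\mu$ against a space-time mollification and localization of $u$ produces $|\int g\,d\mu|\leq C\|g\|_{L^{p/(p-1)}}$, whence Riesz representation delivers $\rho\in L^p_{\loc}$.

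Once $\rho\in L^p_{\loc}$ is available, the terms $\tilde b\rho$ and $c\rho$ appearing on the right-hand side of the divergence-form equation for $\rho$ lie in suitable Lebesgue spaces by H\"older's inequality, and classical parabolic $L^p$ regularity for divergence-form equations with $W^{1,p}$ leading coefficients (cf.~Ladyzhenskaya--Solonnikov--Ural'tseva) then bootstraps $\rho$ to $L^p(J;W^{1,p}(V))$. The local H\"older continuity follows from the parabolic Sobolev--Morrey embedding, which is available precisely when $p$ exceeds the parabolic dimension $d+2$. The main technical obstacle is the duality step: one must localize in such a way that the non-smooth lower-order coefficients $b$ and $c$ do not disrupt the Calder\'on--Zygmund estimate for the adjoint Cauchy problem, and one must exploit the critical threshold $p>d+2$ to absorb both the drift $b\in L^p$ and the subcritical potential $c\in L^{p/2}$ through appropriate Sobolev embeddings.
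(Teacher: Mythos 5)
The paper does not prove this proposition; it is stated and used as a black box, attributed to Corollaries 6.4.3 and 6.4.4 of the Bogachev--Krylov--R\"ockner--Shaposhnikov monograph, so there is no in-paper argument against which to compare your sketch. That said, let me comment on whether your sketch is a viable reconstruction of the cited result.

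Your overall strategy --- rewriting $\trace(a\nabla_x^2\phi)$ in divergence form thanks to $a^{ij}\in W^{1,p}_{\loc}$, identifying the resulting equation as a divergence-form parabolic equation with $L^p_{\loc}$ drift, $L^{p/2}_{\loc}$ potential, and $W^{-1,p}_{\loc}$ source, and then bootstrapping regularity of the density --- is in the right spirit, and the final bootstrap (once $\rho\in L^p_{\loc}$) to $L^p(J;W^{1,p}(V))$ and then local H\"older continuity via parabolic Morrey embedding is correct and uses $p>d+2$ exactly where it is needed.

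The genuine gap is in the duality step. You solve the backward adjoint Cauchy problem with datum $g$ and claim that parabolic Calder\'on--Zygmund theory gives $u\in W^{2,1}_{q,\loc}$ with $q=p/(p-1)$, and that ``parabolic Sobolev embedding gives $u,\nabla_x u\in L^\infty$ under $p>d+2$''. That embedding fails: the condition $p>d+2$ is a condition on the integrability of the coefficients, not of $u$, and the conjugate exponent $q=p/(p-1)$ is \emph{small} (strictly less than $(d+2)/(d+1)$, hence well below $d+2$ and in fact below $(d+2)/2$ once $d\geq 2$). The embedding $W^{2,1}_q\hookrightarrow L^\infty$, let alone into $C^{0,1}$ for $\nabla_x u$, requires $q$ to exceed the parabolic dimension, which does not hold here. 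This matters because $\mu$ is a priori only a locally finite measure: to pair $u$ (or a mollification of it) against $\mu$ and pass to the limit you need $u$ and $\nabla_x u$ at least continuous or bounded, and your argument does not deliver that. The actual route in the monograph does not rely on such an embedding; it first establishes local boundedness (or membership in some $L^r_{\loc}$ with $r>1$) of the density by a maximum-principle/Harnack-type argument specific to Fokker--Planck--Kolmogorov equations, and only then carries out the $L^p$-to-$W^{1,p}$ bootstrap. As written, your duality step would need a different mechanism to close, so I would not regard the sketch as complete even at the outline level.
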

 \end{proof}

\medskip

\noindent
 {\textbf{Acknowledgement}: The second author has been supported by the Russian Academic Excellence Project "$5$-$100$".}

\appendix
\section{Appendix}

The proof of the estimate \eqref{eq:StroockVaradhan-estim} relies on the original arguments exhibited  in \cite{StrVar-79}  (pages 304--306) in a one-dimensional setting.
 We simply extend the result  to a multidimensional case: For $\delta>0$, define
$$
\beta_\delta(t)=\ind_{\{\delta \leq t\leq 1/\delta\}},\,\, G^{\gamma,\delta}_{t}(x,y)=\beta_\delta(t)G^{\gamma}_{t}(x,y).
$$
Let $f$ be a $\Cc^\infty_c((0,T)\times\er^d)$-function so that $(t,x)\mapsto \int_0^t \triangle_xG^{\gamma}_{t-s}(f)(s,x)\,ds=\int_0^t \triangle_xG^{\gamma}_{t-s}{*} f(s,x)\,ds$ (for simplicity $G^{\gamma}_{t}$ {denotes} the $\Nn(0,\gamma^2 t)$-Gaussian density function{/kernel}) is $\Cc^\infty((0,T)\times\er^d)\cap L^2((0,T)\times\er^d)$. By Parseval's equality: $\Vert h\Vert_{L^2(\er^m)}=\frac{1}{(2\pi)^m}\Vert \Ff(h)\Vert_{L^2(\er^m)},\,h\in L^2(\er^m),\,m\geq 1$, we get
$$
\big\Vert \int_0^{.} \triangle_xG^{\gamma,\delta}_{.-s}{*} f(s,x)\,ds\big\Vert_{L^2((0,T)\times\er^d)}=\frac{1}{(2\pi)^{d+1}}\big\Vert
\Ff(\triangle_xG^{\gamma,\delta})\Ff(f\ind_{\{[0,T]\}})\big\Vert_{L^2(\er^{d+1})}
$$
where $\Ff$ denote the Fourier transformation along the variables $t$ and $x$:
$$
\Ff(f\ind_{\{[0,T]\}})(\tau,\xi)=\int_\er\int_{\er^d} \exp\{-\mathbf{i}t\tau-\mathbf{i}x\cdot \xi \}f(t,x)\ind_{[0,T]}\,dt\,dx.
$$
Since
$$
\Ff(\triangle_xG^{\gamma,\delta})(\tau,\xi)=-|\xi|^2\Ff(G^{\gamma,\delta})(\tau,\xi)
$$
with
\begin{align*}
\Ff(G^{\gamma,\delta})(\tau,\xi)&=\int_0^T\beta_{\delta}(t) \exp\{-\mathbf{i}t\tau\}\left(\int_{\er^d}\exp\{-\mathbf{i}\xi\cdot x\} G^{\gamma}(t,x)\,dx\right)\,dt\\
&=\int_0^T\beta_{\delta}(t) \exp\{-\mathbf{i}t\tau\}\exp\{-t\gamma^2|\xi|^2/2\}\,dt\\
&=\frac{2}{2\mathbf{i}\tau-\gamma^2|\xi|^2}\left(\exp\{-\delta(\mathbf{i}\tau-\gamma^2|\xi|^2/2)\}-
\exp\{-\frac{(\mathbf{i}\tau-\gamma^2|\xi|^2/2)}{\delta}\}\right),
\end{align*}
we get
$$
|\xi|^2\left|\Ff(\triangle_xG^{\gamma,\delta})(\tau,\xi)\Ff(f\ind_{[0,T]})(\tau,\xi)\right|\leq \left|\frac{-2|\xi|^2}{2\mathbf{i}\tau-\gamma^2|\xi|^2}\right||\Ff(f\ind_{[0,T]})|(\tau,\xi)\leq \frac{2}{\gamma^2}|\Ff(f\ind_{[0,T]})|(\tau,\xi).
$$
Integrating both sides of the preceding inequality over $\er^{d+1}$, it follows that
\begin{align*}
  \frac{1}{(2\pi)^{d+1}}\big\Vert \int_0^{.} \Ff(\triangle_xG^{\gamma,\delta})\Ff(f\ind_{[0,T]})\big\Vert_{L^2((0,T)\times\er^d)}&\leq \frac{2}{\gamma^2(2\pi)^{d+1}}\big\Vert \Ff(f\ind_{[0,T]})\big\Vert_{L^{2}((0,T)\times\er^d)}\\
  &\leq \frac{2}{\gamma^2}\big\Vert f\big\Vert_{L^{2}((0,T)\times\er^d)}.
\end{align*}
 Since $\int_0^{.}\triangle_x G^\gamma_{.-s}(f(s))\,ds=\lim_{\delta\rightarrow 0^+}  \int_0^{.}\triangle_x G^{\gamma,\delta}_{.-s}(f(s))\,ds $, we deduce, by extension, that
\[
\big\Vert\int_0^{.}\triangle_x G^\gamma_{.-s}(f(s))\,ds \big\Vert_{L^2((0,T)\times\er^d)}\leq \frac{2}{\gamma^2}\big\Vert f\big\Vert_{L^2((0,T)\times\er^d)},\,\forall f\in \Cc^\infty_c((0,T)\times\er^d).
\]
Since $\Cc^\infty_c((0,T)\times\er^d)$ is dense in $L^2((0,T)\times\er^d)$, we conclude \eqref{eq:StroockVaradhan-estim}.

In the same way, for any given positive definite $\er^{d\times d}$-matrix $\Gamma$ satisfying $\xi\cdot \Gamma \xi\geq \gamma^2|\xi|^2$ for some $\gamma\neq 0$, let $G^\Gamma_t$ be the $\Nn(0,\Gamma t)$-Gaussian density function and define, for $\beta_\delta$ as above,
\[
G^{\Gamma,\delta}_{t}=\beta_\delta(t)G^{\Gamma}_{t}.
\]
Observing that
\begin{align*}
\left|\Ff(\partial^2_{x_ix_j} G^{\Gamma,\delta})(\tau,\xi)\right|&=\left|\xi_i\xi_j\right|
\left|\Ff(G^{\Gamma,\delta})(\tau,\xi)\right|\\
&=\left|\xi_i\xi_j\right|\left|\int_0^T\beta_{\delta}(t) \exp\{-\mathbf{i}t\tau\}\exp\{-t(\xi \cdot \Gamma\xi )/2 \}\,dt\right|\leq \frac{2\left|\xi_i\xi_j\right|}{\left|2\mathbf{i}\tau- (\xi \cdot \Gamma\xi )\right|}\leq \frac{2}{\gamma^2},
\end{align*}
then, for any family $F^{i,j}\in \Cc^\infty_c((0,{T})\times\er^d),\,1\leq i,j\leq d$, we have
$$
|\xi|^2\left|\Ff(\partial^2_{x_ix_j} G^{\Gamma,\delta})(\tau,\xi)\Ff(F^{i,j}\ind_{[0,T]})(\tau,\xi)\right|\leq \frac{2}{\gamma^2}|\Ff(F^{i,j}\ind_{[0,T]})|(\tau,\xi)
$$
from which we deduce, as previously, that
\begin{align*}
 \big\Vert \sum_{i,j=1}^d \int_0^{.}\partial^2_{x_ix_j} G^\Gamma_{.-s}(F^{i,j}(s))\,ds\big\Vert_{L^2((0,{T})\times\er^d)}
 &=\big\Vert \lim_{\delta\rightarrow 0^+}\sum_{i,j=1}^d \int_0^{.}\partial^2_{x_ix_j} G^{\Gamma,\delta}_{.-s}(F^{i,j}(s))\,ds\big\Vert_{L^2((0,{T})\times\er^d)}\\
 &\leq \frac{2}{\gamma^2}\sum_{i,j=1}^d \Vert F^{i,j} \Vert_{L^2((0,{T})\times\er^d)}.
\end{align*}



\begin{thebibliography}{99}
\bibitem{AbeTac-11}
Abergel, F. and Tachet, R.,
\newblock{\em A nonlinear partial integro-differential equation from mathematical Finance},
\newblock {Discrete and Continuous Dynamical Systems - Series A (DCDS-A)}, 27(3): 907-917,  2010.
%

\bibitem{Aronson-67}
Aronson, D. G.,
\newblock {\em Bounds for the fundamental solution of a parabolic equation},
\newblock {Bull. Amer. Math. Soc.}, 73: 890–896, 1967.
%

\bibitem{BoKrRoSh-15}
Bogachev, V. I., Krylov, N. V., R\"ockner, M., and Shaposhnikov, S.,
\newblock {\em Fokker-Planck-Kolmogorov Equations},
\newblock American Mathematical Society, 2015.
%

\bibitem{bossy-11}
Bossy, M., Jabir, J.-F. and Talay, D.,
\newblock {\em On conditional McKean Lagrangian stochastic models}.
\newblock Probability theory and related fields, 151(1-2): 319-351, 2011.
%

\bibitem{bossy-15}
Bossy, M. and  Jabir, J.-F.,
\newblock {\em Lagrangian stochastic models with specular boundary condition.} Journal of Functional Analysis, 268(6): 1309-1381, 2015.
%

\bibitem{bossy-16}
Bossy, M., Espina, J., Morice, J., Paris, C., and Rousseau, A.,
\newblock{\em Modeling the wind circulation around mills with a Lagrangian stochastic approach},
\newblock SMAI-Journal of Computational Mathematics, 2: 177-214, 2016.
%

\bibitem{bossy-17}
Bossy, M.,  Dupré, A., Drobinski, P., Violeau, L., and Briard, C.,
\newblock{\em Stochastic Lagrangian approach for wind farm simulation},
\newblock{2018. hal.inria.fr.}
%

\bibitem{ChaJab-17}
Champagnat, N., and  Jabin, P.-E.,
\newblock{\em Strong solutions to stochastic differential equations with rough coefficients},
\newblock {To appear in Annals of Probability}, 2018.
%

\bibitem{chorin-73}
Chorin A. J.,
\newblock{\em Numerical study of slightly viscous flows},
\newblock J. Fluid Mech., 57: 785-796, 1973.
%

\bibitem{DurSpe-94}
Durbin, P. A. and Speziale, C. G.,
\newblock{\em Realizability of second moment closure via stochastic analysis},
\newblock J. Fluid Mech. 280: 395–407, 1994.
%

\bibitem{Evans-97}
Evans, L. C.,
\newblock {\em Partial Differential Equations},
\newblock {American Mathematical Institute}, 1997.

\bibitem{Figalli-08}
Figalli, A.,
\newblock{\em Existence and uniqueness of martingale solutions for SDEs with rough or degenerate coefficients},
\newblock {Journal of Functional Analysis}, 254(1): 109-153, 2008.
%

\bibitem{Funaki-84}
Funaki, T.,
\newblock{\em A certain class of diffusion processes associated with nonlinear parabolic equations},
\newblock {Z. Wahrsch. Verw. Gebiete}, 67(3): 331-348, 1984.
%

\bibitem{GuyHeLab-11}
Guyon, J. and Henry-Labord\`ere, P.,
\newblock{\em The smile calibration problem solved},
\newblock {SSRN Electronic Journal}, 2011.
%

\bibitem{Gyongy-86}
Gy\"ongy, I.,
\newblock{\em Mimicking the one-dimensional marginal distributions of processes having an It\^o Differential},
\newblock {Probability Theory and Related Fields}, 71: 501-516, 1986.
%

\bibitem{JouMel-98}
Jourdain, B. and M{\'e}l{\'e}ard, S.,
\newblock {\em Propagation of chaos and fluctuations for a moderate model with smooth initial data},
\newblock {Ann. Inst. H. Poincar\'e Probab. Statist.}, 34(6): 726-766, 1998.
%

\bibitem{JouRey-13}
Jourdain, B. and Reygner, J.,
\newblock{\em Propagation of chaos for rank-based interacting diffusions and long time behaviour of a scalar quasilinear parabolic equation},
\newblock{Stoch. Partial Differ. Equ. Anal. Comput.}, 1(3): 455-506, 2013.
%

\bibitem{JouZho-17}
Jourdain, B. and Zhou, A.,
\newblock{\em Existence of a calibrated regime switching local volatility model and new fake Brownian motions},
\newblock{Preprint}, 2016.
%

\bibitem{FouJou-17}
Fournier, N., and  Jourdain, B.,
\newblock{\em Stochastic particle approximation of the Keller–Segel equation and two-dimensional generalization of Bessel processes},
\newblock The Annals of Applied Probability, 27(5): 2807-2861, 2017.
%

\bibitem{KohOga-97}
Kohatsu-Higa, A. and Ogawa, S.,
\newblock{\em Weak rate of convergence for an Euler scheme of nonlinear SDE's},
\newblock {Monte Carlo Methods and Applications}, 3(4): 327-345, 1997.
%

\bibitem{Krylov-08}
Krylov, N. V.,
\newblock{\em Lecture on Elliptic and Parabolic Equations in Sobolev Spaces},
\newblock American Mathematical Society, 2008.
%

\bibitem{LadSolUra-68}
Lady\v{z}enskaja, O. A.,  Solonnikov, V. A. and Ural\'ceva, N.,
\newblock{\em Linear and Quasi-linear Equations of Parabolic Type},
\newblock American Mathematical Society, coll. Translations of Mathematical Monographs, 1968.
%

\bibitem{Lions-61}
Lions, J.-L.,
\newblock{\em Equations Diff\'erentielles Opérationnelles et Probl\`emes aux Limites},
\newblock {Grundlehren der mathematischen Wissenschaften}, 1961.
%

\bibitem{Meleard-96}
M{\'e}l{\'e}ard, S.,
\newblock{\em Asymptotic behaviour of some interacting particle systems; {M}c{K}ean-{V}lasov and {B}oltzmann models},
\newblock {Probabilistic models for nonlinear partial differential equations ({M}ontecatini {T}erme, 1995)}, Lecture Notes in Math., 1627:42--95, 1996.
%

\bibitem{Meleard-01}
M{\'e}l{\'e}ard, S.,
\newblock{\em Monte-Carlo approximation for 2d Navier-Stokes equations with measure initial
data},
\newblock {Probab. Theory Relat. Fields} 121: 367-388 (2001)
%

\bibitem{MisVer-17}
Mishura, Y. S., and Veretennikov, A. Y.,
\newblock{\em {Existence and uniqueness theorems for solutions of McKean–Vlasov stochastic equations}}.
\newblock {Preprint}, 2017.
%

\bibitem{Oelschlager-84}
Oelschl{\"a}ger, K.,
\newblock{\em A martingale approach to the law of large numbers for weakly interacting stochastic processes},
\newblock{The Annals of Probability}, 12(2): 458-497, 1984.
%

\bibitem{Oelschlager-85}
Oelschl{\"a}ger, K.,
\newblock{\em A law of large numbers for moderately interacting diffusion processes},
\newblock{Zeitschrift für Wahrscheinlichkeitstheorie und verwandte Gebiete 69, no. 2: 279-322, 1985.}
%

\bibitem{pope-94}
Pope, S. B.,
\newblock{\em Lagrangian pdf methods for turbulent flows},
\newblock Annu. Rev. Fluid Mech., 26: 23–63, 1994.
%

\bibitem{pope-11}
Pope, S. B.,
\newblock {\em Turbulent Flows},
\newblock Cambridge University Press, 11th edition, 2011.
%

\bibitem{StrVar-79}
Stroock, D. and Varadhan, S. R.,
\newblock{\em Multidimensional Diffusion Processes},
\newblock{Springer-Verlag}, 1979.
%

\bibitem{Sznitman-86}
Sznitman, A. S.,
\newblock{\em A propagation of chaos result for Burgers' equation},
\newblock Probability Theory and Related Fields, 71, (4):581–613, 1986.
%

\bibitem{Sznitman-89}
Sznitman, A. S.,
\newblock {\em Topics in Propagation of Chaos},
\newblock École d'Été de Probabilités de Saint-Flour XIX—1989, p: 165–251, Lecture Notes in Math., 1464, Springer, 1989.
%

\bibitem{Vasquez-06}
Vasquez, J. L.,
\newblock{\em The Porous Medium Equation},
\newblock{Oxford University Publications}, 2006.
%

\bibitem{Veretennikov-80}
Veretennikov, A., Yu.,
\newblock {\em On strong solutions and explicit formulas for
solutions of stochastic integral equations},
\newblock {Mat. Sb. (N.S.)}, 111(3): 434–452, 1980.
%

\end{thebibliography}
\end{document}